\newcommand{\eremk}{\hbox{}\hfill\rule{0.8ex}{0.8ex}}
\definecolor{myblue}{HTML}{1f77b4}
\definecolor{myorange}{HTML}{ff4c0e}
\definecolor{myyellow}{HTML}{bcbd22}
\definecolor{mypurple}{HTML}{9467bd}
\definecolor{mygreen}{HTML}{2ca02c}
\newtheorem{theorem}{Theorem}[section]
\newtheorem{assumption}[theorem]{Assumption}
\newtheorem{lemma}[theorem]{Lemma}
\newtheorem{proposition}[theorem]{Proposition}
\newtheorem{corollary}[theorem]{Corollary}
\newtheorem{remark}[theorem]{Remark}
\DeclareFontFamily{U}{matha}{\hyphenchar\font45}
\DeclareFontShape{U}{matha}{m}{n}{
<-6> matha5 <6-7> matha6 <7-8> matha7
<8-9> matha8 <9-10> matha9
<10-12> matha10 <12-> matha12
}{}
\DeclareSymbolFont{matha}{U}{matha}{m}{n}
\DeclareFontFamily{U}{mathx}{\hyphenchar\font45}
\DeclareFontShape{U}{mathx}{m}{n}{
<-6> mathx5 <6-7> mathx6 <7-8> mathx7
<8-9> mathx8 <9-10> mathx9
<10-12> mathx10 <12-> mathx12
}{}
\DeclareSymbolFont{mathx}{U}{mathx}{m}{n}
\DeclareMathDelimiter{\vvvert} {0}{matha}{"7E}{mathx}{"17}%
\DeclarePairedDelimiterX{\normiii}[1]
{\vvvert}
{\vvvert}
{\ifblank{#1}{\:\cdot\:}{#1}}
\renewcommand{\phi}{\varphi}
\DeclareMathOperator{\Id}{Id}
\newcommand{\bx}{\boldsymbol{x}}
\newcommand{\R}{\mathbb{R}}
\newcommand{\N}{\mathbb{N}}
\newcommand{\A}{\mathcal{A}}
\newcommand{\bh}{\boldsymbol{h}}
\newcommand{\Pt}{\Pi_{h_t}^{\partial_t}}
\newcommand{\Pg}{\Pi^{\nabla}_{h_{\bx}}}
\newcommand{\Px}{\Pi^0_{h_{\bx}}}
\newcommand{\NOmega}{\mathcal{N}^\Omega}
\newcommand{\NOmegae}{\mathcal{N}^\Omega_e}
\newcommand{\NOmegahx}{\mathcal{N}^\Omega_{h_{\bx}}}
\newcommand{\NOmegaeh}{\mathcal{N}^\Omega_{e,\bh}}
\title{Inf-sup stable space--time discretization of the wave equation based on a first-order-in-time variational formulation}
\author{$^a$Matteo~Ferrari, $^a$Ilaria~Perugia, $^a$Enrico~Zampa \vspace{0.5cm}}
\date{
$^a$Faculty of Mathematics, University of Vienna\\
Oskar-Morgenstern-Platz 1, 1090 Vienna, Austria
}
\begin{document}
\maketitle

\begin{abstract}
\noindent
In this paper, we present a conforming space--time discretization of the wave equation based on a first-order-in-time variational formulation with exponential weights in time. We analyze the method, showing its stability without imposing any restrictions on the mesh size or time step, and proving quasi-optimal convergence for any choice of space--time tensor product discrete spaces that satisfies standard approximation assumptions. Numerical examples are provided to support the theoretical findings.
\end{abstract}


\section{Introduction}

The need for accurate wave simulation has led to major advances in theory and numerical methods. In recent years, an increasing number of \emph{conforming} space–time methods have been proposed and are actively being developed. In this context, two key properties properties are typically sought: unconditional stability and quasi-optimal convergence. Here is a brief overview of these methods.\medskip

\begin{itemize}[leftmargin=2em]
\item[1)] {\bf Second-order-in-time methods}, in which the wave field is the sole unknown:
\begin{itemize}[leftmargin=2em]
    \item[1.a)] with double integration by parts in space and time (ultra-weak formulation)
\begin{itemize}[leftmargin=1.5em]
    \item[$\circ$] \cite{HenningPalittaSimonciniUrban2022}: Petrov–Galerkin methods with trial spaces selected to ensure optimal inf-sup stability, following a DPG approach \cite{DemkowiczGopalakrishnanNagarajSepulveda2017}; $H^2$-regularity in both space and time is required for the test functions;
    \end{itemize}
    \item[1.b)] with single integration by parts in space and time (weak formulation)
    \begin{itemize}[leftmargin=1.5em]
    \item[$\circ$] \cite{SteinbachZank2019, Zank2021}: Galerkin methods employing continuous piecewise polynomials in time, stabilized by inserting a projection in time into discontinuous polynomials of one degree lower within the grad-grad term;
    \item[$\circ$] \cite{FraschiniLoliMoiolaSangalli2023, FerrariFraschini2024}: Galerkin methods employing maximal regularity splines in time, stabilized through the addition of a non-consistent penalty term;
    \item[$\circ$] \cite{BignardiMoiola2023}: coercive Galerkin methods, in which Morawetz multipliers are employed as test functions; $H^2$-regularity in both space and time is required for the trial and test functions;
    \item[$\circ$] \cite{LoscherSteinbachZank2023, FerrariLoscherZank2025}: a Galerkin method with continuous piecewise linear functions in time, modified by a transformation operator known as modified Hilbert transform (see \cite{SteinbachZank2020}) to guarantee stability;
    \item[$\circ$] 
\cite{KotheLoscherSteinbach2023}: a least-squares method with continuous piecewise linear functions on unstructured meshes;
    \end{itemize}
    \item[1.c)] with integration by parts in space but not in time (weak formulation in space)
    \begin{itemize}[leftmargin=1.5em]
    \item[$\circ$] \cite{French1993}: time-stepping-like schemes (discontinuous test functions in time) employing continuous piecewise polynomials for the trial functions and exponential weights in time;
    \item[$\circ$] \cite{Walkington2014, DongMascottoWang2024}: time-stepping-like schemes employing continuous piecewise polynomials for the trial functions;
    \item[$\circ$] \cite{FerrariPerugia2025}: Galerkin methods with splines of arbitrary degree and exponential weights in time, exhibiting quasi-optimal convergence when the difference between polynomial degree and spline regularity is odd.
    \end{itemize}
\end{itemize} 
\item[2)] {\bf First-order-in-time methods}, in which the unknowns are the wave field and its velocity: 
\begin{itemize}[leftmargin=2em]
\item[$\circ$] \cite{BalesLasiecka1994, FrenchPeterson1996, Gomez2025}: time-stepping-like schemes employing continuous piecewise polynomials in time for the trial functions;
\item[$\circ$] \cite{FerrariFraschiniLoliPerugia2024}: Galerkin methods employing maximal regularity splines in time for the trial functions, and their derivative space for the test functions;
\item[$\circ$] \cite{FuhrerGonzalezKarkulik2025}: first-order least-squares methods in both time and space;
\item[$\circ$] \cite{AnselmannBause2020,AnselmannBauseBecherMatthies2020}: Petrov-Galerkin collocation methods in time employing arbitrary regularity splines as trial functions.
\end{itemize}
\end{itemize}
\medskip

\noindent
In several of the above references, discrete space--time tensor product spaces are employed. As this brief overview shows, regularity assumptions in space or time are often required, and unconditional stability (or convergence) is often proven for a particular type of time-discrete spaces (e.g. polynomials or maximal regularity splines).
\vspace{-3pt}

\noindent
In this work, we consider conforming first-order-in-time space--time methods with exponential weights in time, where the discrete tensor product spaces are kept general, and both stability and convergence are guaranteed under standard approximation assumptions. The core idea is to exploit the exponential weights similarly to~\cite{Walkington2014}, and establish an inf-sup condition for the continuous variational formulation in suitable norms, 
under the assumption that the test functions are time derivatives of the trial functions. A slightly stronger solution regularity than the minimal one is required, as, e.g., in~\cite{BignardiMoiola2023, FuhrerGonzalezKarkulik2025, FerrariPerugia2025}. While in~\cite{Walkington2014} the exponential weights are used only in the analysis, we keep them in the definition of the numerical method, as in~\cite{French1993} and then in~\cite{FerrariPerugia2025}.
The stability analysis of the \emph{discrete} formulation is a straightforward extension of that of the continuous formulation, based on the same assumption that the test functions are time derivatives of the trial functions, allowing flexibility in the choice of the (conforming) trial space.

\noindent
The analysis we propose shares several ideas with those in \cite{Gomez2025} and \cite{FerrariPerugia2025}. The main difference with respect to~\cite{Gomez2025}, which considers the same first-order-in-time formulation but without exponential weights, lies in the stability at the continuous level and in the greater generality of our framework, which allows for the use of any discrete space satisfying standard approximation assumptions. In contrast, the difference with respect to the method in~\cite{FerrariPerugia2025}, which also employs exponential weights but within a second-order-in-time formulation, lies in the derivation of quasi-optimal error estimates for more general choices of the discrete spaces in time. While the method in \cite{FerrariPerugia2025} is coercive, quasi-optimality in time there is guaranteed only under specific conditions. As a final note, we emphasize that our inf-sup analysis makes use of a Newton potential and discrete Newton potential operators, similarly to the inf-sup analysis of space--time methods for parabolic problems developed in~\cite{Steinbach2015}. Although the problem considered here is of hyperbolic nature, the introduction of exponential weights allows us to exploit common features with the parabolic case studied in~\cite{Steinbach2015}.
\medskip

\noindent
The paper is organized as follows: in Section~\ref{sec:2}, we introduce the model problem and the regularity assumptions that are used throughout the paper. In Section~\ref{sec:4}, we define and analyze the exponentially-weighted variational formulation of the space--time problem, proving its inf-sup stability and well-posedness.
In Section~\ref{sec:5}, we study  space--time tensor product conforming discretizations, proving unconditional stability and quasi-optimal error bounds in both the energy and $L^2$ norms, under standard approximation assumptions on the discrete spaces. Numerical experiments are presented in Section~\ref{sec:5} to validate the theoretical findings.

\section{Model problem} \label{sec:2}

Let $\Omega$ be a Lipschitz, bounded polytopal domain in $\R^d$ ($d = 1,2,3$), $T$ a positive final time, and~$Q_T := \Omega \times (0,T)$ the space--time cylinder.

\noindent
Given initial data~$U_0 \in H_0^1(\Omega)$, $V_0 \in L^2(\Omega)$ and a forcing term~$F \in L^2(Q_T)$, consider the following problem: find $U :\overline{Q}_T \to \mathbb{R}$ such that
\begin{equation} \label{eq:1}
\begin{cases}
    \partial_t^2 U(\bx,t) - \nabla_{\bx}\cdot(c^2(\bx)\nabla_{\bx} U(\bx,t)) = F(\bx,t) & (\bx,t) \in Q_T, \\
    U(\bx,t) = 0 & (\bx,t) \in  \partial\Omega \times (0,T], \\
    U(\bx,0) = U_0(\bx), \quad \partial_t U(\bx,0) = V_0(\bx) & \bx \in \Omega.
\end{cases}
\end{equation}
Here, $c = c(\bx)$ is the wave velocity, for which we assume, for a constant~$c_0$,
\begin{equation}\label{eq:2}
    c \in L^\infty(\Omega) \quad \text{and} \quad c(\bx) \ge c_0 > 0 \quad \text{for almost every~} \bx \in \Omega.
\end{equation}
A well-posed variational formulation of problem~\eqref{eq:1} reads: find 
\begin{equation} \label{eq:3}
    U \in L^2(0,T; H_0^1(\Omega)) \cap H^1(0,T; L^2(\Omega))
\end{equation}
such that $U_{|_{t=0}} = U_0$ in $H^1_0(\Omega)$, $\partial_t U_{|_{t=0}} = V_0$ in $L^2(\Omega)$ and, for almost every $t \in (0,T)$,
\begin{equation} \label{eq:4}
    \langle \partial_t^2 U(\cdot,t), W \rangle_{H^1_0(\Omega)} + (c^2\nabla_{\bx} U(\cdot,t), \nabla_{\bx} W)_{L^2(\Omega)} = (F(\cdot,t), W )_{L^2(\Omega)}
\end{equation}
for all $W \in H_0^1(\Omega)$, where $\langle\cdot,\cdot\rangle_{H^1_0(\Omega)}$ is the duality product between~$[H_0^1(\Omega)]'$ and~$H^1_0(\Omega)$; see, e.g.,~\cite[Chapter 3. \S 8 Pag. 265]{LionsMagenes1972}.

\noindent
Introducing the new unknown $V := \partial_t U$, we rewrite problem~\eqref{eq:1} as the first-order-in-time system
\begin{equation} \label{eq:5}
\begin{cases}
    \partial_t V(\bx,t) - \nabla_{\bx}\cdot(c^2(\bx)\nabla_{\bx} U(\bx,t)) = F(\bx,t) & (\bx,t) \in Q_T,
    \\ \partial_t U(\bx,t) - V(\bx,t) = 0 & (\bx,t) \in Q_T, 
    \\ U(\bx,t) = 0 & (\bx,t) \in  \partial\Omega \times (0,T),
    \\ U(\bx,0) = U_0(\bx), \quad V(\bx,0) = V_0(\bx) & \bx \in \Omega.
\end{cases}
\end{equation}
If $U$ satisfies the regularity in~\eqref{eq:3}, then
\begin{equation*}
    V \in L^2(Q_T) \cap H^1(0,T; [H_0^1(\Omega)]').
\end{equation*}
For the purposes of the scheme we propose, we restrict our setting to a more regular framework and make the following assumption.
\begin{assumption} \label{ass:21}
Let the wave velocity $c$ be as in~\eqref{eq:2}, $U_0 \in H_0^1(\Omega)$, and
\begin{equation*}
    F \in H^1(0,T;L^2(\Omega)), \quad V_0 \in H^1_0(\Omega), \quad F(\cdot,0) \in L^2(\Omega), \quad \nabla_{\bx} \cdot (c^2 \nabla_{\bx} U_0) \in L^2(\Omega). 
\end{equation*}
\end{assumption}
\noindent
Under Assumption~\ref{ass:21}, one can establish improved regularity for the unique solution to~\eqref{eq:4}; see, e.g.,~\cite[Prop.~4.4]{AbdulleHenning2017}, \cite[Lemma~2.7]{FuhrerGonzalezKarkulik2025}, and \cite[Chapter~7.2]{EvansBook}.
\begin{proposition}\label{prop:22}
Under Assumption~\ref{ass:21}, there exists a unique solution to~\eqref{eq:4} and it satisfies
\begin{equation*}
    U \in L^2(0,T; H_0^1(\Omega)), \quad 
    \partial_t U \in L^\infty(0,T; H_0^1(\Omega)), \quad 
    \partial_t^2 U \in L^\infty(0,T; L^2(\Omega)).
\end{equation*}
Setting \( V = \partial_t U \), this in particular implies
\begin{equation*}
    U \in H^1(0,T; H_0^1(\Omega)), \quad 
    V \in L^2(0,T; H_0^1(\Omega)) \cap H^1(0,T; L^2(\Omega)).
\end{equation*}
\end{proposition}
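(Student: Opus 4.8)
The plan is to prove this by the classical Galerkin-plus-energy-estimate argument for second-order hyperbolic equations (as in~\cite[Chapter~7.2]{EvansBook}), the point being that Assumption~\ref{ass:21} is exactly what is needed to push the standard energy estimate one regularity level higher. First I would fix an $L^2(\Omega)$-orthonormal basis $\{w_k\}_{k\in\N}$ of eigenfunctions of the operator $W\mapsto -\nabla_{\bx}\cdot(c^2\nabla_{\bx} W)$ with homogeneous Dirichlet conditions, which is simultaneously orthogonal for the energy inner product $a(U,W):=(c^2\nabla_{\bx} U,\nabla_{\bx} W)_{L^2(\Omega)}$. I would seek Galerkin approximations $U_m(t)=\sum_{k=1}^m d_k^m(t)\,w_k$ solving the finite-dimensional version of~\eqref{eq:4} obtained by testing against $w_1,\dots,w_m$, with initial data $U_m(0)$ and $\partial_t U_m(0)$ taken as the energy-orthogonal projections of $U_0$ and $V_0$ onto $\mathrm{span}\{w_1,\dots,w_m\}$. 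Since $F\in H^1(0,T;L^2(\Omega))$, the coefficients $d_k^m$ solve linear second-order ODEs with $H^1$-in-time data and are therefore smooth enough to be differentiated once more in time.

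Next I would carry out two energy estimates. The classical one, obtained by testing the Galerkin system with $\partial_t U_m$, gives a uniform bound on $\|\partial_t U_m\|_{L^\infty(0,T;L^2(\Omega))}$ and $\|U_m\|_{L^\infty(0,T;H_0^1(\Omega))}$, yielding existence, uniqueness and the base regularity~\eqref{eq:3}. The crucial second estimate exploits the structure $V=\partial_t U$: differentiating the Galerkin identity in time shows that $\tilde U_m:=\partial_t U_m$ solves the \emph{same} system with source $\partial_t F$, so testing with $\partial_t\tilde U_m=\partial_t^2 U_m$ and repeating the energy argument produces
\[
\|\partial_t^2 U_m\|_{L^\infty(0,T;L^2(\Omega))}^2+\|\partial_t U_m\|_{L^\infty(0,T;H_0^1(\Omega))}^2 \lesssim \|\partial_t F\|_{L^2(Q_T)}^2 + \|\partial_t U_m(0)\|_{H_0^1(\Omega)}^2 + \|\partial_t^2 U_m(0)\|_{L^2(\Omega)}^2 .
\]
Passing to the limit $m\to\infty$, using weak-$*$ lower semicontinuity of the norms, then delivers the three claimed regularities for $U$. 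The statements for $V=\partial_t U$ follow at once, since on the bounded interval $(0,T)$ one has $L^\infty(0,T;X)\hookrightarrow L^2(0,T;X)$: indeed $\partial_t U\in L^\infty(0,T;H_0^1(\Omega))$ gives $U\in H^1(0,T;H_0^1(\Omega))$ and $V\in L^2(0,T;H_0^1(\Omega))$, while $\partial_t^2 U\in L^\infty(0,T;L^2(\Omega))$ gives $V\in H^1(0,T;L^2(\Omega))$.

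The main obstacle is to bound the right-hand side of the second estimate uniformly in $m$, and this is exactly where Assumption~\ref{ass:21} enters. The term $\|\partial_t U_m(0)\|_{H_0^1(\Omega)}$ is controlled because $V_0\in H_0^1(\Omega)$ and the energy-orthogonal projection onto the eigenbasis is $H_0^1$-stable. The delicate quantity is $\partial_t^2 U_m(0)$, which is not prescribed and must be recovered from the equation at $t=0$: evaluating the Galerkin identity there gives $(\partial_t^2 U_m(0),w_k)=(F(\cdot,0),w_k)-a(U_m(0),w_k)$, and integration by parts in the elliptic term—legitimate precisely because $\nabla_{\bx}\cdot(c^2\nabla_{\bx} U_0)\in L^2(\Omega)$ and $w_k\in H_0^1(\Omega)$—identifies $\partial_t^2 U_m(0)$ as the $L^2(\Omega)$-projection of $F(\cdot,0)+\nabla_{\bx}\cdot(c^2\nabla_{\bx} U_0)$, whence $\|\partial_t^2 U_m(0)\|_{L^2(\Omega)}\le\|F(\cdot,0)+\nabla_{\bx}\cdot(c^2\nabla_{\bx} U_0)\|_{L^2(\Omega)}$. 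The hypotheses $F(\cdot,0)\in L^2(\Omega)$ and $F\in H^1(0,T;L^2(\Omega))$ then make both this bound and $\|\partial_t F\|_{L^2(Q_T)}$ finite, closing the argument. Since the result is essentially classical, one could alternatively quote it directly from~\cite[Prop.~4.4]{AbdulleHenning2017}, \cite[Lemma~2.7]{FuhrerGonzalezKarkulik2025} or~\cite[Chapter~7.2]{EvansBook} and omit the full re-derivation.
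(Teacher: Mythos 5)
Your argument is correct and is precisely the classical Galerkin-plus-improved-energy-estimate proof that the paper itself invokes by citing \cite[Chapter~7.2]{EvansBook}, \cite[Prop.~4.4]{AbdulleHenning2017} and \cite[Lemma~2.7]{FuhrerGonzalezKarkulik2025} instead of writing it out; in particular, your identification of $\partial_t^2 U_m(0)$ as the $L^2$-projection of $F(\cdot,0)+\nabla_{\bx}\cdot(c^2\nabla_{\bx}U_0)$ is exactly where Assumption~\ref{ass:21} is used in those references. No discrepancy with the paper's (cited) proof.
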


\section{Space--time variational formulation}\label{sec:4}

In this section, we introduce and analyze a variational formulation of~\eqref{eq:5}. We establish an inf-sup condition in a way that extends straightforwardly to the discrete setting. 
\medskip

\noindent
We set $H_{0,\bullet}^1(0,T) := \{u \in H^1(0,T) : u(0)=0\},$ and consider the following variational formulation: find $U \in H^1_{0,\bullet}(0,T;H^1_0(\Omega))$ and $V \in H^1_{0,\bullet}(0,T;L^2(\Omega))$ such that
\begin{equation} \label{eq:22}
\begin{cases}
    (\partial_t V, \lambda)_{L^2(Q_T)} + (c^2 \nabla_{\bx} U, \nabla_{\bx} \lambda)_{L^2(Q_T)} = (F, \lambda )_{L^2(Q_T)} - (c^2\nabla_{\bx} U_0, \nabla_{\bx} \lambda)_{L^2(Q_T)}, 
    \\ (\partial_t U, \chi)_{L^2(Q_T)} - (V, \chi)_{L^2(Q_T)} = (V_0,\chi)_{L^2(Q_T)},
\end{cases}    
\end{equation}
for all $\lambda \in L^2(0,T;H_0^1(\Omega))$ and $\chi \in L^2(Q_T)$. If $(U,V)$ is a solution to~\eqref{eq:5}, then $(U - U_0,\, V - V_0)$ satisfies~\eqref{eq:22}.

\noindent
We now introduce a modified variational formulation that incorporates weighted scalar products in time, and we study the properties of associated bilinear form.

\subsection{Weighted space--time bilinear form and its properties}

For functions depending solely on time, we define the weighted scalar product in $L^2(0,T)$ and associated norm
\begin{equation} \label{eq:8}
    (u,v)_{L^2_e(0,T)} := \int_0^T u(s) v(s) e^{-s/T} \dd s, \quad\quad\quad \| u \|_{L^2_e(0,T)}^2 := (u,u)_{L^2_e(0,T)}.
\end{equation}
The norm defined in \eqref{eq:8} is equivalent to the classical $L^2$ norm.
The important property of this scalar product is that, for all~$u, v \in H^1(0,T)$,
\begin{equation*}
    (u, \partial_t v)_{L^2_e(0,T)} = -(\partial_t u, v)_{L^2_e(0,T)} + \frac{1}{T} (u,v)_{L^2_e(0,T)} + \frac{1}{e} u(T) v(T)- u(0) v(0),
\end{equation*}
from which it follows that
\begin{equation} \label{eq:9}
    (w, \partial_t w)_{L^2_e(0,T)}  = \frac{1}{2T} \| w \|_{L^2_e(0,T)}^2 + \frac{1}{2e} |w(T)|^2 \qquad \text{for all~} w \in H_{0,\bullet}^1(0,T). 
\end{equation}
Then, for space--time functions, we denote by~$(\cdot,\cdot)_{L^2_e(Q_T)} : L^2(Q_T) \times L^2(Q_T) \to \R$ the weighted scalar product
\begin{equation*}
    ( \chi, \lambda )_{L^2_e(Q_T)} := \int_0^T ( \chi(\cdot,s), \lambda(\cdot,s) )_{L^2(\Omega)} \, e^{-s/T} \, \dd s,
\end{equation*}
and by  $\langle \cdot, \cdot \rangle_e : L^2(0,T;[H^1_0(\Omega)]') \times L^2(0,T;H^1_0(\Omega)) \to \R$ the duality pairing
\begin{equation*}
    \langle \chi, \lambda \rangle_e := \int_0^T \langle \chi(\cdot,s), \lambda(\cdot,s) \rangle_{H^1_0(\Omega)} \, e^{-s/T} \, \dd s.
\end{equation*}
We define the space--time bilinear form $\A$ as 
\begin{equation} \label{eq:23}
\begin{aligned}
    \A((U,V),(\lambda,\chi)) := & \langle\partial_t V, \lambda \rangle_e + (c^2\nabla_{\bx} U, \nabla_{\bx} \lambda)_{L^2_e(Q_T)}
    \\ & \quad - (\partial_t U, \chi)_{L^2_e(Q_T)} + (V, \chi)_{L^2_e(Q_T)}.
\end{aligned}    
\end{equation}
The \emph{minimal} regularity required for $\mathcal{A}$ to be well-defined is
\begin{equation} \label{eq:24}
\begin{aligned}
    (U,V) & \in \bigl(L^2(0,T; H_0^1(\Omega)) \cap H^1(0,T; L^2(\Omega))\bigr) \times \bigl(L^2(Q_T) \cap H^1(0,T; [H_0^1(\Omega)]')\bigr), 
    \\ (\lambda,\chi) & \in L^2(0,T;H_0^1(\Omega)) \times L^2(Q_T).
\end{aligned}
\end{equation}
\noindent
As in the analysis of the space--time parabolic problem in~\cite{Steinbach2015}, we introduce the \textit{Newton potential} operator $\NOmega : [H_0^1(\Omega)]' \to H_0^1(\Omega)$ defined by 
\begin{equation} \label{eq:25}
    (c^2 \nabla_{\bx} \NOmega U, \nabla_{\bx} V)_{L^2(\Omega)} = \langle U,V \rangle_{H^1_0(\Omega)} \quad \text{for all~} V \in H_0^1(\Omega).
\end{equation}
From Lax-Milgram's lemma,~$\NOmega$ is well-defined. Moreover, $\NOmega$ is an invertible operator and induces norm in~$[H^1_0(\Omega)]'$, which is equivalent to the standard one:
\begin{equation} \label{eq:26}
    \| U \|_{\NOmega}^2 := \langle U, \NOmega U \rangle_{H_0^1(\Omega)} = (c^2 \nabla_{\bx} \NOmega U, \nabla_{\bx} \NOmega U)_{L^2(\Omega)} = \| c \nabla_{\bx} \NOmega U \|^2_{L^2(\Omega)}.
\end{equation}
We have the following lemma.
\begin{lemma} \label{lem:41}
For all~$U \in L^2(\Omega)$,
\begin{equation*}
    \| c \nabla_{\bx} \NOmega U\|_{L^2(\Omega)} \le \frac{C_\Omega}{c_0} \|U\|_{L^2(\Omega)},
\end{equation*}
where~$C_\Omega>0$ is the constant in the Poincar\'e inequality
\begin{equation*}
    \|V\|_{L^2(\Omega)}\le C_\Omega\|\nabla_{\bx} V\|_{L^2(\Omega)} \qquad \text{for all }\, V\in H^1_0(\Omega),
\end{equation*}
and~$c_0$ the lower bound of~$c$ in~\eqref{eq:2}.
\end{lemma}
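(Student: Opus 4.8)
The plan is to evaluate the squared quantity on the left by testing the defining relation~\eqref{eq:25} against its own argument, and then to peel off one power via Cauchy--Schwarz, Poincar\'e, and the lower bound on~$c$. The starting observation is that for $U\in L^2(\Omega)\subset [H_0^1(\Omega)]'$, the duality pairing in~\eqref{eq:25} reduces to an ordinary $L^2$ inner product, i.e. $\langle U,V\rangle_{H^1_0(\Omega)}=(U,V)_{L^2(\Omega)}$ for every $V\in H_0^1(\Omega)$. Since $\NOmega U\in H_0^1(\Omega)$ is an admissible test function, choosing $V=\NOmega U$ in~\eqref{eq:25} gives, using the identity recorded in~\eqref{eq:26},
\begin{equation*}
    \| c \nabla_{\bx} \NOmega U\|_{L^2(\Omega)}^2
    = (c^2 \nabla_{\bx} \NOmega U, \nabla_{\bx} \NOmega U)_{L^2(\Omega)}
    = (U, \NOmega U)_{L^2(\Omega)}.
\end{equation*}

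From here I would apply the Cauchy--Schwarz inequality to the right-hand side to obtain $(U,\NOmega U)_{L^2(\Omega)}\le \|U\|_{L^2(\Omega)}\,\|\NOmega U\|_{L^2(\Omega)}$, and then bound the second factor using the Poincar\'e inequality stated in the lemma, namely $\|\NOmega U\|_{L^2(\Omega)}\le C_\Omega\|\nabla_{\bx}\NOmega U\|_{L^2(\Omega)}$, which is legitimate precisely because $\NOmega U\in H_0^1(\Omega)$. Finally, the pointwise lower bound $c(\bx)\ge c_0$ from~\eqref{eq:2} yields $\|\nabla_{\bx}\NOmega U\|_{L^2(\Omega)}\le \tfrac{1}{c_0}\|c\nabla_{\bx}\NOmega U\|_{L^2(\Omega)}$, since $\int_\Omega|\nabla_{\bx}\NOmega U|^2\le c_0^{-2}\int_\Omega c^2|\nabla_{\bx}\NOmega U|^2$.

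Chaining these three estimates produces
\begin{equation*}
    \| c \nabla_{\bx} \NOmega U\|_{L^2(\Omega)}^2
    \le \|U\|_{L^2(\Omega)}\,\frac{C_\Omega}{c_0}\,\| c \nabla_{\bx} \NOmega U\|_{L^2(\Omega)},
\end{equation*}
and dividing through by $\| c \nabla_{\bx} \NOmega U\|_{L^2(\Omega)}$ gives the claimed bound. The only minor care needed is the degenerate case $\| c \nabla_{\bx} \NOmega U\|_{L^2(\Omega)}=0$, for which the asserted inequality holds trivially, so the division is harmless. There is no real obstacle here: the statement is essentially a continuity (boundedness) estimate for $\NOmega$ as a map into the energy norm, and the whole difficulty is the bookkeeping of identifying the $[H_0^1(\Omega)]'$ pairing with the $L^2$ product on the subspace $L^2(\Omega)$, which is what makes the Cauchy--Schwarz step available. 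Everything else is the standard Lax--Milgram-style self-testing argument combined with the norm equivalences already established in~\eqref{eq:26}.
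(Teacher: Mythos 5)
Your proof is correct and follows essentially the same route as the paper's: test \eqref{eq:25} with $V=\NOmega U$, apply Cauchy--Schwarz, the Poincar\'e inequality, and the lower bound $c\ge c_0$, then divide by $\|c\nabla_{\bx}\NOmega U\|_{L^2(\Omega)}$ (handling the degenerate case trivially). The only cosmetic difference is that the paper factors the argument through the intermediate bound $\|c\nabla_{\bx}\NOmega U\|_{L^2(\Omega)}\le c_0^{-1}\|U\|_{[H^1_0(\Omega)]'}$ before estimating the dual norm by $C_\Omega\|U\|_{L^2(\Omega)}$, whereas you chain the same three inequalities directly.
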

\begin{proof}
Let us endow~$H^1_0(\Omega)$ with the norm~$\|\nabla_{\bx}\cdot\|_{L^2(\Omega)}$. By the definition of~$\NOmega$ and~$\|\cdot\|_{\NOmega}$, we have
\begin{align*}
    \| c \nabla_{\bx} \NOmega U\|_{L^2(\Omega)}^2
    =\langle U,\NOmega U\rangle_{H^1_0(\Omega)}
     \le \|U\|_{[H^1_0(\Omega)]'}\|\nabla_{\bx} \NOmega U\|_{L^2(\Omega)},
\end{align*}
and therefore
\begin{equation*}
    \| c \nabla_{\bx} \NOmega U\|_{L^2(\Omega)} \le \frac{1}{c_0} \|U\|_{[H^1_0(\Omega)]'}.
\end{equation*}
The result follows from
\begin{equation*}
    \|U\|_{[H^1_0(\Omega)]'}:=\sup_{0\ne V\in H^1_0(\Omega)}\frac{\langle U,V\rangle_{H^1_0(\Omega)}}{\|\nabla_{\bx} V\|_{L^2(\Omega)}}=\sup_{0\ne V\in H^1_0(\Omega)}\frac{(U,V)_{L^2(\Omega)}}{\|\nabla_{\bx} V\|_{L^2(\Omega)}}\le C_\Omega \|U\|_{L^2(\Omega)},
\end{equation*}
where, in the last step, we applied the Cauchy-Schwarz and Poincar\'e inequalities.
\end{proof}
\noindent
Consider now functions depending on space and time. For $U \in L^2(0,T;[H_0^1(\Omega)]')$, we use the notation~$\NOmega U \in L^2(0,T;H_0^1(\Omega))$ to also denote the function defined via
\begin{equation*}
    (c^2 \nabla_{\bx} \NOmega U, \nabla_{\bx} V)_{L^2_e(Q_T)} = \int_0^T \langle U(\cdot,s), V(\cdot,s) \rangle_{H_0^1(\Omega)} \, e^{-s/T} \dd s
\end{equation*}
for all $V \in L^2(0,T;H_0^1(\Omega))$. Moreover, we define for $U \in L^2(0,T;[H_0^1(\Omega)]')$ the norm
\begin{equation}\label{eq:27}
\begin{aligned}
    \| U \|_{\NOmegae}^2 := \int_0^T \|U(\cdot,s)\|^2_{\NOmega} \, e^{-s/T} \, \dd s & = \int_0^T \|c\nabla_{\bx} \NOmega U(\cdot,s)\|^2_{L^2(\Omega)} \, e^{-s/T} \, \dd s
    \\ & =\|c\nabla_{\bx} \NOmega U\|_{L^2_e(Q_T)}^2.
\end{aligned}    
\end{equation}
\noindent
Using Lemma~\ref{lem:41}, we prove the following auxiliary result.
\begin{lemma} \label{lem:42}
For all $U \in L^2(Q_T)$ and $ V \in L^2(0,T; [H_0^1(\Omega)]')$, we have
\begin{equation*}
    \| U \|_{\NOmegae} \leq \frac{C_\Omega}{c_0} \| U \|_{L^2_e(Q_T)}\,, \quad \| \NOmega V \|_{L^2_e(Q_T)} \leq \frac{C_\Omega}{c_0} \| V \|_{\NOmegae},
\end{equation*}
where~$C_\Omega>0$ is the constant in the Poincar\'e inequality, and~$c_0$ the constant in~\eqref{eq:2}.
\end{lemma}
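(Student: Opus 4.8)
The plan is to reduce both bounds to pointwise-in-time estimates that hold for almost every $s \in (0,T)$, and then integrate them against the exponential weight $e^{-s/T}$. This works because both the $\NOmegae$-norm in~\eqref{eq:27} and the weighted $L^2_e(Q_T)$-norm are, by construction, time integrals of the corresponding spatial norms of the slice evaluated at $s$.

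For the first inequality, I would fix $s \in (0,T)$ and regard $U(\cdot,s) \in L^2(\Omega)$. Lemma~\ref{lem:41} then gives directly $\|c\nabla_{\bx}\NOmega U(\cdot,s)\|_{L^2(\Omega)} \le \tfrac{C_\Omega}{c_0}\|U(\cdot,s)\|_{L^2(\Omega)}$. Squaring this inequality, multiplying by $e^{-s/T}$, and integrating over $(0,T)$, the left-hand side becomes $\|U\|_{\NOmegae}^2$ by the identity in~\eqref{eq:27}, while the right-hand side becomes $\tfrac{C_\Omega^2}{c_0^2}\|U\|_{L^2_e(Q_T)}^2$; taking square roots concludes.

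For the second inequality I would again argue slice by slice, now for $V(\cdot,s)$: since $\NOmega V(\cdot,s) \in H_0^1(\Omega)$, the Poincaré inequality yields $\|\NOmega V(\cdot,s)\|_{L^2(\Omega)} \le C_\Omega \|\nabla_{\bx}\NOmega V(\cdot,s)\|_{L^2(\Omega)}$. I then use the lower bound $c \ge c_0$ from~\eqref{eq:2} to absorb the coefficient, namely $c_0\|\nabla_{\bx}\NOmega V(\cdot,s)\|_{L^2(\Omega)} \le \|c\nabla_{\bx}\NOmega V(\cdot,s)\|_{L^2(\Omega)}$, so that the spatial $L^2$-norm of $\NOmega V(\cdot,s)$ is controlled by $\tfrac{C_\Omega}{c_0}\|c\nabla_{\bx}\NOmega V(\cdot,s)\|_{L^2(\Omega)}$. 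Squaring, weighting by $e^{-s/T}$, and integrating, the right-hand side collapses to $\tfrac{C_\Omega^2}{c_0^2}\|V\|_{\NOmegae}^2$ via the last equality in~\eqref{eq:27}, which gives the claim.

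There is no genuine obstacle here: the two estimates are purely a matter of combining the spatial results (Lemma~\ref{lem:41} for the first, and the Poincaré inequality together with $c \ge c_0$ for the second) with the monotonicity of the weighted time integral. The only point requiring a little care is the measurability and integrability bookkeeping needed to justify passing the pointwise-in-time inequalities under the integral sign; this follows from the facts that $U \in L^2(Q_T)$ and $V \in L^2(0,T;[H_0^1(\Omega)]')$, together with the norm equivalences already established, make the relevant integrands measurable and integrable.
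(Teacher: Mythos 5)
Your proposal is correct and follows essentially the same route as the paper: both bounds are obtained slice-wise in time (Lemma~\ref{lem:41} for the first, the Poincar\'e inequality combined with $c \ge c_0$ for the second) and then integrated against the weight $e^{-s/T}$, exactly as in the paper's own proof.
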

\begin{proof}
For all~$t \in (0,T)$, if $U(\cdot,t) \in L^2(\Omega)$, then $\NOmega U(\cdot,t) \in H^1_0(\Omega)$. Then, using~Lemma~\ref{lem:41} we derive the first bound:
\begin{equation*}
\begin{aligned}
    \| U\|^2_{\NOmegae} &= \int_0^T \langle U(\cdot,t), \NOmega U(\cdot,t) \rangle_{H_0^1(\Omega)} \, e^{-t/T} \, \dd t
     = \int_0^T \| c^2 \nabla_{\bx} \NOmega U(\cdot,t) \|^2_{L^2(\Omega)} \, e^{-t/T} \, \dd t
    \\ & \le \frac{C_\Omega^2}{c_0^2} \int_0^T \| U(\cdot,t) \|^2_{L^2(\Omega)} \, e^{-t/T} \, \dd t
    = \frac{C_\Omega^2}{c_0^2} \| U \|^2_{L^2_e(Q_T)}.
\end{aligned}
\end{equation*}
For the second bound, if $V(\cdot,t) \in [H^1_0(\Omega)]'$, the Poincar\'e inequality implies
\begin{equation*}
    \| \NOmega V(\cdot,t) \|_{L^2(\Omega)} \le C_\Omega \| \nabla_{\bx} \NOmega V(\cdot,t) \|_{L^2(\Omega)},
\end{equation*}
from which we conclude 
\begin{align*}
    \| \NOmega V \|_{L^2_e(Q_T)}^2 &=\int_0^T \| \NOmega V(\cdot,t) \|_{L^2(\Omega)}^2 \, e^{-t/T} \, \dd t
    \\ & \le \frac{C_\Omega^2}{c_0^2} \int_0^T \| c \nabla_{\bx} \NOmega V(\cdot,t) \|_{L^2(\Omega)}^2 \, e^{-t/T} \, \dd t 
    \\&  = \frac{C_\Omega^2}{c_0^2} \int_0^T \langle V(\cdot,t), \NOmega V(\cdot,t)\rangle_{H^1_0(\Omega)} \, e^{-t/T} \, \dd t 
     = \frac{C_\Omega^2}{c_0^2} \| V \|_{\NOmegae}^2.
\end{align*}
\end{proof}
\noindent
For $(U,V)$ and $(\lambda,\chi)$ with regularity as in \eqref{eq:24}, we define the following norms:
\begin{equation} \label{eq:28}
\begin{aligned} 
    \| (U,V) \|^2_{\mathcal{V}_e(Q_T)} & := \| \partial_t U \|_{L^2_e(Q_T)}^2 + \| \partial_t V \|^2_{\NOmega_e} + \| c \nabla_{\bx} U \|^2_{L^2_e(Q_T)} + \| V \|^2_{L^2_e(Q_T)}, 
    \\  \| (\lambda,\chi) \|^2_{\mathcal{W}_e(Q_T)} & := \| \lambda \|^2_{L^2_e(Q_T)} + \| \chi \|^2_{\NOmegae}.
\end{aligned}
\end{equation}
For functions with additional regularity, the bilinear form~$\A$ defined in \eqref{eq:23} satisfies an inf-sup condition with respect to the norms in~\eqref{eq:28}.
\begin{proposition}[Inf-sup in the norms in~\eqref{eq:28}] \label{prop:43}
For any 
\begin{equation} \label{eq:29}
    (U,V) \in H^1_{0,\bullet}(0,T;H_0^1(\Omega)) \times H^1_{0,\bullet}(0,T;L^2(\Omega)), 
\end{equation}
there exists
\begin{equation} \label{eq:30}
    (\lambda,\chi) \in L^2(0,T;H_0^1(\Omega)) \times L^2(Q_T)
\end{equation}
such that
\begin{equation} \label{eq:31}
    \frac{\A((U,V),(\lambda,\chi))}{\| (U,V) \|_{\mathcal{V}_e(Q_T)}\| (\lambda,\chi) \|_{\mathcal{W}_e(Q_T)}} \ge \frac{1}{2\sqrt{C_\Omega^2/c_0^2+4T^2}},
\end{equation}
where~$C_\Omega$ is the Poincar\'e inequality, and~$c_0$ the constant in~\eqref{eq:2}.
\end{proposition}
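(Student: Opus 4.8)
The plan is to prove \eqref{eq:31} constructively: for each $(U,V)$ as in \eqref{eq:29} I would exhibit a single test pair $(\lambda,\chi)$, built — as announced in the introduction — from the time derivatives of the trial functions and the Newton potential, for which the lower bound on $\A$ and the upper bound on $\|(\lambda,\chi)\|_{\mathcal{W}_e(Q_T)}$ can both be computed almost explicitly. Concretely, I would take
\[
\lambda := 2T\,\partial_t U + \NOmega \partial_t V, \qquad \chi := 2T\,\partial_t V - \partial_t U,
\]
which belongs to the space \eqref{eq:30}, since $\partial_t U,\NOmega\partial_t V \in L^2(0,T;H_0^1(\Omega))$ while $\partial_t V,\partial_t U \in L^2(Q_T)$.

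First I would expand $\A((U,V),(\lambda,\chi))$ according to \eqref{eq:23}. The pieces scaled by $2T$ are treated by the weighted integration-by-parts identity \eqref{eq:9}: since $U,V\in H^1_{0,\bullet}$, the factor $2T$ exactly cancels the $\frac{1}{2T}$ produced by \eqref{eq:9}, so that $2T(c^2\nabla_{\bx}U,\nabla_{\bx}\partial_t U)_{L^2_e(Q_T)}$ and $2T(V,\partial_t V)_{L^2_e(Q_T)}$ contribute $\|c\nabla_{\bx}U\|^2_{L^2_e(Q_T)}$ and $\|V\|^2_{L^2_e(Q_T)}$ plus the nonnegative endpoint terms $\frac{T}{e}(\|c\nabla_{\bx}U(\cdot,T)\|^2_{L^2(\Omega)}+\|V(\cdot,T)\|^2_{L^2(\Omega)})$; moreover the two contributions $2T(\partial_t V,\partial_t U)_{L^2_e(Q_T)}$ arising from $\langle\partial_t V,\lambda\rangle_e$ and from $-(\partial_t U,\chi)_{L^2_e(Q_T)}$ cancel. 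The $\NOmega\partial_t V$ part of $\lambda$ and the $-\partial_t U$ part of $\chi$ contribute $\|\partial_t V\|^2_{\NOmegae}$ (by \eqref{eq:27}) and $\|\partial_t U\|^2_{L^2_e(Q_T)}$, together with the residual coupling $(\partial_t V,U)_{L^2_e(Q_T)}-(V,\partial_t U)_{L^2_e(Q_T)}$, where I use the defining identity \eqref{eq:25} of $\NOmega$ to rewrite $(c^2\nabla_{\bx}U,\nabla_{\bx}\NOmega\partial_t V)_{L^2_e(Q_T)}=(\partial_t V,U)_{L^2_e(Q_T)}$. Collecting everything and dropping the nonnegative endpoint terms gives
\[
\A((U,V),(\lambda,\chi)) \ge \|(U,V)\|^2_{\mathcal{V}_e(Q_T)} + (\partial_t V,U)_{L^2_e(Q_T)} - (V,\partial_t U)_{L^2_e(Q_T)}.
\]

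The coupling term is the crux: it is skew (it would be absent for a parabolic problem) and not sign-definite, reflecting the hyperbolic nature of the problem. I would control it by Cauchy--Schwarz in the natural pairings, namely $|(\partial_t V,U)_{L^2_e(Q_T)}| = |(c^2\nabla_{\bx}\NOmega\partial_t V,\nabla_{\bx}U)_{L^2_e(Q_T)}| \le \|\partial_t V\|_{\NOmegae}\|c\nabla_{\bx}U\|_{L^2_e(Q_T)}$ and $|(V,\partial_t U)_{L^2_e(Q_T)}|\le \|V\|_{L^2_e(Q_T)}\|\partial_t U\|_{L^2_e(Q_T)}$, and then absorb it into the four square terms of $\|(U,V)\|^2_{\mathcal{V}_e(Q_T)}$ via $x^2+y^2-xy\ge\frac12(x^2+y^2)$, obtaining the coercivity-type estimate $\A((U,V),(\lambda,\chi))\ge \frac12\|(U,V)\|^2_{\mathcal{V}_e(Q_T)}$.

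It remains to bound $\|(\lambda,\chi)\|_{\mathcal{W}_e(Q_T)}$, and here I expect the decisive observation. Expanding $\|\lambda\|^2_{L^2_e(Q_T)}$ and $\|\chi\|^2_{\NOmegae}$ produces, besides $4T^2\|\partial_t U\|^2_{L^2_e(Q_T)}$, $4T^2\|\partial_t V\|^2_{\NOmegae}$, $\|\NOmega\partial_t V\|^2_{L^2_e(Q_T)}$ and $\|\partial_t U\|^2_{\NOmegae}$, the mixed terms $4T(\partial_t U,\NOmega\partial_t V)_{L^2_e(Q_T)}$ and $-4T(\partial_t V,\partial_t U)_{\NOmegae}$; these cancel exactly, because the self-adjointness of $\NOmega$ encoded in \eqref{eq:25}--\eqref{eq:27} yields $(\partial_t U,\NOmega\partial_t V)_{L^2_e(Q_T)}=(c^2\nabla_{\bx}\NOmega\partial_t U,\nabla_{\bx}\NOmega\partial_t V)_{L^2_e(Q_T)}=(\partial_t V,\partial_t U)_{\NOmegae}$. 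The four surviving terms are controlled by Lemma~\ref{lem:42}, which gives $\|\NOmega\partial_t V\|^2_{L^2_e(Q_T)}\le \frac{C_\Omega^2}{c_0^2}\|\partial_t V\|^2_{\NOmegae}$ and $\|\partial_t U\|^2_{\NOmegae}\le\frac{C_\Omega^2}{c_0^2}\|\partial_t U\|^2_{L^2_e(Q_T)}$, whence $\|(\lambda,\chi)\|^2_{\mathcal{W}_e(Q_T)}\le\bigl(C_\Omega^2/c_0^2+4T^2\bigr)\bigl(\|\partial_t U\|^2_{L^2_e(Q_T)}+\|\partial_t V\|^2_{\NOmegae}\bigr)\le\bigl(C_\Omega^2/c_0^2+4T^2\bigr)\|(U,V)\|^2_{\mathcal{V}_e(Q_T)}$. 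Dividing the two estimates reproduces exactly the constant in \eqref{eq:31}. The main obstacle, around which I would design the whole construction, is this double role of the $2T$ scaling together with the exponential weight: the weight supplies through \eqref{eq:9} the coercivity that the indefinite skew coupling would otherwise destroy, while the precise factor $2T$ simultaneously converts the weighted integration by parts into unit-coefficient square terms and forces the mixed terms in the test norm to cancel via the self-adjointness of $\NOmega$.
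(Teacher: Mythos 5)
Your proposal is correct and follows essentially the same route as the paper: the identical test pair $\lambda=\NOmega\partial_t V+2T\,\partial_t U$, $\chi=-\partial_t U+2T\,\partial_t V$, the identity \eqref{eq:9} to turn the $2T$-scaled terms into unit-coefficient squares plus nonnegative endpoint contributions, Young's inequality to absorb the skew coupling, and Lemma~\ref{lem:42} to bound the test norm. Your explicit remark that the mixed terms in $\|(\lambda,\chi)\|^2_{\mathcal{W}_e(Q_T)}$ cancel by the self-adjointness of $\NOmega$ is in fact the justification the paper leaves implicit in its bound $\|(\lambda,\chi)\|^2_{\mathcal{W}_e(Q_T)}\le (C_\Omega^2/c_0^2+4T^2)\|(U,V)\|^2_{\mathcal{V}_e(Q_T)}$, which would otherwise require an extra factor of $2$.
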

\begin{proof}
 We prove that, for any~$(U, V)$ with regularity as in \eqref{eq:29}, there exists~$(\chi, \lambda)$ with regularity as in \eqref{eq:30} such that
\begin{align} 
\label{eq:32}
    \|( \lambda, \chi)\|_{\mathcal{W}_e(Q_T)} &\le C_1 \| (U,V) \|_{\mathcal{V}_e(Q_T)},
    \\ \label{eq:33} \A((U,V),(\lambda,\chi))&\ge C_2\| (U,V) \|^2_{\mathcal{V}_e(Q_T)},
\end{align}
with constants~$C_1,C_2>0$ possibly depending on~$T$. Then, the estimate in~\eqref{eq:31} follows with constant on the right-hand smaller than or equal to~$C_2/C_1$.

\noindent
Let us fix~$(U, V)$ with regularity as in \eqref{eq:29} and define~$(\chi, \lambda)$ as
\begin{equation}\label{eq:34}
    \chi := -\partial_t U +2T\, \partial_t V, \quad \quad  
    \lambda := \NOmega \partial_t V + 2T\, \partial_t U.
\end{equation}
Note that $(\lambda,\chi)$ satisfies the regularity in \eqref{eq:30}. Using Lemma \ref{lem:42}, we have 
\begin{align*}
    \| (\lambda,\chi) \|^2_{\mathcal{W}_e(Q_T)} & = \| \lambda \|_{L^2_e(Q_T)}^2 +  \| \chi \|^2_{\NOmegae} \\
     & \leq  \| \NOmega \partial_t V \|_{L^2_e(Q_T)}^2 + 4T^2  \| \partial_t U \|_{L^2_e(Q_T)}^2 + \| \partial_t U\|^2_{\NOmegae} + 4T^2 \| \partial_t V\|^2_{\NOmegae}
    \\ & \le (C_\Omega^2/c_0^2+4T^2) \| (U,V) \|^2_{\mathcal{V}_e(Q_T)}\,,
\end{align*}
which proves~\eqref{eq:32} with~$C_1^2=C_\Omega^2/c_0^2+4T^2$. Using~\eqref{eq:9}, we compute
\begin{equation} \label{eq:35}
\begin{aligned}
    \A((U,V), (\lambda, \chi)) & = \| \partial_t V \|_{\NOmegae}^2 + 2T (\partial_t V, \partial_t U)_{L^2_e(Q_T)} + (U, \partial_t V)_{L^2_e(Q_T)}
    \\ & \quad + 2T  (c^2 \nabla_{\bx} U,\nabla_{\bx} \partial_t U)_{L^2_e(Q_T)} + \| \partial_t U \|_{L^2_e(Q_T)}^2 
    \\ & \quad - 2T (\partial_t U, \partial_t V)_{L^2_e(Q_T)} - (V, \partial_t U)_{L^2_e(Q_T)} 
    + 2T (V, \partial_t V)_{L^2_e(Q_T)}
    \\ & =  \| \partial_t U \|_{L^2_e(Q_T)}^2 + \| \partial_t V \|_{\NOmegae}^2  +\|c\nabla_{\bx} U \|_{L^2_e(Q_T)}^2
    \\ & \quad + \frac{T}{e} \| c\nabla_{\bx} U(\cdot,T)\|_{L^2(\Omega)}^2 +\| V \|_{L^2_e(Q_T)}^2 
    \\ & \quad + \frac{T}{e}\| V(\cdot,T)\|_{L^2(\Omega)}^2 + (U, \partial_t V)_{L^2_e(Q_T)} - (V, \partial_t U)_{L^2_e(Q_T)}.
\end{aligned}
\end{equation}
Recalling the definition of~$\NOmega$ in~\eqref{eq:25} and using the Young inequality, we deduce
\begin{align*}
    (U,\partial_t V)_{L^2_e(Q_T)} & = \int_0^T ( U(\cdot,t), \partial_t V(\cdot,t) )_{L^2(\Omega)} \, e^{-t/T} \, \dd t \\ &= \int_0^T (c^2\nabla_{\bx} U(\cdot,t), \nabla_{\bx} \NOmega \partial_t V(\cdot,t) )_{L^2(\Omega)} \, e^{-t/T} \, \dd t
    \\ & \ge - \frac{1}{2} \| c\nabla_{\bx} U \|^2_{L^2_e(Q_T)} - \frac{1}{2} \| c \nabla_{\bx} \NOmega \partial_t V \|^2_{L^2_e(Q_T)}\\
    & = - \frac{1}{2} \| c\nabla_{\bx} U \|^2_{L^2_e(Q_T)} - \frac{1}{2} \| \partial_t V \|^2_{\NOmegae}. 
\end{align*}
From this and~$(V,\partial_t U)_{L^2_e(Q_T)}\ge -\frac12 \| V \|^2_{L^2_e(Q_T)} -\frac12 \| \partial_t U \|^2_{L^2_e(Q_T)}$,
we continue~\eqref{eq:35} as
\begin{align*}
    \A((U,V), (\lambda, \chi))
    &  \ge \| \partial_t U \|_{L^2_e(Q_T)}^2 +  \| \partial_t V \|_{\NOmegae}^2  +\| c\nabla_{\bx} U \|_{L^2_e(Q_T)}^2\ +\| V \|_{L^2_e(Q_T)}^2
    \\ &  \quad -\frac{1}{2} \|c \nabla_{\bx} U \|_{L^2_e(Q_T)}^2 - \frac{1}{2} \| \partial_t V \|_{\NOmegae}^2  -\frac{1}{2}\| V \|_{L^2_e(Q_T)}^2 - \frac{1}{2}\| \partial_t U \|_{L^2_e(Q_T)}^2 
    \\ & = \frac{1}{2} \|(U,V)\|^2_{\mathcal{V}_e(Q_T)}.
\end{align*}
This leads to~\eqref{eq:33} with~$C_2 = 1/2$. Thus, taking into account that we have proven~\eqref{eq:32} with $C_1=\sqrt{C_\Omega^2/c_0^2+4T^2}$, we obtain \eqref{eq:31}.
\end{proof}
\begin{remark}
The additional regularity assumed in~\eqref{eq:29} gives the
possibility of including
the second terms in the definitions of~$\chi$ 
and~$\lambda$ in~\eqref{eq:34}. Indeed, under only the minimal regularity assumption on~$(U,V)$ given in~\eqref{eq:24}, the terms~$\partial_t V$ and~$\partial_t U$ are not guaranteed to be in~$L^2(Q_T)$ and~$L^2(0,T;H_0^1(\Omega))$, 
which is required by~\eqref{eq:30}. 
\eremk \end{remark}

\noindent
We also derive the following continuity and coercivity-like estimates. 
\begin{proposition} \label{prop:45}
For~$(U,V)$ and~$(\lambda,\chi)$ with regularity as in~\eqref{eq:24}, we have
\begin{equation*}
    \A((U,V),(\lambda,\chi)) \le \sqrt{2} \| (U,V) \|_{\mathcal{V}_e(Q_T)}(\| c\nabla_{\bx} \lambda \|_{L^2_e(Q_T)}^2+\| \chi \|_{L^2_e(Q_T)}^2)^{\frac12},
\end{equation*}
where~$\| \cdot \|_{\mathcal{V}_e(Q_T)}$ is the defined in~\eqref{eq:28}. Furthermore, for $(U,V)$ with regularity as in~\eqref{eq:24} we also have
\begin{equation*}
    \A((U,V),(\partial_t U, \partial_t V)) \ge \frac{1}{2T} ( \|c\nabla_{\bx} U \|^2_{L^2_e(Q_T)} + \| V \|^2_{L^2_e(Q_T)}).
\end{equation*}
\end{proposition}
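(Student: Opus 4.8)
The statement splits into a continuity bound and a coercivity-type bound, and I would treat them separately. Both rely on rewriting the troublesome duality term $\langle \partial_t V, \cdot \rangle_e$ through the Newton potential $\NOmega$, using the defining relation $\langle \partial_t V, \lambda \rangle_e = (c^2 \nabla_{\bx} \NOmega \partial_t V, \nabla_{\bx} \lambda)_{L^2_e(Q_T)}$ together with the identity $\| \partial_t V \|_{\NOmegae} = \| c \nabla_{\bx} \NOmega \partial_t V \|_{L^2_e(Q_T)}$ recorded in~\eqref{eq:27}.

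For the continuity estimate, I would bound the four terms defining $\A$ in~\eqref{eq:23} one by one with the Cauchy--Schwarz inequality in $(\cdot,\cdot)_{L^2_e(Q_T)}$. The gradient term gives $(c^2 \nabla_{\bx} U, \nabla_{\bx} \lambda)_{L^2_e(Q_T)} \le \| c \nabla_{\bx} U \|_{L^2_e(Q_T)} \| c \nabla_{\bx} \lambda \|_{L^2_e(Q_T)}$; the duality term, after the Newton-potential rewriting, gives $\langle \partial_t V, \lambda \rangle_e \le \| \partial_t V \|_{\NOmegae} \| c \nabla_{\bx} \lambda \|_{L^2_e(Q_T)}$; and the two remaining terms give $\| \partial_t U \|_{L^2_e(Q_T)} \| \chi \|_{L^2_e(Q_T)}$ and $\| V \|_{L^2_e(Q_T)} \| \chi \|_{L^2_e(Q_T)}$. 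Collecting the coefficients of $\| c \nabla_{\bx} \lambda \|_{L^2_e(Q_T)}$ and of $\| \chi \|_{L^2_e(Q_T)}$, I would then apply the elementary inequality $a p + b q \le (a^2+b^2)^{1/2}(p^2+q^2)^{1/2}$ and use $(x+y)^2 \le 2(x^2+y^2)$ on each grouped coefficient to recognise $a^2+b^2 \le 2 \| (U,V) \|_{\mathcal{V}_e(Q_T)}^2$. This produces exactly the factor $\sqrt 2$ and the quantity $(\| c \nabla_{\bx} \lambda \|_{L^2_e(Q_T)}^2 + \| \chi \|_{L^2_e(Q_T)}^2)^{1/2}$ appearing in the claim; note that this part uses only Cauchy--Schwarz, so the minimal regularity~\eqref{eq:24} suffices.

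For the coercivity-type bound I would substitute the test pair $(\lambda,\chi) = (\partial_t U, \partial_t V)$ into $\A$. The decisive observation is that the first and third terms cancel: interpreting the duality pairing as the $L^2_e(Q_T)$ inner product (licit once $\partial_t V \in L^2(Q_T)$), we have $\langle \partial_t V, \partial_t U \rangle_e - (\partial_t U, \partial_t V)_{L^2_e(Q_T)} = 0$. What survives is $(c^2 \nabla_{\bx} U, \nabla_{\bx} \partial_t U)_{L^2_e(Q_T)} + (V, \partial_t V)_{L^2_e(Q_T)}$. Writing $c^2 \nabla_{\bx} U \cdot \nabla_{\bx} \partial_t U = (c \nabla_{\bx} U) \cdot \partial_t (c \nabla_{\bx} U)$, since $c$ is time-independent, I would apply the weighted integration-by-parts identity~\eqref{eq:9} in time (pointwise in space, component by component) to both surviving terms, obtaining $\frac{1}{2T} \| c \nabla_{\bx} U \|_{L^2_e(Q_T)}^2 + \frac{1}{2e} \| c \nabla_{\bx} U(\cdot,T) \|_{L^2(\Omega)}^2$ and $\frac{1}{2T} \| V \|_{L^2_e(Q_T)}^2 + \frac{1}{2e} \| V(\cdot,T) \|_{L^2(\Omega)}^2$. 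Dropping the two nonnegative terminal contributions yields the claimed lower bound $\frac{1}{2T}(\| c \nabla_{\bx} U \|_{L^2_e(Q_T)}^2 + \| V \|_{L^2_e(Q_T)}^2)$.

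The only genuine obstacle is regularity and admissibility in the second estimate, not any sharp inequality. The cancellation of the cross terms requires $\partial_t V \in L^2(Q_T)$, so that the duality pairing collapses to the $L^2_e$-product, and the clean use of~\eqref{eq:9} (with only the two nonnegative terminal terms and no initial contribution) requires $(c \nabla_{\bx} U)(\cdot,0) = 0$ and $V(\cdot,0) = 0$. Both hold precisely when $(U,V) \in H^1_{0,\bullet}(0,T;H^1_0(\Omega)) \times H^1_{0,\bullet}(0,T;L^2(\Omega))$ as in~\eqref{eq:29}, which is also what makes $(\partial_t U, \partial_t V)$ an admissible test pair; I would therefore read the coercivity bound under that stronger (zero-initial-data) regularity, keeping the continuity bound under the minimal regularity~\eqref{eq:24}.
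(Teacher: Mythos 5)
Your proof is correct and follows essentially the same route as the paper: term-by-term Cauchy--Schwarz with the Newton-potential rewriting of the duality term for the continuity bound, and cancellation of the two cross terms followed by the weighted identity~\eqref{eq:9} for the coercivity-type bound. Your closing observation that~\eqref{eq:9} (and hence the absence of an initial-time contribution) relies on the vanishing initial data of $H^1_{0,\bullet}$, which the minimal regularity~\eqref{eq:24} alone does not guarantee, is a legitimate point that the paper's proof passes over silently.
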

\begin{proof}
The continuity property follows from the definition of $\NOmega$ in \eqref{eq:25} and the Cauchy-Schwarz inequality: 
\begin{align*}
    \abs{\A((U,V),(\lambda,\chi))} & \le
    \| \partial_t V \|_{\NOmegae} \| c \nabla_{\bx} \lambda \|_{L^2_e(Q_T)} +\|c \nabla_{\bx} U \|_{L^2_e(Q_T)} \| c \nabla_{\bx} \lambda \|_{L^2_e(Q_T)} \\ &\qquad  + \| \partial_t U \|_{L^2_e(Q_T)}\| \chi \|_{L^2_e(Q_T)}+\| V \|_{L^2_e(Q_T)}\| \chi \|_{L^2_e(Q_T)}
    \\ & \le \sqrt{2} \| (U,V) \|_{\mathcal{V}_e(Q_T)}(\| c\nabla_{\bx} \lambda \|_{L^2_e(Q_T)}^2+\| \chi \|_{L^2_e(Q_T)}^2)^{\frac12}.
\end{align*}
To prove the coercivity-like property, we apply~\eqref{eq:9}:
\begin{align*}
    \A((U,V),(\partial_t U,\partial_t V)) & = (\partial_t V, \partial_t U)_{L^2_e(Q_T)} + (c^2\nabla_{\bx} U, \nabla_{\bx} \partial_t U)_{L^2_e(Q_T)} \\ &\qquad -(\partial_t U, \partial_t V)_{L^2_e(Q_T)} + (V,\partial_t V)_{L^2_e(Q_T)}
    \\ & = (c^2 \nabla_{\bx} U, \nabla_{\bx} \partial_t U)_{L^2_e(Q_T)} + (V,\partial_t V)_{L^2_e(Q_T)}
    \\ & \ge \frac{1}{2T} (\|c \nabla_{\bx} U \|_{L^2_e(Q_T)}^2 + \| V \|_{L^2_e(Q_T)}^2).
\end{align*}
\end{proof}
\begin{remark}\label{rem:46}
We emphasize that, for the bilinear form~$\A$, we established continuity using a stronger norm for the test functions, as compared to the norm employed in the inf-sup condition~\eqref{eq:31}. This leads to difficulties in the analysis of both the variational formulation and its discretization.
\end{remark}

\subsection{Space--time variational formulation}
Finally, we consider the variational formulation
\begin{tcolorbox}[
    colframe=black!50!white,
    colback=blue!5!white,
    boxrule=0.5mm,
    sharp corners,
    boxsep=0.5mm,
    top=0.5mm,
    bottom=0.5mm,
    right=0.25mm,
    left=0.1mm
]
    \begingroup
    \setlength{\abovedisplayskip}{0pt}
    \setlength{\belowdisplayskip}{0pt}
    \phantom{1} find $(U,V) \in H^1_{0,\bullet}(0,T;H_0^1(\Omega)) \times H^1_{0,\bullet}(0,T;L^2(\Omega))$ such that:  \vspace{0.2cm}
    \begin{equation} \label{eq:36}
        \A((U,V),(\lambda,\chi))=(F,\lambda)_{L^2_e(Q_T)} - (c^2 \nabla_{\bx} U_0, \nabla_{\bx} \lambda)_{L^2_e(Q_T)} - (V_0,\chi)_{L^2_e(Q_T)}
        \vspace{0.2cm}
    \end{equation}
    \phantom{1} for all~$(\lambda,\chi) \in L^2(0,T;H_0^1(\Omega)) \times L^2(Q_T)$.
    \endgroup
\end{tcolorbox}
\medskip
\noindent
As a consequence of Remark~\ref{rem:46}, for the analysis of problem~\eqref{eq:36}, Ne\v{c}as' theorem cannot be applied, and we rely on the existence result in Proposition~\ref{prop:22}.

\begin{theorem} \label{thm:47}
Let us assume the regularity on the data as in Assumption~\ref{ass:21}. Then, there exists a unique solution to problem \eqref{eq:36}, and it satisfies
\begin{equation*}
\begin{split}
    &\| (U,V) \|_{\mathcal{V}_e(Q_T)} \\
    &\quad\le 2\sqrt{C_\Omega^2/c_0^2 + 4T^2}\, \bigl(\| F\|_{L^2_e(Q_T)} +\sqrt{T}\,\| \nabla_{\bx} \cdot (c \nabla_{\bx} U_0)\|_{L^2(\Omega)} +\sqrt{T}\, \| c\nabla_{\bx} V_0 \|_{L^2(\Omega)}\bigr).
    \end{split}
\end{equation*}
Moreover, it holds true that~$V \in L^2(0,T;H_0^1(\Omega))$.
\begin{proof}
The existence of a solution $U$ to the variational formulation~\eqref{eq:4}, with the required regularity, follows from Assumption~\ref{ass:21}, see Proposition~\ref{prop:22}. Setting $V = \partial_t U$, this implies the existence of a solution to~\eqref{eq:36}, together with the property~$V \in L^2(0,T;H_0^1(\Omega))$. 
The corresponding stability estimate follows from the inf-sup condition in Proposition~\ref{prop:43}, together with the following continuity property of the functional on the right-hand side:
\begin{equation*} \label{eq:Fcont}
\begin{aligned}
    &(F,\lambda)_{L^2_e(Q_T)} - (c^2 \nabla_{\bx} U_0, \nabla_{\bx} \lambda)_{L^2_e(Q_T)} - (V_0,\chi)_{L^2_e(Q_T)}
    \\ 
    &\quad =(F,\lambda)_{L^2_e(Q_T)} + (\nabla_{\bx}\cdot(c^2 \nabla_{\bx}U_0), \lambda)_{L^2_e(Q_T)} - (c\nabla_{\bx} V_0,c\nabla_{\bx} \NOmega\chi)_{L^2_e(Q_T)}
    \\  
    &\quad \le \left(\| F \|_{L^2_e(Q_T)}  +\sqrt{T}\,\| \nabla_{\bx} \cdot (c \nabla_{\bx} U_0)\|_{L^2(\Omega)}  +\sqrt{T}\, \| c\nabla_{\bx} V_0 \|_{L^2(\Omega)}\right) \|(\lambda,\chi) \|_{\mathcal{W}_e(Q_T)},
\end{aligned}
\end{equation*}
where, in the last step, we applied the Cauchy-Schwarz inequality and used the identity $\|c\nabla_{\bx} \NOmega\chi\|_{L^2_e(Q_T)}=\|\chi\|_{\NOmega_e}$, see~\eqref{eq:27}.
Indeed, we compute
\begin{align*}
    &\frac{1}{2\sqrt{C_\Omega^2/c_0^2+4T^2}} \| (U,V) \|_{\mathcal{V}_e(Q_T)} 
     \le
    \sup_{0 \ne (\lambda,\chi) \in (L^2(0,T;H_0^1(\Omega)) \times L^2(Q_T))}\frac{\A((U,V),(\lambda,\chi))}{\|(\lambda,\chi) \|_{\mathcal{W}_e(Q_T)}}
    \\ &\,  = 
    \sup_{0 \ne (\lambda,\chi) \in L^2(0,T;H_0^1(\Omega)) \times L^2(Q_T)}\frac{(F,\lambda)_{L^2_e(Q_T)} - (c^2 \nabla_{\bx}U_0,\nabla_{\bx} \lambda)_{L^2_e(Q_T)} - (V_0,\chi)_{L^2_e(Q_T)}}{\|(\lambda,\chi) \|_{\mathcal{W}_e(Q_T)}}
     \\ &\, \le \| F\|_{L^2_e(Q_T)} +\sqrt{T}\,\| \nabla_{\bx} \cdot (c \nabla_{\bx} U_0)\|_{L^2(\Omega)} +\sqrt{T}\, \| c\nabla_{\bx} V_0 \|_{L^2(\Omega)}.
\end{align*}
\end{proof}
\end{theorem}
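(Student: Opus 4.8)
The plan is to obtain existence and the extra regularity $V \in L^2(0,T;H_0^1(\Omega))$ directly from Proposition~\ref{prop:22}, while deriving uniqueness and the stated stability bound from the inf-sup condition of Proposition~\ref{prop:43}. The structural point flagged in Remark~\ref{rem:46} is decisive here: continuity and inf-sup are measured in \emph{different} test norms, so the Banach--Ne\v{c}as--Babu\v{s}ka theorem does not deliver existence, and existence must instead be imported from the second-order theory rather than deduced abstractly. I regard this as the conceptual crux of the argument; the remaining steps are then comparatively routine.

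For existence I would start from the solution $\tilde U$ of~\eqref{eq:4} provided by Proposition~\ref{prop:22}, set $\tilde V := \partial_t \tilde U$, and define $U := \tilde U - U_0$, $V := \tilde V - V_0$. Since $\tilde U_{|t=0}=U_0$ and $\partial_t\tilde U_{|t=0}=V_0$, the pair $(U,V)$ has vanishing traces at $t=0$ and the regularity required in~\eqref{eq:29}; moreover $\tilde V \in L^2(0,T;H_0^1(\Omega))$ and $V_0 \in H_0^1(\Omega)$ yield $V \in L^2(0,T;H_0^1(\Omega))$, which is the claimed extra regularity. To verify that $(U,V)$ solves~\eqref{eq:36}, I would test the strong-in-time identity~\eqref{eq:4} with $\lambda(\cdot,t)$, multiply by $e^{-t/T}$, and integrate over $(0,T)$ to get $\langle\partial_t\tilde V,\lambda\rangle_e + (c^2\nabla_{\bx}\tilde U,\nabla_{\bx}\lambda)_{L^2_e(Q_T)}=(F,\lambda)_{L^2_e(Q_T)}$; substituting $\nabla_{\bx}\tilde U=\nabla_{\bx}U+\nabla_{\bx}U_0$ reproduces the $\lambda$-block of~\eqref{eq:36}, while the identity $\partial_t U=\tilde V$ makes the $\chi$-block $-(\partial_t U,\chi)_{L^2_e(Q_T)}+(V,\chi)_{L^2_e(Q_T)}$ collapse to $-(V_0,\chi)_{L^2_e(Q_T)}$, exactly matching the right-hand side.

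Uniqueness is immediate from Proposition~\ref{prop:43}: if $(U,V)$ with the regularity~\eqref{eq:29} satisfies $\A((U,V),(\lambda,\chi))=0$ for all admissible $(\lambda,\chi)$, then the inf-sup inequality~\eqref{eq:31} forces $\|(U,V)\|_{\mathcal{V}_e(Q_T)}=0$; since the vanishing of $\|\partial_t U\|_{L^2_e(Q_T)}$ and $\|V\|_{L^2_e(Q_T)}$ together with $U_{|t=0}=0$ gives $U=V=0$, the solution is unique.

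The stability estimate follows by bounding the right-hand functional in the $\|\cdot\|_{\mathcal{W}_e(Q_T)}$ norm and invoking~\eqref{eq:31}. For the $U_0$-term I would integrate by parts in space, $(c^2\nabla_{\bx}U_0,\nabla_{\bx}\lambda)_{L^2_e(Q_T)}=-(\nabla_{\bx}\cdot(c^2\nabla_{\bx}U_0),\lambda)_{L^2_e(Q_T)}$, which is legitimate because Assumption~\ref{ass:21} guarantees $\nabla_{\bx}\cdot(c^2\nabla_{\bx}U_0)\in L^2(\Omega)$, and then apply Cauchy--Schwarz. For the $V_0$-term I would use the defining property~\eqref{eq:25} of $\NOmega$ to write $(V_0,\chi)_{L^2_e(Q_T)}=(c\nabla_{\bx}V_0,c\nabla_{\bx}\NOmega\chi)_{L^2_e(Q_T)}$ and recognise $\|c\nabla_{\bx}\NOmega\chi\|_{L^2_e(Q_T)}=\|\chi\|_{\NOmegae}$ via~\eqref{eq:27}. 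Because $U_0$ and $V_0$ are time-independent, the factor $\sqrt{T}$ in both initial-data terms comes from $\int_0^T e^{-t/T}\,\dd t = T(1-e^{-1})\le T$. Combining these bounds with $\|\lambda\|_{L^2_e(Q_T)},\|\chi\|_{\NOmegae}\le\|(\lambda,\chi)\|_{\mathcal{W}_e(Q_T)}$ produces continuity of the functional with constant $\|F\|_{L^2_e(Q_T)}+\sqrt{T}\,\|\nabla_{\bx}\cdot(c^2\nabla_{\bx}U_0)\|_{L^2(\Omega)}+\sqrt{T}\,\|c\nabla_{\bx}V_0\|_{L^2(\Omega)}$, and dividing by the inf-sup constant $\tfrac{1}{2}(C_\Omega^2/c_0^2+4T^2)^{-1/2}$ yields the asserted estimate.
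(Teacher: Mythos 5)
Your proposal is correct and follows essentially the same route as the paper: existence and the extra regularity are imported from Proposition~\ref{prop:22} via $V=\partial_t U$, and the stability bound is obtained from the inf-sup condition of Proposition~\ref{prop:43} together with continuity of the right-hand-side functional, using spatial integration by parts for the $U_0$-term and the Newton potential identity~\eqref{eq:25},~\eqref{eq:27} for the $V_0$-term. You are in fact slightly more careful than the paper in two places --- making the shift $(U,V)=(\tilde U-U_0,\tilde V-V_0)$ explicit so the pair lies in the $H^1_{0,\bullet}$ trial space, and spelling out that uniqueness follows from the inf-sup inequality --- but these are elaborations of the same argument, not a different one.
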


\section{Discretization of the space--time problem} \label{sec:5}

In this section, we study conforming space--time tensor product discretizations of the variational formulation in~\eqref{eq:36}, proving their unconditional stability and deriving error estimates.

\subsection{Definition of the method and well-posedness}

For the spatial discretization, let us consider a discrete space~$S_{h_{\bx}}(\Omega) \subset H_0^1(\Omega)$ depending on a spatial parameter~$h_{\bx}$ (e.g. piecewise linear, continuous functions over a triangulation of~$\Omega$ of mesh size~$h_{\bx}$). For the temporal discretization, we introduce a discrete space~$S_{h_t}(0,T) \subset H_{0,\bullet}^1(0,T)$ on a mesh of~$(0,T)$ with mesh size~$h_t$. Finally, we define the tensor product spaces
\begin{align*}
    Q_{\bh}(Q_T):=S_{h_{\bx}}(\Omega) \otimes S_{h_t}(0,T), \quad\quad  \partial_t Q_{\bh} (Q_T) :=S_{h_{\bx}}(\Omega) \otimes~\partial_t S_{h_t}(0,T).
\end{align*}
The conforming discretization of \eqref{eq:36} we consider reads as follows:
\begin{tcolorbox}[
    colframe=black!50!white,
    colback=blue!5!white,
    boxrule=0.5mm,
    sharp corners,
    boxsep=0.5mm,
    top=0.5mm,
    bottom=0.5mm,
    right=0.25mm,
    left=0.1mm
]
    \begingroup
    \setlength{\abovedisplayskip}{0pt}
    \setlength{\belowdisplayskip}{0pt}
    \phantom{1} find $ (U_{\bh},V_{\bh}) \in (Q_{\bh}(Q_T))^2$ such that:  \vspace{0.2cm}
    \begin{equation} \label{eq:37}
    \begin{aligned}
        \A((U_{\bh},V_{\bh}),(\lambda_{\bh},\chi_{\bh})) =(F,\lambda_{\bh})_{{L^2_e(Q_T)}} & -(c^2 \nabla_{\bx} U_{0}, \nabla_{\bx} \lambda_{\bh})_{{L^2_e(Q_T)}} 
        \\ & - (V_0,\chi_{\bh})_{{L^2_e(Q_T)}} 
    \end{aligned}
    \end{equation}
    \phantom{1} for all~$(\lambda_{\bh}, \chi_{\bh}) \in (\partial_t Q_{\bh} (Q_T))^2$.
    \endgroup
\end{tcolorbox}
\medskip

\noindent
Let~$\NOmegahx : L^2(\Omega) \to S_{h_{\bx}}(\Omega)$ be the \emph{discrete} Newton potential operator defined by
\begin{equation} \label{eq:38}
    (c^2 \nabla_{\bx} \NOmegahx U, \nabla_{\bx} V_{h_{\bx}})_{L^2(\Omega)} = (U,V_{h_{\bx}})_{L^2(\Omega)} \quad \text{for all~} V_{h_{\bx}} \in S_{h_{\bx}}(\Omega).
\end{equation}
Owing to Lax-Milgram's lemma, the operator $\NOmegahx$ is well-defined. Proceeding as in Lemma \ref{lem:41}, we deduce for all $U \in L^2(\Omega)$
\begin{equation*}
    \| c \nabla_{\bx} \NOmegahx U \|_{L^2(\Omega)} \le \frac{C_\Omega}{c_0} \|U \|_{L^2(\Omega)}.
\end{equation*}
For functions in $L^2(\Omega)$, similarly as in \eqref{eq:26}, we define $\| U \|_{\NOmegahx} := \| c \nabla_{\bx} \NOmegahx U \|_{L^2(\Omega)}$. Note that this is a seminorm in $L^2(\Omega)$, and is actually a norm in $S_{h_{\bx}}(\Omega)$. Indeed, for $U_{h_{\bx}} \in S_{h_{\bx}}(\Omega)$, if $\| U_{h_{\bx}} \|_{\NOmegahx} = 0$, then $\NOmegahx U_{h_{\bx}} = 0$ and from \eqref{eq:38} 
\begin{equation*}
    (U_{h_{\bx}}, V_{h_{\bx}})_{L^2(\Omega)} = 0 \quad \text{for all~} V_{h_{\bx}} \in S_{h_{\bx}}(\Omega).
\end{equation*}
that implies $U_{h_{\bx}} =0$. Moreover,  we notice that, for all $U \in L^2(\Omega)$,
\begin{equation}\label{eq:39}
    \| U \|_{\NOmegahx} \le \| U \|_{\NOmega}. 
\end{equation}
Indeed, with \eqref{eq:38}, \eqref{eq:25} and Cauchy-Schwarz inequality, we compute
\begin{align*}
    \| U \|^2_{\NOmegahx} &= (c^2\nabla_{\bx} \NOmegahx U, \nabla_{\bx} \NOmegahx U)_{L^2(\Omega)} = (U, \NOmegahx U)_{L^2(\Omega)}  
    \\ & = (c^2\nabla_{\bx} \NOmega U, \nabla_{\bx} \NOmegahx U)_{L^2(\Omega)} 
    \le \| c\nabla_{\bx} \NOmega U \|_{L^2(\Omega)} \| c\nabla_{\bx} \NOmegahx U \|_{L^2(\Omega)}
    \\ & = \| U \|_{\NOmega} \| U \|_{\NOmegahx}.
\end{align*}

\noindent
For space--time functions~$U \in L^2(Q_T)$, we define $\NOmega_{h_{\bx}} U \in S_{h_{\bx}}(\Omega) \otimes L^2(0,T)$ by
\begin{equation*}
    (c^2 \nabla_{\bx} \NOmegahx U, \nabla_{\bx} V_{h_{\bx}})_{L^2_e(Q_T)} = \int_0^T ( U(\cdot,s), V_{h_{\bx}}(\cdot,s) )_{L^2(\Omega)} \, e^{-s/T} \dd s
\end{equation*}
for all $V_{h_{\bx}} \in S_{h_{\bx}}(\Omega) \otimes L^2(0,T)$, and the seminorm in $L^2(Q_T)$ (norm in $S_{h_{\bx}}(\Omega) \otimes L^2(0,T)$)
\begin{equation*}
    \| U \|_{\NOmegaeh}^2 := \int_0^T \| c \nabla_{\bx}  \NOmegahx U(\cdot,s) \|_{L^2(\Omega)}^2 \, e^{-s/T} \, \dd s.
\end{equation*}
We summarize in the following lemma auxiliary results analogous to those in Lemma~\ref{lem:42}.
\begin{lemma} \label{lem:51}
For all~$U, V\in L^2(Q_T)$, we have
\begin{equation*}
    \| U \|_{\NOmegaeh} \leq \frac{C_\Omega}{c_0} \| U \|_{{L^2_e(Q_T)}}\, , \quad \| \NOmegahx V \|_{{L^2_e(Q_T)}} \leq \frac{C_\Omega}{c_0} \| V \|_{\NOmegaeh},
\end{equation*}
where~$C_\Omega>0$ is the constant in the Poincar\'e inequality, and~$c_0$ the constant in~\eqref{eq:2}.
\end{lemma}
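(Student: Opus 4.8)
The plan is to mirror the proof of Lemma~\ref{lem:42} almost verbatim, replacing the continuous Newton potential $\NOmega$ by its discrete counterpart $\NOmegahx$. The first step is to observe that, since the weight $e^{-s/T}$ factors out of the space--time defining relation and the test space $S_{h_{\bx}}(\Omega)\otimes L^2(0,T)$ is a tensor product, the space--time discrete Newton potential decouples in time: for almost every fixed $s$, $\NOmegahx U(\cdot,s)$ coincides with the spatial discrete Newton potential of $U(\cdot,s)\in L^2(\Omega)$. This reduces both bounds to pointwise-in-time estimates already available for $\NOmegahx$ on $L^2(\Omega)$.

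For the first inequality I would invoke the pointwise bound $\|c\nabla_{\bx}\NOmegahx U(\cdot,s)\|_{L^2(\Omega)}\le (C_\Omega/c_0)\|U(\cdot,s)\|_{L^2(\Omega)}$, recorded just above the lemma (and proved exactly as in Lemma~\ref{lem:41}). Squaring, multiplying by $e^{-s/T}$ and integrating over $(0,T)$ turns the left-hand side into $\|U\|_{\NOmegaeh}^2$ by the definition of the seminorm, and the right-hand side into $(C_\Omega^2/c_0^2)\|U\|_{L^2_e(Q_T)}^2$, which is the claim.

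For the second inequality I would again argue pointwise. Because $\NOmegahx V(\cdot,t)\in S_{h_{\bx}}(\Omega)\subset H^1_0(\Omega)$, the Poincar\'e inequality applies to it, giving $\|\NOmegahx V(\cdot,t)\|_{L^2(\Omega)}\le C_\Omega\|\nabla_{\bx}\NOmegahx V(\cdot,t)\|_{L^2(\Omega)}\le (C_\Omega/c_0)\|c\nabla_{\bx}\NOmegahx V(\cdot,t)\|_{L^2(\Omega)}$, where the last step uses $c\ge c_0$. Squaring, weighting by $e^{-t/T}$ and integrating, the left-hand side becomes $\|\NOmegahx V\|_{L^2_e(Q_T)}^2$ and the right-hand side is exactly $(C_\Omega^2/c_0^2)\|V\|_{\NOmegaeh}^2$ by the definition of the $\NOmegaeh$-seminorm.

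There is essentially no obstacle beyond the continuous case. The argument of Lemma~\ref{lem:42} never used surjectivity of $\NOmega$ onto $H^1_0(\Omega)$, only that its range lies in $H^1_0(\Omega)$ so that the Poincar\'e inequality is applicable; the conforming inclusion $S_{h_{\bx}}(\Omega)\subset H^1_0(\Omega)$ guarantees the same property for $\NOmegahx$. The only genuinely discrete ingredient, the pointwise bound on $\|c\nabla_{\bx}\NOmegahx\,\cdot\|_{L^2(\Omega)}$, has already been established, so the discrete proof is identical to the continuous one.
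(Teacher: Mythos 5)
Your proposal is correct and follows exactly the route the paper intends: the paper omits the proof of Lemma~\ref{lem:51}, presenting it as analogous to Lemma~\ref{lem:42}, and your argument is precisely that analogue, using the pointwise-in-time bound $\| c \nabla_{\bx} \NOmegahx U \|_{L^2(\Omega)} \le (C_\Omega/c_0) \|U\|_{L^2(\Omega)}$ stated just before the lemma together with the Poincar\'e inequality on the conforming range $S_{h_{\bx}}(\Omega) \subset H^1_0(\Omega)$. No gaps.
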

\noindent
For $(U_{\bh}, V_{\bh}) \in (Q_{\bh}(Q_T))^2$ and $(\lambda_{\bh}, \chi_{\bh}) \in (\partial_t Q_{\bh}(Q_T))^2$, we define the following discrete norms
{\small\begin{equation} \label{eq:40}
\begin{aligned} 
    \| (U_{\bh},V_{\bh}) \|^2_{\mathcal{V}_{e,\bh}(Q_T)} & := \| \partial_t U_{\bh} \|_{{L^2_e(Q_T)}}^2 + \| \partial_t V_{\bh} \|^2_{\NOmega_{e,\bh}}  + \| c \nabla_{\bx} U_{\bh} \|^2_{{L^2_e(Q_T)}} + \| V_{\bh} \|^2_{{L^2_e(Q_T)}}, 
    \\  \| (\lambda_{\bh},\chi_{\bh}) \|^2_{\mathcal{W}_{e,\bh}(Q_T)} & := \| \lambda_{\bh} \|^2_{{L^2_e(Q_T)}} + \| \chi_{\bh} \|^2_{\NOmegaeh}.
\end{aligned}
\end{equation}}
\begin{proposition}[Inf-sup in the discrete norms \eqref{eq:40}] \label{prop:52}
For all $(U_{\bh},V_{\bh}) \in (Q_{\bh}(Q_T))^2$ there exists $(\lambda_{\bh},\chi_{\bh}) \in (\partial_t Q_{\bh}(Q_T))^2$ such that
\begin{equation*}
    \frac{\A((U_{\bh},V_{\bh}),(\lambda_{\bh},\chi_{\bh}))}{\| (U_{\bh},V_{\bh}) \|_{\mathcal{V}_{e,\bh}(Q_T)}\| (\lambda_{\bh},\chi_{\bh}) \|_{\mathcal{W}_{e,\bh}(Q_T)}} \ge \frac{1}{2\sqrt{C_\Omega^2/c_0^2+4T^2}}
\end{equation*}
where~$C_\Omega$ is the constant in the Poincar\'e inequality.
\end{proposition}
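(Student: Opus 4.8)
The plan is to mirror the proof of the continuous inf-sup condition in Proposition~\ref{prop:43}, replacing the Newton potential $\NOmega$ by its discrete counterpart $\NOmegahx$ so that the constructed test pair lands in the discrete test space. Concretely, given $(U_{\bh},V_{\bh}) \in (Q_{\bh}(Q_T))^2$, I would set
\[
    \chi_{\bh} := -\partial_t U_{\bh} + 2T\,\partial_t V_{\bh}, \qquad \lambda_{\bh} := \NOmegahx \partial_t V_{\bh} + 2T\,\partial_t U_{\bh},
\]
in exact analogy with~\eqref{eq:34}. The first thing to verify---and the genuinely discrete point of the argument---is that $(\lambda_{\bh},\chi_{\bh}) \in (\partial_t Q_{\bh}(Q_T))^2$. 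Since $U_{\bh},V_{\bh} \in S_{h_{\bx}}(\Omega)\otimes S_{h_t}(0,T)$, their time derivatives lie in $S_{h_{\bx}}(\Omega)\otimes \partial_t S_{h_t}(0,T) = \partial_t Q_{\bh}(Q_T)$, so $\chi_{\bh}$ is admissible; and because $\NOmegahx$ acts only in the spatial variable and maps $S_{h_{\bx}}(\Omega)$ into itself, it preserves the tensor-product time structure, giving $\NOmegahx \partial_t V_{\bh} \in S_{h_{\bx}}(\Omega)\otimes \partial_t S_{h_t}(0,T)$ and hence $\lambda_{\bh} \in \partial_t Q_{\bh}(Q_T)$.

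With the test pair fixed, I would establish the two bounds \eqref{eq:32}--\eqref{eq:33} in the discrete norms~\eqref{eq:40}. For the bound on $\|(\lambda_{\bh},\chi_{\bh})\|_{\mathcal{W}_{e,\bh}(Q_T)}$, I expand the two squared norms and observe that, exactly as in the continuous case, the cross terms cancel: the contribution $4T(\NOmegahx\partial_t V_{\bh},\partial_t U_{\bh})_{L^2_e(Q_T)}$ coming from $\|\lambda_{\bh}\|^2_{L^2_e(Q_T)}$ is matched by $-4T(\partial_t U_{\bh},\partial_t V_{\bh})_{\NOmegaeh}$ from $\|\chi_{\bh}\|^2_{\NOmegaeh}$, using the symmetry of $\NOmegahx$ and its defining relation~\eqref{eq:38} to identify $(\NOmegahx\partial_t V_{\bh},\partial_t U_{\bh})_{L^2_e(Q_T)} = (\partial_t U_{\bh},\partial_t V_{\bh})_{\NOmegaeh}$. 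What remains is $\|\NOmegahx\partial_t V_{\bh}\|^2_{L^2_e(Q_T)} + \|\partial_t U_{\bh}\|^2_{\NOmegaeh} + 4T^2(\|\partial_t U_{\bh}\|^2_{L^2_e(Q_T)} + \|\partial_t V_{\bh}\|^2_{\NOmegaeh})$, and applying the two estimates of Lemma~\ref{lem:51} bounds this by $(C_\Omega^2/c_0^2 + 4T^2)\,\|(U_{\bh},V_{\bh})\|^2_{\mathcal{V}_{e,\bh}(Q_T)}$, i.e.\ $C_1^2 = C_\Omega^2/c_0^2 + 4T^2$.

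For the coercivity-type bound, I would insert $(\lambda_{\bh},\chi_{\bh})$ into $\A$ and compute as in~\eqref{eq:35}. Two identities drive the computation. First, since $U_{\bh}(\cdot,0)=V_{\bh}(\cdot,0)=0$ (because $S_{h_t}(0,T)\subset H^1_{0,\bullet}(0,T)$), the weighted integration-by-parts identity~\eqref{eq:9} applied in time produces the terms $\|c\nabla_{\bx}U_{\bh}\|^2_{L^2_e(Q_T)}$ and $\|V_{\bh}\|^2_{L^2_e(Q_T)}$ together with the nonnegative endpoint contributions $\tfrac{T}{e}\|c\nabla_{\bx}U_{\bh}(\cdot,T)\|^2_{L^2(\Omega)}$ and $\tfrac{T}{e}\|V_{\bh}(\cdot,T)\|^2_{L^2(\Omega)}$. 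Second, and this is where discreteness is essential, I use that $U_{\bh}(\cdot,s)\in S_{h_{\bx}}(\Omega)$ is an admissible test function in the defining relation~\eqref{eq:38} of $\NOmegahx$, which gives $(U_{\bh},\partial_t V_{\bh})_{L^2_e(Q_T)} = (c^2\nabla_{\bx}U_{\bh},\nabla_{\bx}\NOmegahx\partial_t V_{\bh})_{L^2_e(Q_T)}$---the discrete substitute for the step using~\eqref{eq:25} in Proposition~\ref{prop:43}. After the $\pm 2T(\partial_t U_{\bh},\partial_t V_{\bh})_{L^2_e(Q_T)}$ terms cancel, I bound the two remaining indefinite terms by Young's inequality, namely $(U_{\bh},\partial_t V_{\bh})_{L^2_e(Q_T)}\ge -\tfrac12\|c\nabla_{\bx}U_{\bh}\|^2_{L^2_e(Q_T)} - \tfrac12\|\partial_t V_{\bh}\|^2_{\NOmegaeh}$ and $-(V_{\bh},\partial_t U_{\bh})_{L^2_e(Q_T)}\ge -\tfrac12\|V_{\bh}\|^2_{L^2_e(Q_T)} - \tfrac12\|\partial_t U_{\bh}\|^2_{L^2_e(Q_T)}$, and discard the nonnegative endpoint terms, arriving at $\A((U_{\bh},V_{\bh}),(\lambda_{\bh},\chi_{\bh})) \ge \tfrac12\|(U_{\bh},V_{\bh})\|^2_{\mathcal{V}_{e,\bh}(Q_T)}$, so $C_2 = 1/2$.

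Combining the two bounds yields the inf-sup constant $C_2/C_1 = 1/(2\sqrt{C_\Omega^2/c_0^2 + 4T^2})$, as claimed. I expect no serious obstacle beyond the bookkeeping, since the whole argument is structurally identical to Proposition~\ref{prop:43}; the one point that genuinely requires care---and the reason the construction uses $\NOmegahx$ rather than $\NOmega$---is the conformity check that $\lambda_{\bh}\in\partial_t Q_{\bh}(Q_T)$ together with the legitimacy of using $U_{\bh}$ itself as a test function in~\eqref{eq:38}. Both hinge on the tensor-product structure of the discrete spaces and on $\NOmegahx$ being a purely spatial operator with range in $S_{h_{\bx}}(\Omega)$.
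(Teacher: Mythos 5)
Your proposal is correct and follows exactly the route the paper takes: the paper's proof of Proposition~\ref{prop:52} simply declares itself identical to that of Proposition~\ref{prop:43} with the test pair $\chi_{\bh}=-\partial_t U_{\bh}+2T\partial_t V_{\bh}$, $\lambda_{\bh}=\NOmegahx\partial_t V_{\bh}+2T\partial_t U_{\bh}$ and the bounds of Lemma~\ref{lem:51}, which is precisely what you carry out in detail. Your added verifications (conformity of $\lambda_{\bh}$ via the tensor-product structure, and the use of $U_{\bh}$ as an admissible test function in~\eqref{eq:38}) are exactly the points the paper leaves implicit, and they are handled correctly.
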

\begin{proof}
The proof is identical to that of Proposition~\ref{prop:43}, using the bounds in Lemma~\ref{lem:51}. 
The test functions to be used here are
\begin{equation*}
    \chi_{\bh} = - \partial_t U_{\bh} + 2T \partial_t V_{\bh}, \quad \lambda_{\bh} = \NOmegahx \partial_t V_{\bh} + 2T \partial_t U_{\bh}.
\end{equation*}
The result then follows by the same argument as in Proposition~\ref{prop:43}.
\end{proof}
\noindent
As a consequence, we have well-posedness of the discrete formulation in~\eqref{eq:37}. 
\begin{corollary} \label{cor:53}
Let the source satisfy $F \in L^2(Q_T)$. Then, there exists a unique solution to problem \eqref{eq:37}, and it satisfies
\begin{equation*}
    \| (U_{\bh},V_{\bh}) \|_{\mathcal{V}_{e,\bh}} \le 2\sqrt{C_\Omega^2/c_0^2 + 4T^2} \bigl(\| F\|_{{L^2_e(Q_T)}} +\| \nabla_{\bx} \cdot (c \nabla_{\bx} U_0)\|_{L^2(\Omega)} + \| c\nabla_{\bx} V_0\|_{L^2(\Omega)}\bigr).
\end{equation*}
\end{corollary}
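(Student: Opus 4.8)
The plan is to treat~\eqref{eq:37} as a square finite-dimensional linear system and to deduce well-posedness from the discrete inf-sup condition of Proposition~\ref{prop:52} alone, thereby circumventing the difficulty recorded before Theorem~\ref{thm:47}, where Ne\v{c}as' theorem could not be applied because continuity of $\A$ only holds in a norm stronger than the inf-sup norm. The first step is to check that the trial and test spaces have the same finite dimension. Since $S_{h_t}(0,T)\subset H^1_{0,\bullet}(0,T)$, every time-discrete function vanishes at $t=0$, so $\partial_t$ is injective on $S_{h_t}(0,T)$: a vanishing derivative forces a constant, and the zero initial value forces that constant to vanish. Hence $\dim\partial_t S_{h_t}(0,T)=\dim S_{h_t}(0,T)$, and consequently $\dim(\partial_t Q_{\bh}(Q_T))^2=\dim(Q_{\bh}(Q_T))^2$.

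Next I would establish well-posedness. I first note that $\|\cdot\|_{\mathcal{V}_{e,\bh}}$ is a genuine norm on $(Q_{\bh}(Q_T))^2$: if it vanishes, then in particular $\partial_t U_{\bh}=0$ and $V_{\bh}=0$ in ${L^2_e(Q_T)}$, and the injectivity of $\partial_t$ (together with $U_{\bh}$ vanishing at $t=0$) gives $U_{\bh}=0$. For a square system, injectivity of the associated matrix is equivalent to invertibility. The inf-sup condition supplies this injectivity directly: if $\A((U_{\bh},V_{\bh}),(\lambda_{\bh},\chi_{\bh}))=0$ for all admissible test pairs, the left-hand side of the inf-sup inequality forces $\|(U_{\bh},V_{\bh})\|_{\mathcal{V}_{e,\bh}}=0$, whence $(U_{\bh},V_{\bh})=0$. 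Existence and uniqueness of the discrete solution follow.

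For the stability bound I would proceed exactly as in the proof of Theorem~\ref{thm:47}, but with the discrete objects. Starting from Proposition~\ref{prop:52},
\[
\frac{1}{2\sqrt{C_\Omega^2/c_0^2+4T^2}}\,\|(U_{\bh},V_{\bh})\|_{\mathcal{V}_{e,\bh}}\le\sup_{0\ne(\lambda_{\bh},\chi_{\bh})}\frac{\A((U_{\bh},V_{\bh}),(\lambda_{\bh},\chi_{\bh}))}{\|(\lambda_{\bh},\chi_{\bh})\|_{\mathcal{W}_{e,\bh}}},
\]
I would substitute the discrete equation~\eqref{eq:37} to replace the numerator by the data functional, and then bound that functional by $\|(\lambda_{\bh},\chi_{\bh})\|_{\mathcal{W}_{e,\bh}}$ times the data norms. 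Concretely: integrate by parts in space in the $U_0$-term to produce $(\nabla_{\bx}\cdot(c^2\nabla_{\bx}U_0),\lambda_{\bh})_{{L^2_e(Q_T)}}$; rewrite the $V_0$-term through the discrete Newton potential as $(c\nabla_{\bx}V_0,c\nabla_{\bx}\NOmegahx\chi_{\bh})_{{L^2_e(Q_T)}}$; and apply the Cauchy-Schwarz inequality together with the identity $\|c\nabla_{\bx}\NOmegahx\chi_{\bh}\|_{{L^2_e(Q_T)}}=\|\chi_{\bh}\|_{\NOmegaeh}$ built into the discrete norm~\eqref{eq:40}. The $U_0$- and $V_0$-contributions, being time-independent, are integrated against $e^{-t/T}$, producing a factor $\int_0^T e^{-t/T}\,\dd t\le T$; this is also where the regularity $\nabla_{\bx}\cdot(c^2\nabla_{\bx}U_0)\in L^2(\Omega)$ and $V_0\in H^1_0(\Omega)$ implicit in the right-hand side enters.

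The main obstacle, and the conceptual heart of the argument, is precisely the passage the continuous theorem had to avoid: turning an inf-sup (injectivity) statement into existence. In the discrete setting the equal-dimension observation rescues the argument, since on a square system injectivity is automatically surjectivity, so the single estimate of Proposition~\ref{prop:52} yields existence, uniqueness, and stability simultaneously. Verifying that $\partial_t$ does not lower the dimension of the time-discrete space is thus the small but essential structural fact underpinning the whole corollary, and I would flag it explicitly rather than leave it as routine.
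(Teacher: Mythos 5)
Your proposal matches the paper's proof: the paper likewise obtains existence and stability from the discrete inf-sup condition of Proposition~\ref{prop:52}, invokes the squareness of the finite-dimensional system for uniqueness, and derives the bound exactly as in Theorem~\ref{thm:47}; your explicit check that $\partial_t$ preserves the dimension of $S_{h_t}(0,T)$ merely makes precise what the paper leaves implicit in calling the system ``square''. One small remark: repeating the computation of Theorem~\ref{thm:47}, as you (and the paper) do, actually produces the factors $\sqrt{T}$ in front of the $U_0$- and $V_0$-terms, which appear in Theorem~\ref{thm:47} but are missing from the stated bound of the corollary.
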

\begin{proof}
The existence of solutions and the stability estimate follow from the inf-sup condition of Proposition~\ref{prop:52}, while uniqueness follows from existence and the fact that~\eqref{eq:37} is a square, finite dimensional linear system. The stability estimate is derived exactly as in Theorem \ref{thm:47}.
\end{proof}
\begin{remark}
The use of exponential weights allows us to prove stability for a general class of conforming tensor product discrete spaces, under the minimal assumption that the trial space is the time derivative of the test space. From a practical point of view, the presence of the exponential weights in the discrete formulation may be unnecessary. This is the case for continuous piecewise polynomial approximations in time~\cite{FrenchPeterson1996,Walkington2014,DongMascottoWang2024, Gomez2025}, where discrete stability is proven by selecting test functions containing a weight that mimics an exponential function. Whether this strategy can be extended to more general discrete spaces remains an open question.
\eremk \end{remark}

\subsection{Error estimates}

\noindent
In this section, we derive error estimates for the solution to~\eqref{eq:37} in the norm~$\| \cdot \|_{\mathcal{V}_{e,\bh}(Q_T)}$ defined in~\eqref{eq:40}. We observe that the norm~$\| \cdot \|_{\mathcal{V}_{e}(Q_T)}$ and the discrete norm~$\| \cdot \|_{\mathcal{V}_{e,\bh}(Q_T)}$ differ solely in the terms~$\|\cdot\|_{\NOmegae}$ and~$\|\cdot\|_{\NOmega_{e,\bh}}$, while all other terms coincide. In this analysis, we use similar techniques as in~\cite{DongMascottoWang2024} and~\cite{Gomez2025}.

\noindent
We start by introducing projector operators and their properties in Section~\ref{sec:521}, then we derive convergence rates in Section~\ref{sec:522} under standard assumptions on the discrete spaces.

\subsubsection{Projection operators} \label{sec:521}
We introduce the following
projection operators:\medskip

\begin{itemize}[leftmargin=1.5em]
    \item (elliptic projector in space)
$\Pg : L^2(0,T;H^1(\Omega)) \to S_{h_{\bx}}(\Omega) \otimes L^2(0,T)$ defined by
\begin{equation} \label{eq:41}
    (c^2 \nabla_{\bx} (\Pg - \Id) W, \nabla_{\bx} W_{h_{\bx}})_{L_e^2(Q_T)} = 0 \quad \text{for all~} W_{h_{\bx}} \in S_{h_{\bx}}(\Omega) \otimes L^2(0,T),
\end{equation}
\item (elliptic projector in time)
$\Pt : H^1(0,T;L^2(\Omega)) \to L^2(\Omega) \otimes S_{h_t}(0,T)$ defined by 
\begin{equation} \label{eq:42}
    (\partial_t (\Pt-\Id) W, \chi_{h_t})_{L^2_e(Q_T)} = 0 \quad \text{for all~} \chi_{h_t} \in L^2(\Omega) \otimes \partial_t S_{h_t}(0,T).
\end{equation}
\end{itemize}
\begin{remark}
We note that the right-hand side of~\eqref{eq:37} can also be written as
\begin{equation*}
    (\Pi^0_{\bh} F,\lambda_{\bh})_{L^2_e(Q_T)} - (c^2 \nabla_{\bx} (\Pg U_{0}), \nabla_{\bx} \lambda_{\bh})_{L^2(\Omega)} - (\Px V_0,\chi_{\bh})_{L^2(\Omega)},
\end{equation*}
where~$\Pi^0_{\bh}$ denotes the $L^2_e(Q_T)$ projection operator into~$\partial_t Q_{\bh}(Q_T)$, and~$\Px$ denotes the~$L^2(\Omega)$ projection operator into~$S_{h_{\bx}}(\Omega)$.
\eremk \end{remark}
\noindent
In the following lemma, we collect useful properties of these operators (see, e.g., \cite[\S 2]{AzizMonk1989} and \cite[Lemma 10]{SteinbachZank2019}).
\begin{lemma} \label{lem:55}
The following composition of projectors is well defined and commute:
\begin{align*}
    \Pg \Pt & = \Pt \Pg : H^1(0,T;H^1(\Omega)) \to Q_{\bh}(Q_T).
\end{align*}
Moreover, the differential operators and projectors commute:
\begin{align*}
    \partial_t \Pg W  = \Pg \partial_t W, \quad\quad \nabla_{\bx} \Pt W = \Pt \nabla_{\bx} W \quad\quad\quad \text{for all~} W \in H^1(0,T;H^1(\Omega)).
\end{align*}
Furthermore, the projectors satisfy standard approximation properties:
\begin{itemize}[leftmargin=1.5em]
\item 
for all $W \in L^2(0,T;H^1(\Omega))$,
\begin{equation} \label{eq:43}
    \| c \nabla_{\bx} (\Pg -\Id) W\|_{L_e^2(Q_T)} = \inf_{W_{h_{\bx}} \in S_{h_{\bx}}(\Omega) \otimes L^2(0,T)} \| c\nabla_{\bx} (W_{h_{\bx}} - W) \|_{L_e^2(Q_T)} \,\,\,  
\end{equation}
\item 
for all $W \in H^1(0,T;L^2(\Omega))$,
\begin{equation} \label{eq:44}
    \| \partial_t (\Pt- \Id) W \|_{L^2_e(Q_T)} = \inf_{W_{h_t} \in L^2(\Omega) \otimes S_{h_t}(0,T)} \| \partial_t(W_{h_t} - W) \|_{L_e^2(Q_T)}. 
\end{equation}
\end{itemize}
Finally, we have the following stability estimates: 
\begin{align}
    \| c \nabla_{\bx} \Pg W\|_{L_e^2(Q_T)} & \le 
    \| c\nabla_{\bx} W \|_{L_e^2(Q_T)} \quad\,\,\,\, \text{for all~} W \in L^2(0,T;H^1(\Omega)), \label{eq:45}
    \\ \| \partial_t \Pt W \|_{{L^2_e(Q_T)}} & \le 
    \| \partial_t W \|_{{L^2_e(Q_T)}} \quad\ \ \,\,\,\,\,  \text{for all~} W \in H^1(0,T;L^2(\Omega)). \label{eq:46}
\end{align}
\end{lemma}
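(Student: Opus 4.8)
The plan is to reduce every assertion to elementary Hilbert-space projection theory, after first showing that each of the two space--time projectors acts \emph{fibre-wise}, i.e.\ is a genuine tensor-product operator. For $\Pg$, I insert into~\eqref{eq:41} a separated test function $W_{h_{\bx}}(\bx,t)=\phi(\bx)\psi(t)$ with $\phi\in S_{h_{\bx}}(\Omega)$ fixed and $\psi\in L^2(0,T)$ arbitrary; since the weight $e^{-t/T}$ is strictly positive, this forces $(c^2\nabla_{\bx}(\Pg W-W)(\cdot,t),\nabla_{\bx}\phi)_{L^2(\Omega)}=0$ for almost every $t$, so that the time slice $(\Pg W)(\cdot,t)$ equals the $t$-independent elliptic projection $P_{h_{\bx}}W(\cdot,t)$ of $W(\cdot,t)$ onto $S_{h_{\bx}}(\Omega)$ in the inner product $(c^2\nabla_{\bx}\cdot,\nabla_{\bx}\cdot)_{L^2(\Omega)}$. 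An identical argument with $\chi_{h_t}(\bx,t)=\phi(\bx)\partial_t\psi(t)$ in~\eqref{eq:42} shows that $(\Pt W)(\bx,\cdot)=P_{h_t}W(\bx,\cdot)$, where $P_{h_t}\colon H^1(0,T)\to S_{h_t}(0,T)$ is the elliptic-in-time projection defined by $(\partial_t(P_{h_t}u-u),\partial_t\psi)_{L^2_e(0,T)}=0$ for all $\psi\in S_{h_t}(0,T)$; this is well defined because $\|\partial_t\cdot\|_{L^2_e(0,T)}$ is a genuine norm on $S_{h_t}(0,T)\subset H^1_{0,\bullet}(0,T)$. Thus $\Pg=P_{h_{\bx}}\otimes\Id$ acts only in space and $\Pt=\Id\otimes P_{h_t}$ acts only in time.

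The commutation relations then reduce to statements about operators acting on disjoint variables. As $P_{h_{\bx}}$ is a fixed bounded linear map on $H^1(\Omega)$, it commutes with the Bochner time derivative, so $\partial_t\Pg W=P_{h_{\bx}}\partial_t W=\Pg\partial_t W$ for $W\in H^1(0,T;H^1(\Omega))$; symmetrically $P_{h_t}$ acts boundedly and linearly in time and hence commutes with $\nabla_{\bx}$, giving $\nabla_{\bx}\Pt W=\Pt\nabla_{\bx}W$. For the composition, the fibre-wise description shows that $P_{h_t}$ preserves the spatial $H^1(\Omega)$-regularity and $P_{h_{\bx}}$ preserves the (finite-dimensional) time regularity, so $\Pt W\in H^1(0,T;H^1(\Omega))$ lies in the domain of $\Pg$ and $\Pg W\in H^1(0,T;S_{h_{\bx}}(\Omega))$ lies in the domain of $\Pt$. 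The composition is therefore well defined, and the tensor identity $\Pg\Pt=P_{h_{\bx}}\otimes P_{h_t}=\Pt\Pg$, with range $S_{h_{\bx}}(\Omega)\otimes S_{h_t}(0,T)=Q_{\bh}(Q_T)$, follows at once.

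It remains to record the approximation and stability properties, which are the standard consequences of $\Pg$ and $\Pt$ being orthogonal projections in the seminorms $\|c\nabla_{\bx}\cdot\|_{L^2_e(Q_T)}$ and $\|\partial_t\cdot\|_{L^2_e(Q_T)}$. For~\eqref{eq:43}, the Galerkin orthogonality~\eqref{eq:41} yields, for every $W_{h_{\bx}}\in S_{h_{\bx}}(\Omega)\otimes L^2(0,T)$, the Pythagorean identity $\|c\nabla_{\bx}(W-W_{h_{\bx}})\|_{L^2_e(Q_T)}^2=\|c\nabla_{\bx}(W-\Pg W)\|_{L^2_e(Q_T)}^2+\|c\nabla_{\bx}(\Pg W-W_{h_{\bx}})\|_{L^2_e(Q_T)}^2$, whence the infimum is attained at $W_{h_{\bx}}=\Pg W$; estimate~\eqref{eq:44} follows in the same way from~\eqref{eq:42}. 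Setting the discrete argument equal to zero in these two identities and discarding the non-negative best-approximation term gives the stability bounds~\eqref{eq:45} and~\eqref{eq:46}, since an orthogonal projection never increases its governing seminorm.

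I expect the only genuinely delicate point to be the rigorous justification of the fibre-wise characterizations and the ensuing commutation with $\partial_t$ and $\nabla_{\bx}$. One must argue, through a countable dense family of test directions $\phi$ and the strict positivity of $e^{-t/T}$, that the weighted space--time orthogonality in~\eqref{eq:41} and~\eqref{eq:42} localizes to the claimed pointwise projection in the complementary variable, and then invoke the commutation of a bounded linear operator in one variable with differentiation in the other in the Bochner sense. Once this tensor-product structure is secured, all the remaining claims are immediate.
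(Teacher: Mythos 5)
Your proof is correct. The paper itself does not prove Lemma~\ref{lem:55}; it delegates these facts to \cite[\S 2]{AzizMonk1989} and \cite[Lemma 10]{SteinbachZank2019}, and the argument you give — localizing the weighted Galerkin orthogonality in \eqref{eq:41} and \eqref{eq:42} via separated test functions to exhibit $\Pg=P_{h_{\bx}}\otimes\Id$ and $\Pt=\Id\otimes P_{h_t}$, then reading off commutation, the Pythagorean identities \eqref{eq:43}--\eqref{eq:44}, and the stability bounds \eqref{eq:45}--\eqref{eq:46} — is exactly the standard tensor-product projector argument those references rely on. The one point worth making explicit in a final write-up is that $\|\partial_t\cdot\|_{L^2_e(0,T)}$ is a norm on $S_{h_t}(0,T)\subset H^1_{0,\bullet}(0,T)$ (and $\|c\nabla_{\bx}\cdot\|_{L^2(\Omega)}$ on $S_{h_{\bx}}(\Omega)\subset H^1_0(\Omega)$), which you do note and which is what makes both fibre projections well defined.
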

\noindent
As a consequence of the inf-sup condition established in Proposition~\ref{prop:43}, we derive the following auxiliary result.
\begin{lemma} \label{lem:56}
Assume the regularity on the data as in Assumption~\ref{ass:21}. Let~$(U,V)$ be the unique solution to \eqref{eq:36}, whose regularity is stated in Proposition~\ref{prop:22}, and let~$(U_{\bh},V_{\bh})$ be the unique discrete solution to \eqref{eq:37}. Suppose also that
\begin{equation} \label{eq:47}
    \nabla_{\bx} \cdot (c^2 \nabla_{\bx} U) \in H^1(0,T;L^2(\Omega)).
\end{equation}
Then, it holds 
\begin{align*}
    \| (\Pg \Pt U- U_{\bh}, \Pg \Pt V - V_{\bh})\|_{\mathcal{V}_{e,\bh}(Q_T)} \le 
     & \beta\bigl(\| (\Pg-\Id) \partial_t V \|_{{L^2_e(Q_T)}} 
    \\ & \hspace{-5cm} +  \| (\Pt - \Id) \nabla_{\bx} \cdot (c^2 \nabla_{\bx} U) \|_{L^2_e(Q_T)} 
    + \| c \nabla_{\bx} \Pg (\Pt - \Id) V \|_{L^2_e(Q_T)}\bigr),
\end{align*}
where $\beta := 2\sqrt{C_\Omega^2/c_0^2 + 4T^2}$ is the reciprocal of the inf-sup constant in Proposition \ref{prop:52}.
\end{lemma}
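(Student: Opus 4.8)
The plan is to measure the error through the projected quantity $\Pg\Pt(U,V)$ and then invoke the discrete inf-sup condition. I set $e_{\bh}:=(\Pg\Pt U-U_{\bh},\,\Pg\Pt V-V_{\bh})$, which by Lemma~\ref{lem:55} belongs to $(Q_{\bh}(Q_T))^2$. Applying Proposition~\ref{prop:52} to $e_{\bh}$ yields a pair $(\lambda_{\bh},\chi_{\bh})\in(\partial_t Q_{\bh}(Q_T))^2$ with
\[
\| e_{\bh} \|_{\mathcal{V}_{e,\bh}(Q_T)}\le\beta\,\frac{\A(e_{\bh},(\lambda_{\bh},\chi_{\bh}))}{\|(\lambda_{\bh},\chi_{\bh})\|_{\mathcal{W}_{e,\bh}(Q_T)}}.
\]
Since the exact solution solves~\eqref{eq:36} and the discrete one solves~\eqref{eq:37} with the same right-hand side, and $(\partial_t Q_{\bh}(Q_T))^2$ is admissible for both, Galerkin orthogonality gives $\A((U-U_{\bh},V-V_{\bh}),(\lambda_{\bh},\chi_{\bh}))=0$, whence $\A(e_{\bh},(\lambda_{\bh},\chi_{\bh}))=\A((\eta_U,\eta_V),(\lambda_{\bh},\chi_{\bh}))$ with $\eta_U:=\Pg\Pt U-U$ and $\eta_V:=\Pg\Pt V-V$. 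The task then reduces to bounding the four terms of $\A((\eta_U,\eta_V),(\lambda_{\bh},\chi_{\bh}))$ by the three quantities in the statement, each multiplied by $\|\lambda_{\bh}\|_{L^2_e(Q_T)}$ or $\|\chi_{\bh}\|_{\NOmegaeh}$, and finally using $\|\lambda_{\bh}\|_{L^2_e(Q_T)},\|\chi_{\bh}\|_{\NOmegaeh}\le\|(\lambda_{\bh},\chi_{\bh})\|_{\mathcal{W}_{e,\bh}(Q_T)}$.

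For the two terms paired with $\lambda_{\bh}$, I would use the defining properties of the projectors to discard the discrete contributions. In $(c^2\nabla_{\bx}\eta_U,\nabla_{\bx}\lambda_{\bh})_{L^2_e(Q_T)}$, splitting $\eta_U=(\Pg-\Id)\Pt U+(\Pt-\Id)U$ annihilates the first summand via~\eqref{eq:41} (as $\lambda_{\bh}\in S_{h_{\bx}}(\Omega)\otimes L^2(0,T)$); integrating the second by parts in space, the boundary term vanishing since $\lambda_{\bh}$ is $H^1_0$ in space, and commuting $\nabla_{\bx}\cdot(c^2\nabla_{\bx}\Pt U)=\Pt\nabla_{\bx}\cdot(c^2\nabla_{\bx}U)$ turns it into $-((\Pt-\Id)\nabla_{\bx}\cdot(c^2\nabla_{\bx}U),\lambda_{\bh})_{L^2_e(Q_T)}$, which is legitimate under~\eqref{eq:47} and gives the second term of the bound by Cauchy--Schwarz. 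For $\langle\partial_t\eta_V,\lambda_{\bh}\rangle_e$, which equals $(\partial_t\eta_V,\lambda_{\bh})_{L^2_e(Q_T)}$ since $\partial_t\eta_V\in L^2(Q_T)$, I write $\partial_t\eta_V=(\Pg-\Id)\partial_t\Pt V+\partial_t(\Pt-\Id)V$: the second summand is annihilated by~\eqref{eq:42}, while the first, rewritten through the commutations of Lemma~\ref{lem:55} as $\partial_t\Pt(\Pg-\Id)V$, is controlled by the stability~\eqref{eq:46} by $\|(\Pg-\Id)\partial_t V\|_{L^2_e(Q_T)}$, the first term of the bound.

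The two terms paired with $\chi_{\bh}$ are the crux, and here the identity $V=\partial_t U$ from Theorem~\ref{thm:47} is essential. Combining them and using $\partial_t U-V=0$, the quantity $-\partial_t\eta_U+\eta_V$ simplifies to $\Pg\bigl((\Pt-\Id)V-\partial_t(\Pt-\Id)U\bigr)$; the contribution $\Pg\partial_t(\Pt-\Id)U=\partial_t(\Pt-\Id)\Pg U$ is annihilated by $\chi_{\bh}$ through~\eqref{eq:42}, leaving exactly $(\Pg(\Pt-\Id)V,\chi_{\bh})_{L^2_e(Q_T)}$. Since $\Pg(\Pt-\Id)V\in S_{h_{\bx}}(\Omega)\otimes L^2(0,T)$, testing the defining relation~\eqref{eq:38} of the discrete Newton potential against it gives
\[
(\Pg(\Pt-\Id)V,\chi_{\bh})_{L^2_e(Q_T)}=(c^2\nabla_{\bx}\NOmegahx\chi_{\bh},\nabla_{\bx}\Pg(\Pt-\Id)V)_{L^2_e(Q_T)},
\]
and Cauchy--Schwarz with $\|c\nabla_{\bx}\NOmegahx\chi_{\bh}\|_{L^2_e(Q_T)}=\|\chi_{\bh}\|_{\NOmegaeh}$ produces the third term of the bound times $\|\chi_{\bh}\|_{\NOmegaeh}$.

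Assembling the three estimates and dividing by $\|(\lambda_{\bh},\chi_{\bh})\|_{\mathcal{W}_{e,\bh}(Q_T)}$ yields the claim with constant $\beta$. I expect the main obstacle to be the precise bookkeeping of which projector property cancels which summand, in particular the systematic use of $V=\partial_t U$ to eliminate the $\partial_t U$ contribution and of~\eqref{eq:42} to remove the time-elliptic remainders tested against $\lambda_{\bh}$ and $\chi_{\bh}$, together with verifying that every commutation, integration by parts, and duality pairing is justified by the regularity in Proposition~\ref{prop:22} and assumption~\eqref{eq:47} (notably that $\partial_t\eta_U,\partial_t\eta_V\in L^2(Q_T)$, so the duality pairings reduce to weighted $L^2$ products).
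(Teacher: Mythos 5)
Your proposal is correct and follows essentially the same route as the paper: discrete inf-sup applied to $(\Pg\Pt U-U_{\bh},\Pg\Pt V-V_{\bh})$, Galerkin orthogonality, then term-by-term cancellation via \eqref{eq:41}, \eqref{eq:42}, the commutation properties of Lemma~\ref{lem:55}, integration by parts in space under \eqref{eq:47}, the identity $V=\partial_t U$, and the discrete Newton potential \eqref{eq:38} for the combined $\chi_{\bh}$-terms. The only cosmetic deviation is that you bound the $\lambda_{\bh}$-paired time-projection remainder via the stability estimate \eqref{eq:46} where the paper cancels it exactly with \eqref{eq:42}; both give the same final constant.
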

\begin{proof}
We use Proposition \ref{prop:52} and Galerkin orthogonality to first obtain, for any $(W_{\bh},Z_{\bh}) \in (Q_{\bh}(Q_T))^2$,
\begin{equation} \label{eq:48}
\begin{aligned}
    & \| (W_{\bh}-U_{\bh},Z_{\bh}-V_{\bh})\|_{\mathcal{V}_{e,\bh}(Q_T)} 
    \\ & \quad\quad \le \beta \sup_{0\ne(\lambda_{\bh},\chi_{\bh}) \in (\partial_t Q_{\bh}(Q_T))^2} \frac{\A((W_{\bh}- U_{\bh},Z_{\bh} - V_{\bh}),(\lambda_{\bh},\chi_{\bh}))}{\| (\lambda_{\bh},\chi_{\bh}) \|_{\mathcal{W}_{e,\bh}(Q_T)}} 
    \\ & \quad\quad = \beta \sup_{0\ne(\lambda_{\bh},\chi_{\bh}) \in (\partial_t Q_{\bh}(Q_T))^2} \frac{\A((W_{\bh}- U,Z_{\bh} - V),(\lambda_{\bh},\chi_{\bh}))}{\| (\lambda_{\bh},\chi_{\bh}) \|_{\mathcal{W}_{e,\bh}(Q_T)}} 
\end{aligned}
\end{equation}
where $\beta = 2\sqrt{C_\Omega^2/c_0^2 + 4T^2}$. Using the definition of~$\A$ in \eqref{eq:23}, we write
\begin{equation} \label{eq:49}
\begin{aligned}
    &  \hspace{-2cm} \A((W_{\bh}- U, Z_{\bh} - V),(\lambda_{\bh},\chi_{\bh})) 
    \\ & = (\partial_t (Z_{\bh} - V), \lambda_{\bh})_{L^2_e(Q_T)}  + (c^2\nabla_{\bx} (W_{\bh}-U), \nabla_{\bx} \lambda_{\bh})_{L^2_e(Q_T)} 
    \\ & \quad - (\partial_t (W_{\bh}-U), \chi_{\bh})_{L^2_e(Q_T)} + ((Z_{\bh}-V), \chi_{\bh})_{L^2_e(Q_T)}
    \\ &  =: I_1 + I_2 + I_3 + I_4.
\end{aligned}
\end{equation}
Now, we take~$W_{\bh} = \Pi^\nabla_{h_{\bx}} \Pt U$ and $Z_{\bh} = \Pi^\nabla_{h_{\bx}} \Pt V$. Then, using the commutativity properties in Lemma~\ref{lem:55}, together with~\eqref{eq:42}, we obtain 
\begin{equation*}
\begin{aligned}
    I_1 &= (\partial_t (\Pg \Pt V - V), \lambda_{\bh})_{L^2_e(Q_T)}  = (\partial_t (\Pt \Pg V - V), \lambda_{\bh})_{L^2_e(Q_T)}
    \\ & = (\partial_t ( \Pg V - V), \lambda_{\bh})_{L^2_e(Q_T)} 
     = ((\Pg - \Id) \partial_t V), \lambda_{\bh})_{L^2_e(Q_T)},
\end{aligned}
\end{equation*}
from which, using the Cauchy-Schwarz inequality, we obtain
\begin{equation} \label{eq:50}
\begin{aligned}
    I_1 & 
     \le \|(\Pg - \text{Id}) \partial_t V \|_{L^2_e(Q_T)} \| (\lambda_{\bh},\chi_{\bh})\|_{\mathcal{W}_{e,\bh}(Q_T)}. 
\end{aligned}
\end{equation}
Moreover, with~\eqref{eq:41} and integration by parts, where the regularity assumption in~\eqref{eq:47} is used, we deduce
\begin{align*}
    I_2 &= (c^2 \nabla_{\bx} (\Pi^\nabla_{h_{\bx}} \Pt U - U), \nabla_{\bx} \lambda_{\bh})_{L^2_e(Q_T)}  = (c^2 \nabla_{\bx} (\Pt U - U), \nabla_{\bx} \lambda_{\bh})_{L^2_e(Q_T)}
    \\ & = - (\nabla_{\bx} \cdot(c^2 \nabla_{\bx} (\Pt U - U)),\lambda_{\bh})_{L^2_e(Q_T)}
     = - ((\Pt - \text{Id})\nabla_{\bx} \cdot(c^2 \nabla_{\bx} U), \lambda_{\bh})_{L^2_e(Q_T)},
\end{align*}
where, in the last step, we used the commutativity properties in Lemma~\ref{lem:55}. Using again the Cauchy-Schwarz inequality, we obtain
\begin{equation} \label{eq:51}
\begin{aligned}
    I_2 \le \|(\Pt - \text{Id}) \nabla_{\bx} \cdot (c^2 \nabla_{\bx} U) \|_{L^2_e(Q_T)} \| (\lambda_{\bh},\chi_{\bh})\|_{\mathcal{W}_{e,\bh}(Q_T)}. 
     \end{aligned}
\end{equation}
For~$I_3$, starting as for~$I_1$ and using that $(\partial_t U,\chi_{\bh})_{L^2_e(Q_T)} = (V,\chi_{\bh})_{L^2_e(Q_T)}$, we have
\begin{equation*}
\begin{aligned}
    I_3  
    = - ((\Pg  - \Id) \partial_t U, \chi_{\bh})_{L^2_e(Q_T)}
    = - ((\Pg  - \Id) V, \chi_{\bh})_{L^2_e(Q_T)}.
    \end{aligned}
\end{equation*}
We proceed by estimating~$I_3 + I_4$ jointly, which allows to take advantage of the cancellation of the terms~$(V, \chi_{\bh})_{L^2_e(Q_T)}$. Using the definition \eqref{eq:38} of~$\mathcal{N}_{h_{\bx}}^\Omega$, together with the Cauchy-Schwarz inequality, gives
\begin{align*}
    I_3 + I_4 = (\Pg (\Pt - \Id) V, \chi_{\bh})_{L^2_e(Q_T)} & = (c^2 \nabla_{\bx} \Pg (\Pt - \Id) V, \nabla_{\bx} \mathcal{N}_{h_{\bx}}^\Omega \chi_{\bh})_{L^2_e(Q_T)}
    \\ &  \le \|c \nabla_{\bx} \Pg (\Pt - \Id) V \|_{L^2_e(Q_T)} \| \chi_{\bh} \|_{\NOmegaeh},
\end{align*}
from which
\begin{equation} \label{eq:52}
    I_3 + I_4 \le \|c \nabla_{\bx} \Pg (\Pt - \Id) V \|_{L^2_e(Q_T)} \| (\lambda_{\bh},\chi_{\bh}) \|_{\mathcal{W}_{e,\bh}(Q_T)}.
\end{equation}
By substituting the estimates~\eqref{eq:50}--\eqref{eq:52} for~$I_1$--$I_4$ into~\eqref{eq:49}, and combining with~\eqref{eq:48}, we obtain the desired claim.
\end{proof}

\subsubsection{Error estimates in the \texorpdfstring{$\mathcal{V}_{e,\bh}(Q_T)$}{V} norm} \label{sec:522}

\noindent
In the following, the notation $Q_1 \apprle Q_2$ (resp. $Q_1 \apprge Q_2$) means that $Q_1$ is bounded above (resp. below) by $\kappa \,Q_2$, where $\kappa>0$ is a constant independent of the mesh sizes $h_t$ and $h_{\bx}$, as well as of the involved functions.
We adopt the notation 
\begin{equation*}
    H_{0,\bullet}^{s_t}(0,T) := H_{0,\bullet}^1(0,T) \cap H^{s_t}(0,T), \quad  H_0^{s_{\bx}}(\Omega) := H_0^1(\Omega) \cap H^{s_{\bx}}(\Omega), \quad \text{for~} s_t, s_{\bx} \ge 1.
\end{equation*}
We proceed by deriving explicit error estimates under the following assumptions on the discrete spaces.
\begin{assumption}[space discretization] \label{ass:57}
The finite dimensional space~$S_{h_{\bx}}(\Omega) \subset H_0^1(\Omega)$ satisfies the following approximation property for some $p_{\bx} \in \mathbb{N}$, $p_{\bx} \ge 1$: 
there exists an operator~$Q_{h_{\bx}}^{p_{\bx}}: H_0^1(\Omega)\to S_{h_{\bx}}(\Omega)$ such that, for all $W\in H_0^{s_{\bx}+1}(\Omega)$ with~$1 \le s_{\bx} \le p_{\bx}$, it satisfies
\begin{align*}
     \| \nabla_{\bx} (Q_{h_{\bx}}^{p_{\bx}} W  - W) \|_{L^2(\Omega)} & \apprle  h_{\bx}^{s_{\bx}} \| W \|_{H^{s_{\bx}+1}(\Omega)}.
\end{align*}
\end{assumption}
\begin{assumption}[time discretization] \label{ass:58}
The finite dimensional space~$S_{h_t}(0,T) \subset H_{0,\bullet}^1(0,T)$ satisfies the following approximation property for some $p_t \in \N$, $p_t \ge 1$: there exists an operator $Q_{h_t}^{p_t} : H_{0,\bullet}^1(0,T) \to S_{h_t}(0,T)$ such that, for all $w \in H_{0,\bullet}^{s_t+1}(0,T)$ with $1 \le s_t \le p_t$, it satisfies
\begin{align*}
     \| \partial_t (Q_{h_t}^{p_t} w - w) \|_{L^2(0,T)} & \apprle  h_t^{s_t} \| \partial_t^{s_t+1} w \|_{L^2(0,T)}.
\end{align*}
\end{assumption}
\begin{remark}
In the case of discrete spaces consisting of continuous piecewise polynomials on shape-regular, simplicial meshes of~$\Omega$ and arbitrary meshes of~$(0,T)$, Assumptions~\ref{ass:57} and~\ref{ass:58} are both satisfied, e.g., with the quasi-interpolation operator defined in Section \cite[\S 22.3]{ErnGuermond2021a}; see \cite[Theorem 22.6]{ErnGuermond2021a}.
For one-dimensional B-splines, Assumption~\ref{ass:58} is satisfied, e.g., with the quasi-interpolant in \cite[Theorem 49]{LycheManniSpeleers2018} (see also references therein). As in the subsequent analysis we do not use inverse estimates, the space and time meshes are not required to be quasi-uniform.
\eremk \end{remark}
\noindent
Under Assumption~\ref{ass:58}, we have the following approximation result for the operator~$\Pt$. 
\begin{lemma} \label{lem:510}
For all $W \in H_{0,\bullet}^{s_t+1}(0,T;L^2(\Omega))$, with~$1 \le s_t \le p_t$ and~$p_t$ as in Assumption~\ref{ass:58}, it holds true that
\begin{equation*}
    \| ( \Pt -\Id) W \|_{L^2_e(Q_T)} \apprle h^{s_t+1}_t \| \partial_t^{s_t+1} W \|_{L_e^2(Q_T)}.
\end{equation*}
\end{lemma}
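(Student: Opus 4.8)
The plan is to run a duality (Aubin--Nitsche) argument, exploiting the fact that $\Pt$ is the Ritz projection associated with the symmetric, coercive bilinear form $a(u,v):=(\partial_t u,\partial_t v)_{L^2_e(Q_T)}$. Indeed, writing any element of $L^2(\Omega)\otimes\partial_t S_{h_t}(0,T)$ as $\partial_t v_{h_t}$ with $v_{h_t}\in L^2(\Omega)\otimes S_{h_t}(0,T)$, the defining relation~\eqref{eq:42} becomes the Galerkin orthogonality $a((\Pt-\Id)W,v_{h_t})=0$ for all such $v_{h_t}$. Symmetry of $a$ is immediate, and coercivity on functions vanishing at $t=0$ follows from a weighted Poincar\'e inequality in time (with constant depending only on $T$, hence harmless for the $\apprle$ bounds); by Lax--Milgram this also makes the dual problem below well posed.

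First I would record the primal estimate. Setting $\eta:=(\Pt-\Id)W$ and using the best-approximation identity~\eqref{eq:44} with the comparison function $(\Id\otimes Q_{h_t}^{p_t})W$ from Assumption~\ref{ass:58}, together with the equivalence $e^{-1}\le e^{-s/T}\le 1$ between $\|\cdot\|_{L^2(0,T)}$ and $\|\cdot\|_{L^2_e(0,T)}$ and Fubini in space, I obtain
\[
    \|\partial_t\eta\|_{L^2_e(Q_T)}
    \le \big\|\partial_t\big((\Id\otimes Q_{h_t}^{p_t})W-W\big)\big\|_{L^2_e(Q_T)}
    \apprle h_t^{s_t}\,\|\partial_t^{s_t+1}W\|_{L^2_e(Q_T)}.
\]

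Next I would set up the dual problem: given $g\in L^2_e(Q_T)$, find $\psi\in H^1_{0,\bullet}(0,T;L^2(\Omega))$ such that $a(\psi,v)=(g,v)_{L^2_e(Q_T)}$ for all $v\in H^1_{0,\bullet}(0,T;L^2(\Omega))$. Pointwise in $\bx$, integrating by parts with the weight $e^{-s/T}$ identifies the strong form $-\partial_t^2\psi+\tfrac1T\partial_t\psi=g$ with $\psi(0)=0$ and the adjoint boundary condition $\partial_t\psi(T)=0$. Testing with $v=\psi$ and invoking the weighted Poincar\'e inequality yields $\|\partial_t\psi\|_{L^2_e(Q_T)}\apprle\|g\|_{L^2_e(Q_T)}$, and the equation then gives the $H^2$-in-time regularity bound $\|\partial_t^2\psi\|_{L^2_e(Q_T)}\apprle\|g\|_{L^2_e(Q_T)}$; in particular $\psi\in H^2_{0,\bullet}(0,T;L^2(\Omega))$, so $(\Id\otimes Q_{h_t}^{p_t})\psi$ is an admissible approximant.

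Finally, I would assemble the bound by duality. Since $\eta\in H^1_{0,\bullet}(0,T;L^2(\Omega))$, testing the dual problem with $v=\eta$ and using symmetry of $a$ gives $(\eta,g)_{L^2_e(Q_T)}=a(\eta,\psi)$. Galerkin orthogonality then allows subtracting any discrete function; choosing $(\Id\otimes Q_{h_t}^{p_t})\psi$ and applying continuity of $a$ together with Assumption~\ref{ass:58} (with $s_t=1$) yields
\[
    (\eta,g)_{L^2_e(Q_T)}
    =a\big(\eta,\psi-(\Id\otimes Q_{h_t}^{p_t})\psi\big)
    \apprle \|\partial_t\eta\|_{L^2_e(Q_T)}\,h_t\,\|\partial_t^2\psi\|_{L^2_e(Q_T)}.
\]
Inserting the primal estimate and the dual regularity bound gives $(\eta,g)_{L^2_e(Q_T)}\apprle h_t^{s_t+1}\|\partial_t^{s_t+1}W\|_{L^2_e(Q_T)}\,\|g\|_{L^2_e(Q_T)}$; dividing by $\|g\|_{L^2_e(Q_T)}$ and taking the supremum over $0\ne g\in L^2_e(Q_T)$ yields the claim. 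The main obstacle is the dual step: correctly reading off the adjoint boundary condition $\partial_t\psi(T)=0$ from the weighted integration-by-parts identity and establishing the weighted $H^2$-in-time regularity estimate; once these are in hand, the remaining duality computation is routine.
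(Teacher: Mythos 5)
Your proposal is correct and follows essentially the same route as the paper: the paper also combines the best-approximation property \eqref{eq:44} with Assumption~\ref{ass:58} for the primal rate $h_t^{s_t}$, and gains the extra power of $h_t$ via the Aubin--Nitsche estimate \eqref{eq:54}, whose derivation it declares ``standard'' and omits. You simply supply the omitted details (the weighted dual problem $-\partial_t^2\psi+\tfrac1T\partial_t\psi=g$ with $\psi(0)=0$, $\partial_t\psi(T)=0$, and its $H^2$-in-time bound), working directly in the vector-valued setting where the paper argues scalar-in-time and integrates over $\Omega$ by Fubini.
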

\begin{proof}
We note that 
\begin{equation}\label{eq:53}
    \| ( \Pt-\Id)W \|_{L^2_e(Q_T)}^2 = \int_\Omega \|(\Pt-\Id) W(\bx,\cdot)\|^2_{L^2_e(0,T)} \dd \bx.
\end{equation}
We can apply the \emph{Aubin--Nitsche trick} to derive the following estimate:
\begin{equation}\label{eq:54}
    \lVert (\Pt-\Id) w\rVert_{L^2_e(0,T)} \apprle h_t \lVert \partial_t(\Pt-\Id)w \rVert_{L^2_e(0,T)} \quad \text{for all~}  w\in H_{0,\bullet}^1(0,T),
\end{equation}
where we used the notation~$\Pt$ to also denote the elliptic projector acting on functions in $H^1(0,T)$. This derivation is standard and we omit it. Then, the result follows by inserting estimate~\eqref{eq:54} within~\eqref{eq:53}, and using Assumption~\ref{ass:58}.
\end{proof}
\noindent
The following lemma establishes the approximation properties of compositions of the projectors needed to prove explicit convergence estimates.
\begin{lemma} \label{lem:511}
Let $1 \le s_{\bx} \le p_{\bx}$ and $1 \le s_t \le p_t$, with $p_{\bx}$ and $p_t$ as in Assumptions \ref{ass:57} and \ref{ass:58}, respectively. Then,
\begin{enumerate}[leftmargin=2em, label=(\roman*)]
\item for any $V \in H^1(0,T;H_0^{s_{\bx} +1}(\Omega))$,
\begin{equation*} 
    \| (\Pg-\Id) \partial_t V \|_{{L^2_e(Q_T)}} \apprle h_{\bx}^{s_{\bx}}\| \partial_t V \|_{L^2(0,T;H^{s_{\bx}+1}(\Omega))};
\end{equation*}
%
\item for any $U$ such that $\nabla_{\bx} \cdot (c^2 \nabla_{\bx} U) \in H_{0,\bullet}^{s_t+1}(0,T;L^2(\Omega))$,
\begin{equation*} 
    \| (\Pt - \Id) \nabla_{\bx} \cdot (c^2 \nabla_{\bx} U) \|_{L^2_e(Q_T)} \apprle h_t^{s_t+1}\| \partial_t^{s_t+1} \nabla_{\bx} \cdot (c^2 \nabla_{\bx} U) \|_{L^2(Q_T)};
\end{equation*}
%
\item for any $V \in H_{0,\bullet}^{s_t + 1}(0,T;H^1_0(\Omega))$,
\begin{equation*} 
    \| c \nabla_{\bx} \Pg (\Pt - \Id) V \|_{L^2_e(Q_T)} \apprle h_t^{s_t+1}\lVert \partial_t^{s_t+1} V\rVert_{L^2(0,T;H^1(\Omega))};
\end{equation*}
%
\item for any $U \in H_{0,\bullet}^{s_t+1}(0,T;L^2(\Omega)) \cap H^1(0,T;H_0^{s_{\bx}+1}(\Omega))$,
\begin{align*}
        \| \partial_t (\Pi^{\nabla}_{h_{\bx}} \Pt - \Id)U\rVert_{{L^2_e(Q_T)}}  \apprle h_t^{s_t}\lVert \partial_t^{s_t+1} U\rVert_{L^2(Q_T)} + h^{s_{\bx}}_{\bx} \lVert \partial_t U \rVert_{L^2(0,T;H^{s_{\bx} + 1}(\Omega))};
\end{align*}
%
\item for any $V \in H_{0,\bullet}^{s_t+1}(0,T;H_0^1(\Omega)) \cap L^2(0,T;H_0^{s_{\bx}+1}(\Omega))$,
\begin{align*} 
    \lVert (\Pg\Pt - \Id) V\rVert_{{L^2_e(Q_T)}} \apprle & h_t^{s_t+1}\lVert \partial_t^{s_t+1} V \rVert_{L^2(0,T;H^1(\Omega))} +  h_{\bx}^{s_{\bx}}\lVert V \rVert_{L^2(0,T;H^{s_{\bx} + 1}(\Omega))};
\end{align*}
%
\item for any $U \in H_{0,\bullet}^{s_t+1}(0,T;H^1(\Omega)) \cap L^2(0,T;H_0^{s_{\bx+1}}(\Omega))$,
\begin{align*}
    \lVert c\nabla_{\bx} (\Pg\Pt - \Id)U\rVert_{{L^2_e(Q_T)}} \apprle & h^{s_t+1}_t \lVert \partial_t^{s_t+1} U \rVert_{L^2(0,T;H^1(\Omega))} +  h_{\bx}^{s_{\bx}}\lVert U \rVert_{L^2(0,T;H^{s_{\bx} + 1}(\Omega))};
\end{align*}
%
\item for any $V \in H_{0,\bullet}^{s_t + 1}(0,T;L^2(\Omega)) \cap H^1(0,T;H_0^{s_{\bx} +1}(\Omega))$,
\begin{align*}
    \lVert \partial_t (\Pg \Pt - \Id) V \rVert_{\NOmega_{e,{\bh}}} & \apprle h_t^{s_t}\lVert \partial_t^{s_t+1} V\rVert_{L^2(Q_T)} + h_{\bx}^{s_{\bx}}\lVert \partial_t V \rVert_{L^2(0,T;H^{s_{\bx}+1}(\Omega))};
\end{align*}
%
\item for any 
\begin{align*}
    & U \in H_{0,\bullet}^{s_t+1}(0,T;H^1(\Omega)) \cap H^1(0,T;H_0^{s_{\bx}+1}(\Omega)),
    \\ & V \in H_{0,\bullet}^{s_t+1}(0,T;L^2(\Omega)) \cap H^1(0,T;H_0^{s_{\bx}+1}(\Omega)),
\end{align*}
it holds true that
\begin{equation*}
\begin{aligned} 
    \|((\Pg\Pt-\Id) U,(&\Pg\Pt-\Id) V) \|_{\mathcal{V}_{e,{\bh}}(Q_T)} \\
     &\apprle h_t^{s_t}\lVert \partial_t^{s_t+1} U\rVert_{L^2(0,T;H^1(\Omega))}+ h^{s_{\bx}}_{\bx} \lVert \partial_t U \rVert_{L^2(0,T;H^{s_{\bx} + 1}(\Omega))}
    \\ &\quad+ h_t^{s_t}\lVert \partial_t^{s_t+1} V\rVert_{L^2(Q_T)} + h_{\bx}^{s_{\bx}}\lVert \partial_t V \rVert_{L^2(0,T;H^{s_{\bx+1}}(\Omega))}.
\end{aligned}
\end{equation*}
\end{enumerate}
\end{lemma}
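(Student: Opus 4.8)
The plan is to expand the squared norm $\|((\Pg\Pt-\Id)U,(\Pg\Pt-\Id)V)\|^2_{\mathcal{V}_{e,\bh}(Q_T)}$ according to its definition in~\eqref{eq:40}, which produces exactly four summands: the two coming from the first component, $\|\partial_t(\Pg\Pt-\Id)U\|_{L^2_e(Q_T)}^2$ and $\|c\nabla_{\bx}(\Pg\Pt-\Id)U\|^2_{L^2_e(Q_T)}$, and the two coming from the second component, $\|\partial_t(\Pg\Pt-\Id)V\|^2_{\NOmegaeh}$ and $\|(\Pg\Pt-\Id)V\|^2_{L^2_e(Q_T)}$. I would bound each separately and then combine them through $\sqrt{a^2+b^2+c^2+d^2}\le a+b+c+d$, merging the repeated right-hand-side terms. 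The first three summands are precisely the left-hand sides of estimates already established in the present lemma: the time-derivative term in $U$ is controlled by part~(iv) applied to $U$, the gradient term by part~(vi) applied to $U$, and the $\NOmegaeh$ term by part~(vii) applied to $V$. I would first verify that the regularity assumed in~(viii) implies the hypotheses of each of these parts, which is immediate from the inclusions $H_{0,\bullet}^{s_t+1}(0,T;H^1(\Omega))\subset H_{0,\bullet}^{s_t+1}(0,T;L^2(\Omega))$ and $H^1(0,T;H_0^{s_{\bx}+1}(\Omega))\subset L^2(0,T;H_0^{s_{\bx}+1}(\Omega))$.

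For these three direct terms, after inserting the bounds from~(iv),~(vi), and~(vii), I would reconcile them with the right-hand side claimed in~(viii) through three elementary reductions, all legitimate since the hidden constants may depend on $T$. The factor $h_t^{s_t+1}$ appearing in~(vi) is absorbed into $h_t^{s_t}$ because $h_t$ is bounded (by $T$); the norm $\|\cdot\|_{L^2(Q_T)}$ in the time part of~(iv) is dominated by $\|\cdot\|_{L^2(0,T;H^1(\Omega))}$; and the spatial contribution $h_{\bx}^{s_{\bx}}\|U\|_{L^2(0,T;H^{s_{\bx}+1}(\Omega))}$ from~(vi) is converted into $h_{\bx}^{s_{\bx}}\|\partial_t U\|_{L^2(0,T;H^{s_{\bx}+1}(\Omega))}$ via the Poincar\'e inequality in time. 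The latter applies because $U(\cdot,0)=0$ in $H^{s_{\bx}+1}(\Omega)$: indeed $U(\cdot,0)=0$ in $H^1(\Omega)$ by the $H_{0,\bullet}$ condition, while $U\in H^1(0,T;H_0^{s_{\bx}+1}(\Omega))$ renders $U$ continuous into $H_0^{s_{\bx}+1}(\Omega)$, so the trace at $t=0$ coincides with the vanishing one.

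The one step that does not reduce to a direct citation is the fourth summand $\|(\Pg\Pt-\Id)V\|_{L^2_e(Q_T)}$, and this is where I expect the main obstacle. One cannot simply invoke part~(v): its hypotheses demand $V\in H_{0,\bullet}^{s_t+1}(0,T;H_0^1(\Omega))$, i.e.\ spatial $H_0^1$-regularity for \emph{all} time derivatives up to order $s_t+1$, which is strictly stronger than the $H_{0,\bullet}^{s_t+1}(0,T;L^2(\Omega))$ regularity available in~(viii); moreover the bound in~(v) carries the term $h_t^{s_t+1}\|\partial_t^{s_t+1}V\|_{L^2(0,T;H^1(\Omega))}$, whose $L^2(0,T;H^1(\Omega))$ norm is \emph{not} dominated by the $L^2(Q_T)$ norm that~(viii) permits. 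Instead, I would exploit that the error vanishes at $t=0$: since $S_{h_t}(0,T)\subset H_{0,\bullet}^1(0,T)$ we have $\Pt V(\cdot,0)=0$, hence $\Pg\Pt V(\cdot,0)=0$, and together with $V(\cdot,0)=0$ this gives $(\Pg\Pt-\Id)V(\cdot,0)=0$ in $L^2(\Omega)$. The Poincar\'e inequality in time (in the weighted norm, equivalent to the standard one) then yields $\|(\Pg\Pt-\Id)V\|_{L^2_e(Q_T)}\apprle\|\partial_t(\Pg\Pt-\Id)V\|_{L^2_e(Q_T)}$, and this right-hand side is controlled by applying part~(iv) to $V$, whose regularity in~(viii) matches exactly the hypotheses of~(iv) (an equivalent alternative would split $(\Pg\Pt-\Id)V=(\Pg-\Id)\Pt V+(\Pt-\Id)V$ and estimate each piece using the spatial Poincar\'e inequality, the stability of $\Pt$, and Lemma~\ref{lem:510}). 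Collecting the four bounds and merging the repeated terms then produces precisely the claimed estimate.
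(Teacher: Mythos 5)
Your proposal addresses only part \emph{(viii)} of the lemma, and for that part it is correct and essentially coincides with the paper's own (very terse) argument: the paper likewise obtains \emph{(viii)} from \emph{(iv)}--\emph{(vii)} together with the Poincar\'e inequality applied in space and in time. Your handling of the fourth summand is in fact more careful than the paper's one-line remark: you correctly note that \emph{(v)} cannot be invoked under the regularity assumed in \emph{(viii)} (neither its hypothesis $V\in H_{0,\bullet}^{s_t+1}(0,T;H_0^1(\Omega))$ nor its right-hand side $h_t^{s_t+1}\lVert\partial_t^{s_t+1}V\rVert_{L^2(0,T;H^1(\Omega))}$ is compatible), and you replace it by the observation that $(\Pg\Pt-\Id)V$ vanishes at $t=0$, so the time Poincar\'e inequality reduces the term to $\lVert\partial_t(\Pg\Pt-\Id)V\rVert_{L^2_e(Q_T)}$, which is controlled by \emph{(iv)} applied to $V$. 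The reconciliations $h_t^{s_t+1}\apprle h_t^{s_t}$, $\lVert\cdot\rVert_{L^2(Q_T)}\le\lVert\cdot\rVert_{L^2(0,T;H^1(\Omega))}$, and the time Poincar\'e inequality turning $\lVert U\rVert_{L^2(0,T;H^{s_{\bx}+1}(\Omega))}$ into $\lVert\partial_t U\rVert_{L^2(0,T;H^{s_{\bx}+1}(\Omega))}$ are all legitimate, with the hidden constants depending on $T$ as allowed.

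The genuine gap is that parts \emph{(i)}--\emph{(vii)} are treated as ``estimates already established in the present lemma'' when they are in fact part of the statement to be proven, and they constitute the bulk of the paper's proof. Concretely, what is missing is: \emph{(i)} via the spatial Poincar\'e inequality, the best-approximation property \eqref{eq:43} of $\Pg$, and Assumption~\ref{ass:57}; \emph{(ii)} directly from Lemma~\ref{lem:510}; \emph{(iii)} from the stability bound \eqref{eq:45} combined with Lemma~\ref{lem:510}; and \emph{(iv)}--\emph{(vii)}, which all rest on the splittings $\Pg\Pt-\Id=(\Pt-\Id)+\Pt(\Pg-\Id)=\Pg(\Pt-\Id)+(\Pg-\Id)$ together with the commutativity properties of Lemma~\ref{lem:55}, the stability bounds \eqref{eq:45}--\eqref{eq:46}, the approximation properties \eqref{eq:43}--\eqref{eq:44} with Assumptions~\ref{ass:57}--\ref{ass:58}, Lemma~\ref{lem:510}, and, for \emph{(vii)}, the reduction $\lVert\cdot\rVert_{\NOmegaeh}\apprle\lVert\cdot\rVert_{L^2_e(Q_T)}$ from Lemma~\ref{lem:51}. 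Without these arguments the proof of the lemma is incomplete, since \emph{(viii)} is derived entirely by citing them.
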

\begin{proof}
For the estimate in~\emph{(i)}, we apply the Poincar\'e inequality in space 
\begin{equation*}
     \lVert (\Pi^{\nabla}_{h_x} - \Id)\partial_tV \rVert_{L^2_e(Q_T)} \apprle \lVert  c \nabla_x(\Pi^{\nabla}_{h_x} - \Id)\partial_tV\rVert_{L^2_e(Q_T)},
\end{equation*}
the approximation property in~\eqref{eq:43}, and Assumption~\ref{ass:57}:
\begin{equation*}
    \begin{aligned}
        &\lVert c \nabla_x(\Pi^{\nabla}_{h_x} - \Id)\partial_tV\rVert_{L_e^2(Q_T)}  
        \apprle \inf_{W_{h_{\bx}} \in S_{h_{\bx}}(\Omega) \otimes L^2(0,T)} \| c\nabla_{\bx} (W_{h_{\bx}} - \partial_t V) \|_{L_e^2(Q_T)}
        \\ &\qquad \apprle h_{\bx}^{s_{\bx}} \left(\int_0^T \| \partial_t V(\cdot,s) \|^2_{H^{s_{\bx}+1}(\Omega)} e^{-s/T} \, \dd s\right)^{\frac{1}{2}}
        \apprle h_{\bx}^{s_{\bx}} \| \partial_t V \|_{L^2(0,T;H^{s_{\bx}+1}(\Omega))}.
    \end{aligned}
\end{equation*}

\noindent 
To obtain \emph{(ii)}, we simply use Lemma \ref{lem:510}, while~\emph{(iii)} is obtained by combining the stability property in~\eqref{eq:45} with Lemma \ref{lem:510}.

\noindent
In the subsequent steps of the proof, we repeatedly use the following identities, which follow from Lemma~\ref{lem:55}:
\begin{align*}
    \Pg \Pt - \Id = \Pt\Pg - \Id & = \Pt - \Id + \Pt(\Pi^{\nabla}_{h_{\bx}} - \Id) 
    \\ & = \Pi^{\nabla}_{h_{\bx}}(\Pt - \Id) + \Pi^{\nabla}_{h_{\bx}} - \Id.
\end{align*}
%

\noindent
To prove the estimate in~\emph{(iv)}, we compute
\begin{align*}
    \lVert \partial_t(\Pg \Pt - \Id)U\rVert_{{L^2_e(Q_T)}} & \leq \lVert \partial_t (\Pt - \Id)U \rVert_{{L^2_e(Q_T)}} + \lVert\partial_t \Pt (\Pg - \Id)U \rVert_{{L^2_e(Q_T)}} \\
    & \leq \lVert\partial_t (\Pt - \Id) U \rVert_{{L^2_e(Q_T)}} + \lVert (\Pg - \Id) \partial_t U \rVert_{{L^2_e(Q_T)}},
    \\ & \apprle \lVert\partial_t (\Pt - \Id) U \rVert_{{L^2_e(Q_T)}} + \lVert c \nabla_{\bx} (\Pg - \Id) \partial_t U \rVert_{{L^2_e(Q_T)}},
\end{align*}
where we also used the stability property in \eqref{eq:46}, the Poincar\'e inequality in space and the commutativity properties in Lemma \ref{lem:55}. Then,~\emph{(iv)} follows from the approximation properties~\eqref{eq:43} and~\eqref{eq:44}, combined with Assumptions~\ref{ass:57} and~\ref{ass:58}.

\noindent
For the inequality in~\emph{(v)}, the Poincar\'e inequality in space, the stability property in~\eqref{eq:45}, and the commutativity properties in Lemma \ref{lem:55} imply
\begin{align*}
    \lVert (\Pg\Pt - \Id) V \rVert_{{L^2_e(Q_T)}} & \leq \lVert \Pg(\Pt - \Id) V\rVert_{{L^2_e(Q_T)}} + \lVert (\Pg-\Id) V\rVert_{{L^2_e(Q_T)}} 
    \\ & \apprle \lVert \nabla_{\bx} \Pg(\Pt - \Id) V\rVert_{{L^2_e(Q_T)}} + \lVert \nabla_{\bx} (\Pg-\Id) V\rVert_{{L^2_e(Q_T)}} 
    \\ & \apprle  \lVert (\Pt - \Id) \nabla_{\bx} V\rVert_{{L^2_e(Q_T)}} + \lVert \nabla_{\bx} (\Pg - \Id) V\rVert_{{L^2_e(Q_T)}}.
\end{align*}
We conclude with Lemma \ref{lem:510} and the approximation property \eqref{eq:43} of $\Pg$, combined with Assumption \ref{ass:57}.

\noindent
For~\emph{(vi)}, we proceed similarly, and compute, with the stability property in~\eqref{eq:45} and the commutativity properties in Lemma \ref{lem:55}
\begin{align*}
     \lVert c \nabla_{\bx} (\Pg \Pt - \Id)U\rVert_{{L^2_e(Q_T)}} &
    \\ & \hspace{-1cm} \leq \lVert c \nabla_{\bx} \Pg (\Pt - \Id)U \rVert_{{L^2_e(Q_T)}} + \lVert c \nabla_{\bx} (\Pg - \Id)U \rVert_{{L^2_e(Q_T)}}
    \\ & \hspace{-1cm}
    \leq \lVert (\Pt - \Id)  c\nabla_{\bx} U \rVert_{{L^2_e(Q_T)}}+ \lVert c \nabla_{\bx} (\Pg - \Id)U \rVert_{{L^2_e(Q_T)}},
\end{align*}
and conclude exactly as for~\emph{(v)}.

\noindent
For~\emph{(vii)}, we use Lemma \ref{lem:51} to first obtain
\begin{align*}
    \lVert \partial_t (\Pg \Pt - \Id) V\rVert_{\NOmega_{e,{\bh}}} &\apprle \lVert \partial_t (\Pg \Pt - \Id) V\rVert_{{L^2_e(Q_T)}},
\end{align*}
and then proceed as in~\emph{(iv)}.

\noindent
Finally, the projection error estimate in the~$\mathcal{V}_{e,\bh}(Q_T)$ norm stated in~\emph{(viii)} readily follows from~\emph{(iv)}--\emph{(vii)} and the Poincar\'e inequality applied in space and in time. 
\end{proof}
\begin{remark}
Under a standard elliptic regularity assumption in space, improved estimates for the projection error associated with~$\Pg$ in the~$L^2_e(Q_T)$ norm can be derived using a duality argument. We emphasize that this assumption is not needed for the error estimate in Theorem~\ref{th:514}. However, in Section~\ref{sec:53} below, we rely on it, along with the resulting improved projection error estimates, to derive error bounds in the $L^2$ norm.
\eremk \end{remark}
\begin{remark}\label{rem:513}
When~$U$ and~$V$ are sufficiently smooth, we deduce from Lemma~\ref{lem:56} and~\emph{(i)}--\emph{(iii)} in Lemma~\ref{lem:511} that
\begin{align*}
    \| (\Pg \Pt U- U_{\bh}, \Pg \Pt V - V_{\bh})\|_{\mathcal{V}_{e,\bh}(Q_T)} \apprle h_{\bx}^{s_{\bx}}+h_t^{s_t+1},
\end{align*}
while, for the projection error, the best achievable estimate in the norm~$\| \cdot \|_{\mathcal{V}_{e,\bh}(Q_T)}$ is
\begin{equation*}
    \|((\Pg\Pt-\Id) U,(\Pg\Pt-\Id) V)\|_{\mathcal{V}_{e,\bh}(Q_T)}\apprle h_{\bx}^{s_{\bx}}+h_t^{s_t}.
\end{equation*}
\eremk \end{remark}
\noindent
Combining Lemmas \ref{lem:56} and \ref{lem:511} yields the main result.
\begin{theorem} \label{th:514}
Let us assume the regularity on the data as in Assumption~\ref{ass:21}. Let~$(U,V)$ be the unique solution to~\eqref{eq:36}, and~$(U_{\bh},V_{\bh})$ be the unique discrete solution to~\eqref{eq:37}. Let $1 \le s_{\bx} \le p_{\bx}$ and $1 \le s_t \le p_t$, with $p_{\bx}$ and $p_t$ as in Assumptions \ref{ass:57} and \ref{ass:58}, respectively. Assume that~$(U,V)$ satisfies 
\begin{align*}
    & U \in H_{0,\bullet}^{s_t+1}(0,T;H^1(\Omega)) \cap H^1(0,T;H_0^{s_{\bx}+1}(\Omega)),
    \\ & V \in H_{0,\bullet}^{s_t+1}(0,T;H^1(\Omega)) \cap H^1(0,T;H_0^{s_{\bx}+1}(\Omega)),
    \\ &  \nabla_{\bx} \cdot (c^2 \nabla_{\bx} U) \in H_{0,\bullet}^{s_t+1}(0,T;L^2(\Omega)).
\end{align*}
Then, we have the following error estimate:
\begin{align*}
    \| (U,V) - (U_{\bh},V_{\bh})\|_{\mathcal{V}_{e,\bh}(Q_T)} \apprle & \,\,\, 
    h^{s_t}_t \lVert \partial_t^{s_t+1} U \rVert_{L^2(0,T;H^1(\Omega))} + h^{s_{\bx}}_{\bx} \lVert \partial_t U \rVert_{L^2(0,T;H^{s_{\bx+1}}(\Omega))} 
    \\  + & h_t^{s_t}\lVert \partial_t^{s_t+1} V\rVert_{L^2(0,T;H^1(\Omega))} + h_{\bx}^{s_{\bx}}\lVert \partial_t V \rVert_{L^2(0,T;H^{s_{\bx}+1}(\Omega))}
    \\  + & h_t^{s_t+1} \| \partial_t^{s_t+1} \nabla_{\bx} \cdot (c^2 \nabla_{\bx} U) \|_{L^2(Q_T)}.
\end{align*}
Moreover, for the term~$\|c \nabla_{\bx} (U - U_{\bh}) \|_{L^2_e(Q_T)}$ alone, we have
\begin{align*}
    \|c \nabla_{\bx} (U - U_{\bh}) \|_{L^2_e(Q_T)} & 
     \\ & \hspace{-3.5cm} \apprle   h^{s_t+1}_t \lVert \partial_t^{s_t+1} U \rVert_{L^2(0,T;H^1(\Omega))} + h_{\bx}^{s_{\bx}}\lVert U \rVert_{L^2(0,T;H^{s_{\bx} + 1}(\Omega))} +h_{\bx}^{s_{\bx}}\lVert \partial_t V \rVert_{L^2(0,T;H^{s_{\bx} + 1}(\Omega))}
     \\ + & h_t^{s_t+1} (\| \partial_t^{s_t+1} \nabla_{\bx} \cdot (c^2 \nabla_{\bx} U) \|_{L^2(Q_T)} + \lVert \partial_t^{s_t+1} V\rVert_{L^2(0,T;H^1(\Omega))}).
\end{align*}
\end{theorem}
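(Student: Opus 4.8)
The plan is to obtain both estimates by a single triangle inequality organized around the tensor-product projection $\Pg\Pt$, after which everything reduces to substituting the earlier lemmas. Writing
\[
(U,V)-(U_{\bh},V_{\bh}) = \bigl((U,V)-(\Pg\Pt U,\Pg\Pt V)\bigr) + \bigl((\Pg\Pt U,\Pg\Pt V)-(U_{\bh},V_{\bh})\bigr),
\]
the triangle inequality in the $\|\cdot\|_{\mathcal{V}_{e,\bh}(Q_T)}$ norm splits the task into controlling a \emph{projection error} and a \emph{discrete error} separately.

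For the first estimate, I would bound the projection error directly by Lemma~\ref{lem:511}\emph{(viii)}, which produces the four terms $h_t^{s_t}\|\partial_t^{s_t+1}U\|_{L^2(0,T;H^1(\Omega))}$, $h_{\bx}^{s_{\bx}}\|\partial_t U\|_{L^2(0,T;H^{s_{\bx}+1}(\Omega))}$, $h_t^{s_t}\|\partial_t^{s_t+1}V\|_{L^2(Q_T)}$ and $h_{\bx}^{s_{\bx}}\|\partial_t V\|_{L^2(0,T;H^{s_{\bx}+1}(\Omega))}$. The discrete error I would control with Lemma~\ref{lem:56}, and then bound its three right-hand contributions by items~\emph{(i)}, \emph{(ii)} and \emph{(iii)} of Lemma~\ref{lem:511}, giving $h_{\bx}^{s_{\bx}}\|\partial_t V\|_{L^2(0,T;H^{s_{\bx}+1}(\Omega))}$, $h_t^{s_t+1}\|\partial_t^{s_t+1}\nabla_{\bx}\cdot(c^2\nabla_{\bx}U)\|_{L^2(Q_T)}$ and $h_t^{s_t+1}\|\partial_t^{s_t+1}V\|_{L^2(0,T;H^1(\Omega))}$. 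Summing the two groups, the $h_t^{s_t+1}$ contributions are subdominant to the corresponding $h_t^{s_t}$ projection terms and are absorbed, the $h_{\bx}^{s_{\bx}}\|\partial_t V\|$ term appears in both and is merged, and finally I would invoke the continuous inclusion $\|\cdot\|_{L^2(Q_T)}\le\|\cdot\|_{L^2(0,T;H^1(\Omega))}$ to rewrite the $V$-projection term with the $H^1$ norm, reproducing the stated bound.

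For the refined estimate on $\|c\nabla_{\bx}(U-U_{\bh})\|_{L^2_e(Q_T)}$, the crucial observation is that this single component carries \emph{no time derivative}, so its projection part enjoys the sharper temporal rate $h_t^{s_t+1}$. I would split $\|c\nabla_{\bx}(U-U_{\bh})\|_{L^2_e(Q_T)}\le\|c\nabla_{\bx}(U-\Pg\Pt U)\|_{L^2_e(Q_T)}+\|c\nabla_{\bx}(\Pg\Pt U-U_{\bh})\|_{L^2_e(Q_T)}$, bound the projection part by Lemma~\ref{lem:511}\emph{(vi)} (which gives the $h_t^{s_t+1}$ temporal rate and an $h_{\bx}^{s_{\bx}}\|U\|$ spatial term), and bound the discrete part by the full discrete-error norm $\|(\Pg\Pt U-U_{\bh},\Pg\Pt V-V_{\bh})\|_{\mathcal{V}_{e,\bh}(Q_T)}$, which by Lemma~\ref{lem:56} combined with items~\emph{(i)}--\emph{(iii)} contributes only $h_{\bx}^{s_{\bx}}$ spatial terms and $h_t^{s_t+1}$ temporal terms. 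Collecting and merging repeated contributions then yields the second displayed inequality.

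There is no genuinely hard step here: the argument is entirely a matter of organizing triangle inequalities and substituting the earlier lemmas. The only points requiring attention are bookkeeping ones — recognizing that each $h_t^{s_t+1}$ term is dominated by the matching $h_t^{s_t}$ term in the first estimate, applying the inclusion $L^2(Q_T)\hookrightarrow L^2(0,T;H^1(\Omega))$ to align the $V$-term with the statement, and, for the second estimate, deliberately using the derivative-free projection bound~\emph{(vi)} (rather than the full-norm bound~\emph{(viii)}) so as to extract the extra factor of $h_t$ in time.
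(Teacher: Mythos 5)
Your proposal is correct and follows essentially the same route as the paper: the same triangle inequality around the tensor-product projection $\Pg\Pt$, with the projection error bounded by Lemma~\ref{lem:511}\emph{(viii)} and the discrete error by Lemma~\ref{lem:56} combined with items \emph{(i)}--\emph{(iii)}, and, for the refined gradient estimate, the same substitution of the derivative-free bound \emph{(vi)} for the projection part. The bookkeeping remarks (absorbing the subdominant $h_t^{s_t+1}$ term for $V$ and using $\|\cdot\|_{L^2(Q_T)}\le\|\cdot\|_{L^2(0,T;H^1(\Omega))}$) are consistent with the stated result.
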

\begin{proof}
For all $(W_{\bh},Z_{\bh}) \in (Q_{\bh}(Q_T))^2$, with the triangle inequality, we deduce 
\begin{equation} \label{eq:58}
\begin{aligned}
    \| (U,V) - (U_{\bh},V_{\bh})\|_{\mathcal{V}_{e,\bh}(Q_T)} \le & \| (U,V) - (W_{\bh},Z_{\bh})\|_{\mathcal{V}_{e,\bh}(Q_T)} 
    \\ & + \| (W_{\bh},Z_{\bh}) - (U_{\bh},V_{\bh})\|_{\mathcal{V}_{e,\bh}(Q_T)}.
\end{aligned}
\end{equation}
We choose $W_{\bh} := \Pg \Pt U$ and $Z_{\bh} := \Pg \Pt V$\, where the projectors are defined in~\eqref{eq:41} and \eqref{eq:42}. Then, we obtain the estimate in the $\|\cdot\|_{\mathcal{V}_{e,\bh}(Q_T)}$ norm by using~\emph{(viii)} in Lemma~\ref{lem:511} to estimate~$\| (U,V) - (W_{\bh},Z_{\bh})\|_{\mathcal{V}_{e,\bh}(Q_T)}$, and combining Lemma~\ref{lem:56} with \emph{(i)}--\emph{(iii)} in Lemma~\ref{lem:511} to estimate~$\| (W_{\bh},Z_{\bh}) - (U_{\bh},V_{\bh})\|_{\mathcal{V}_{e,\bh}(Q_T)}$. \\ 
\noindent
For the second estimate, the triangle inequality, together with the definition \eqref{eq:40} of $\| \cdot \|_{\mathcal{V}_{e,\bh}(Q_T)}$, gives
\begin{align}
    \|c \nabla_{\bx} (U - U_{\bh}) \|_{L^2_e(Q_T)} & \le  \|c \nabla_{\bx} (W_{\bh} - U_{\bh}) \|_{L^2_e(Q_T)} + \|c \nabla_{\bx} (W_{\bh}-U) \|_{L^2_e(Q_T)}
    \nonumber
    \\ & \le \| (W_{\bh},Z_{\bh}) - (U_{\bh},V_{\bh})\|_{\mathcal{V}_{e,\bh}(Q_T)}
    + \|c \nabla_{\bx} (W_{\bh}-U) \|_{L^2_e(Q_T)}.
    \label{eq:59}
\end{align}
We choose again~$W_{\bh} := \Pg \Pt U$ and $Z_{\bh} := \Pg \Pt V$, and conclude estimating $\| (W_{\bh},Z_{\bh})-(U_{\bh},V_{\bh})\|_{\mathcal{V}_{e,\bh}(Q_T)}$ as in the first part, and~$\|c \nabla_{\bx} (W_{\bh}-U) \|_{L^2_e(Q_T)}$ by using~\emph{(vi)} in Lemma~\ref{lem:511}.
\end{proof}
\begin{remark}
As pointed out in Remark~\ref{rem:513}, the projection error term dominates in~\eqref{eq:58}, yielding 
\begin{equation*}
    \| (U,V) - (U_{\bh},V_{\bh})\|_{\mathcal{V}_{e,\bh}(Q_T)} \apprle h_{\bx}^{s_{\bx}}+h_t^{s_t}.
\end{equation*}
On the contrary, both terms on the right-hand side of~\eqref{eq:59} admit similar bounds, allowing to obtain
\begin{equation*}
    \|c \nabla_{\bx} (U - U_{\bh}) \|_{L^2_e(Q_T)} \apprle h_{\bx}^{s_{\bx}}+h_t^{s_t+1}.
\end{equation*}
\end{remark}

\subsubsection{Error estimates in the \texorpdfstring{$L^2$}{L2} norm} \label{sec:53}

\noindent
In this section, we derive improved error estimates for~$\| U - U_{\bh}\|_{L_e^2(Q_T)}$ and $\|V- V_{\bh}\|_{L_e^2(Q_T)}$ under the following standard elliptic regularity assumption in space.
\begin{assumption}\label{ass:515}(elliptic regularity) We assume that, for any~$g \in L^2(\Omega)$, the unique solution to the problem: find~$\phi \in H_0^1(\Omega)$ such that
\begin{equation*}
    (c^2 \nabla_{\bx} \phi, \nabla_{\bx} \psi)_{L^2(\Omega)} = (g,\psi)_{L^2(\Omega) \quad} \quad \text{for all~} \psi \in H^1_0(\Omega)
\end{equation*}
belongs to $H^2(\Omega)$ and satisfies the stability estimate $\| \phi \|_{H^2(\Omega)} \apprle \| g \|_{L^2(\Omega)}$.
\end{assumption}
\begin{remark}
Assumption \ref{ass:515} is satisfied when $c^2$ is a Lipschitz function in~$\Omega$, and $\Omega$ is convex and Lipschitz, or $\partial\Omega$ is of class~$C^{1,1}$. 
\eremk \end{remark}

\noindent
Under Assumptions~\ref{ass:57} and~\ref{ass:515}, a standard duality argument yields
\begin{equation} \label{eq:60}
    \|(\Pg-\Id)W\|_{L^2_e(Q_T)}\apprle h_{\bx}\|\nabla_{\bx}(\Pg-\Id)W\|_{L^2_e(Q_T)} \quad \forall \ W\in L^2(0,T;H^1_0(\Omega)),
\end{equation}
which leads to error estimates in the~$L^2_e(Q_T)$ norm with improved converge rates in~$h_{\bx}$. To achieve this, we first refine the projection error estimates~\emph{(i)} and~\emph{(v)} of Lemma~\ref{lem:511}.
\begin{lemma} \label{lem:517}
Let $1 \le s_{\bx} \le p_{\bx}$ and $1 \le s_t \le p_t$ , with $p_{\bx}$ and $p_{t}$ as in Assumptions \ref{ass:57} and \ref{ass:58}, respectively. Then, if also Assumption~\ref{ass:515} is satisfied,
we obtain
\begin{enumerate}[leftmargin=2em, label=(\roman*)]
\item for any $W \in H^1(0,T;H_0^{s_{\bx} +1}(\Omega))$,
\begin{equation*} 
    \| (\Pg-\Id) \partial_t W \|_{{L^2_e(Q_T)}} \apprle h_{\bx}^{s_{\bx}+1}\| \partial_t W \|_{L^2(0,T;H^{s_{\bx}+1}(\Omega))};
\end{equation*}
%
\item for any $W \in H_{0,\bullet}^{s_t+1}(0,T;H_0^1(\Omega)) \cap L^2(0,T;H_0^{s_{\bx}+1}(\Omega))$,
\begin{align*} 
    \lVert (\Pg\Pt - \Id) W\rVert_{{L^2_e(Q_T)}} \apprle & h_t^{s_t+1}\lVert \partial_t^{s_t+1} W \rVert_{L^2(0,T;H^1(\Omega))} 
     +  h_{\bx}^{s_{\bx}+1}\lVert W \rVert_{L^2(0,T;H^{s_{\bx} + 1}(\Omega))}.
\end{align*}
\end{enumerate}
\end{lemma}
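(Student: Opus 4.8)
The plan is to reproduce the arguments used for parts \emph{(i)} and \emph{(v)} of Lemma~\ref{lem:511}, replacing at the decisive step the Poincar\'e inequality in space by the sharper duality estimate~\eqref{eq:60}. Since~\eqref{eq:60} trades the factor $\|\nabla_{\bx}(\Pg-\Id)\cdot\|_{L^2_e(Q_T)}$ for $h_{\bx}$ times the same quantity, each spatial approximation contribution gains exactly one additional power of $h_{\bx}$, turning $h_{\bx}^{s_{\bx}}$ into $h_{\bx}^{s_{\bx}+1}$. Throughout I would use $c\ge c_0>0$ to pass freely between $\|\nabla_{\bx}\cdot\|_{L^2_e(Q_T)}$ and $\|c\nabla_{\bx}\cdot\|_{L^2_e(Q_T)}$.

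For~\emph{(i)}, I would first note that $\partial_t W\in L^2(0,T;H_0^{s_{\bx}+1}(\Omega))\subset L^2(0,T;H_0^1(\Omega))$, so that~\eqref{eq:60} applies with $W$ replaced by $\partial_t W$, giving
\[
\|(\Pg-\Id)\partial_t W\|_{L^2_e(Q_T)}\apprle h_{\bx}\,\|\nabla_{\bx}(\Pg-\Id)\partial_t W\|_{L^2_e(Q_T)}.
\]
Then, bounding $\|\nabla_{\bx}(\Pg-\Id)\partial_t W\|_{L^2_e(Q_T)}\apprle\|c\nabla_{\bx}(\Pg-\Id)\partial_t W\|_{L^2_e(Q_T)}$ and invoking the approximation property~\eqref{eq:43} together with Assumption~\ref{ass:57} exactly as in the proof of~\emph{(i)} of Lemma~\ref{lem:511}, the right-hand side is controlled by $h_{\bx}^{s_{\bx}}\|\partial_t W\|_{L^2(0,T;H^{s_{\bx}+1}(\Omega))}$. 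Multiplying the two factors yields the claimed $h_{\bx}^{s_{\bx}+1}$ rate.

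For~\emph{(ii)}, I would split $\Pg\Pt-\Id=\Pg(\Pt-\Id)+(\Pg-\Id)$ and estimate the two terms separately. The purely temporal term is treated as in~\emph{(v)} of Lemma~\ref{lem:511}: the Poincar\'e inequality, the stability bound~\eqref{eq:45}, and the commutativity of $\Pg$ with $\Pt$ and $\nabla_{\bx}$ from Lemma~\ref{lem:55} give $\|\Pg(\Pt-\Id)W\|_{L^2_e(Q_T)}\apprle\|(\Pt-\Id)\nabla_{\bx}W\|_{L^2_e(Q_T)}$, and Lemma~\ref{lem:510} then produces the $h_t^{s_t+1}\|\partial_t^{s_t+1}W\|_{L^2(0,T;H^1(\Omega))}$ contribution; here I would keep the ordinary Poincar\'e step, since this term carries no spatial approximation error to improve. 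For the purely spatial term I would instead apply~\eqref{eq:60} to $W$ directly, obtaining $\|(\Pg-\Id)W\|_{L^2_e(Q_T)}\apprle h_{\bx}\|\nabla_{\bx}(\Pg-\Id)W\|_{L^2_e(Q_T)}\apprle h_{\bx}^{s_{\bx}+1}\|W\|_{L^2(0,T;H^{s_{\bx}+1}(\Omega))}$ by~\eqref{eq:43} and Assumption~\ref{ass:57}. Adding the two bounds gives the stated estimate.

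The genuinely hard part has already been carried out in establishing~\eqref{eq:60}, whose proof rests on Assumption~\ref{ass:515} and a standard Aubin--Nitsche duality argument; granting it, the present lemma is essentially bookkeeping. The one point requiring care is to apply the improved estimate~\eqref{eq:60} \emph{only} to the pure spatial-projection errors $(\Pg-\Id)\partial_t W$ and $(\Pg-\Id)W$: using it on the mixed term $\Pg(\Pt-\Id)W$ would gain nothing, since that term carries the temporal rate, so there the ordinary Poincar\'e inequality is retained. It also remains to verify the regularity hypotheses ensuring that~\eqref{eq:60} and Lemma~\ref{lem:510} are applicable, namely $\partial_t W,\,W\in L^2(0,T;H_0^1(\Omega))$ for~\eqref{eq:60} and $\nabla_{\bx}W\in H_{0,\bullet}^{s_t+1}(0,T;L^2(\Omega))$ for the temporal term, all of which follow from the stated membership of $W$.
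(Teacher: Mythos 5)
Your proposal is correct and follows essentially the same route as the paper: both parts are obtained by rerunning the proofs of \emph{(i)} and \emph{(v)} of Lemma~\ref{lem:511}, substituting the duality estimate~\eqref{eq:60} for the Poincar\'e inequality only on the pure spatial-projection terms $(\Pg-\Id)\partial_t W$ and $(\Pg-\Id)W$, which yields the extra power of $h_{\bx}$. Your remark that the mixed term $\Pg(\Pt-\Id)W$ should keep the ordinary Poincar\'e step is exactly the point the paper's (terser) proof leaves implicit.
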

\begin{proof}
The proofs of \emph{(i)} and \emph{(ii)} proceed analogously to those of \textit{(i)} and \textit{(v)} in Lemma~\ref{lem:511}, respectively, except that, instead of the Poincar\'e inequality in space, we use~\eqref{eq:60} to estimate terms of the form~$\| (\Pi_{h_{\bx}}^\nabla - \Id) \cdot \|_{L^2_e(Q_T)}$, which gives the extra power of~$h_{\bx}$.
\end{proof}
\noindent
We now prove the main result of this section. 
\begin{theorem} \label{th:517}
In addition to the assumptions of Theorem \ref{th:514}, suppose that Assumption~\ref{ass:515} is satisfied. Then, we have 
\begin{equation} \label{eq:61}
\begin{aligned}
     \| U - U_{\bh}\|_{L_e^2(Q_T)} \apprle & \,\,\, 
    h^{s_t+1}_t \lVert \partial_t^{s_t+1} U \rVert_{L^2(0,T;H^1(\Omega))} + h^{s_{\bx}+1}_{\bx} \lVert U \rVert_{L^2(0,T;H^{s_{\bx+1}}(\Omega))} 
     \\ &  + h_t^{s_t+1} (\| \partial_t^{s_t+1} \nabla_{\bx} \cdot (c^2 \nabla_{\bx} U) \|_{L^2(Q_T)} + \lVert \partial_t^{s_t+1} V\rVert_{L^2(0,T;H^1(\Omega))}),
     \\ & + h_{\bx}^{s_{\bx}+1} \| \partial_t V \|_{L^2(0,T;H^{s_{\bx}+1}(\Omega))},
     \end{aligned}
     \end{equation}
\begin{equation} \label{eq:62}
\begin{aligned}
     \| V - V_{\bh}\|_{L_e^2(Q_T)} \apprle & \,\,\, 
    h^{s_t+1}_t \lVert \partial_t^{s_t+1} V \rVert_{L^2(0,T;H^1(\Omega))} + h^{s_{\bx}+1}_{\bx} \lVert \partial_t V \rVert_{L^2(0,T;H^{s_{\bx+1}}(\Omega))} 
    \\ &  + h_t^{s_t+1} (\| \partial_t^{s_t+1} \nabla_{\bx} \cdot (c^2 \nabla_{\bx} U) \|_{L^2(Q_T)} + \lVert \partial_t^{s_t+1} V\rVert_{L^2(0,T;H^1(\Omega))}).
    \end{aligned}
\end{equation}
\end{theorem}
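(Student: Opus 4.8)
The plan is to estimate both errors by splitting them, via the triangle inequality, into a \emph{projection error} and a \emph{discrete error}, using the commuting quasi-interpolant $\Pg\Pt$ from Lemma~\ref{lem:55}. Writing
\[
U - U_{\bh} = (U - \Pg\Pt U) + (\Pg\Pt U - U_{\bh}), \qquad V - V_{\bh} = (V - \Pg\Pt V) + (\Pg\Pt V - V_{\bh}),
\]
the projection errors are controlled by the \emph{improved} $L^2_e(Q_T)$ estimates in Lemma~\ref{lem:517}, which exploit Assumption~\ref{ass:515}, while the discrete errors are controlled through Lemma~\ref{lem:56} combined with the improved first-term estimate \emph{(i)} of Lemma~\ref{lem:517} (in place of \emph{(i)} of Lemma~\ref{lem:511}); this substitution is precisely what upgrades the $h_{\bx}^{s_{\bx}}$ rate of Theorem~\ref{th:514} to $h_{\bx}^{s_{\bx}+1}$, since the two remaining contributions in Lemma~\ref{lem:56} already carry the factor $h_t^{s_t+1}$.

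For the velocity error~\eqref{eq:62} I would first observe that the discrete norm~$\|\cdot\|_{\mathcal{V}_{e,\bh}(Q_T)}$ in~\eqref{eq:40} contains the term~$\|V_{\bh}\|_{L^2_e(Q_T)}$ directly, so that
\[
\|\Pg\Pt V - V_{\bh}\|_{L^2_e(Q_T)} \le \|(\Pg\Pt U - U_{\bh},\, \Pg\Pt V - V_{\bh})\|_{\mathcal{V}_{e,\bh}(Q_T)}.
\]
Bounding the right-hand side by Lemma~\ref{lem:56} and estimating its three contributions with Lemma~\ref{lem:517}\emph{(i)} (for the first) and Lemma~\ref{lem:511}\emph{(ii)}--\emph{(iii)} (for the other two) yields exactly the $h_{\bx}^{s_{\bx}+1}$ and $h_t^{s_t+1}$ terms on the right of~\eqref{eq:62}. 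The projection error $\|V - \Pg\Pt V\|_{L^2_e(Q_T)}$ is then handled by Lemma~\ref{lem:517}\emph{(ii)}; since $V(0)=0$, the Poincar\'e inequality in time lets me replace $\|V\|_{L^2(0,T;H^{s_{\bx}+1}(\Omega))}$ by $\|\partial_t V\|_{L^2(0,T;H^{s_{\bx}+1}(\Omega))}$, matching the stated bound.

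The displacement error~\eqref{eq:61} is where the main obstacle appears: unlike the velocity component, the norm~$\|\cdot\|_{\mathcal{V}_{e,\bh}(Q_T)}$ does \emph{not} contain $\|U_{\bh}\|_{L^2_e(Q_T)}$, only $\|\partial_t U_{\bh}\|_{L^2_e(Q_T)}$ and $\|c\nabla_{\bx} U_{\bh}\|_{L^2_e(Q_T)}$, so the discrete error $\|\Pg\Pt U - U_{\bh}\|_{L^2_e(Q_T)}$ cannot be bounded directly by the $\mathcal{V}_{e,\bh}(Q_T)$ norm. The key step is to note that $E_U := \Pg\Pt U - U_{\bh}$ lies in $Q_{\bh}(Q_T)$ and satisfies $E_U(\cdot,0)=0$, because both $\Pg\Pt U$ and $U_{\bh}$ vanish at $t=0$ (indeed $S_{h_t}(0,T)\subset H^1_{0,\bullet}(0,T)$). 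Hence the Poincar\'e inequality in time gives
\[
\|\Pg\Pt U - U_{\bh}\|_{L^2_e(Q_T)} \apprle \|\partial_t(\Pg\Pt U - U_{\bh})\|_{L^2_e(Q_T)} \le \|(\Pg\Pt U - U_{\bh},\, \Pg\Pt V - V_{\bh})\|_{\mathcal{V}_{e,\bh}(Q_T)},
\]
which is again estimated through Lemma~\ref{lem:56} and Lemma~\ref{lem:517}\emph{(i)}. The projection error $\|U - \Pg\Pt U\|_{L^2_e(Q_T)}$ is estimated directly by Lemma~\ref{lem:517}\emph{(ii)}, producing the $h_t^{s_t+1}\|\partial_t^{s_t+1}U\|$ and $h_{\bx}^{s_{\bx}+1}\|U\|$ terms; collecting all contributions reproduces~\eqref{eq:61}.

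I expect the only genuinely delicate point to be this use of the Poincar\'e inequality in time for the $U$-component, together with the verification that $E_U$ vanishes at $t=0$, which is what allows the $L^2_e(Q_T)$ control of the displacement to be recovered from the time-derivative term already present in the energy norm. Once that is in place, the argument reduces to bookkeeping of the already-established projection and discrete-error estimates.
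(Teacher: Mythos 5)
Your proof is correct and follows essentially the same route as the paper: the same splitting into projection and discrete errors via $\Pg\Pt$, the same use of Lemma~\ref{lem:56} with the improved estimates of Lemma~\ref{lem:517} replacing \emph{(i)} and \emph{(v)} of Lemma~\ref{lem:511}, and the same application of the Poincar\'e inequality in time (valid since $\Pg\Pt U - U_{\bh}$ vanishes at $t=0$) to recover the $L^2_e(Q_T)$ control of the displacement from the $\partial_t$ term of the $\mathcal{V}_{e,\bh}(Q_T)$ norm. Your additional observation that a Poincar\'e inequality in time is also needed to turn $\|V\|_{L^2(0,T;H^{s_{\bx}+1}(\Omega))}$ into $\|\partial_t V\|_{L^2(0,T;H^{s_{\bx}+1}(\Omega))}$ in the projection term of~\eqref{eq:62} is a fair, and in fact slightly more careful, reading than the paper's remark that Poincar\'e is not needed for that estimate.
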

\begin{proof}
Using the triangle inequality, the Poincar\'e inequality in time, and the definition~\eqref{eq:40} of $\| \cdot \|_{\mathcal{V}_{e,\bh}(Q_T)}$, we obtain
\begin{align*}
    \| U - U_{\bh}\|_{L_e^2(Q_T)} & \le \| \Pg \Pt U - U_{\bh} \|_{L_e^2(Q_T)} + \| (\Pg \Pt - \Id) U\|_{L_e^2(Q_T)}
    \\ & \hspace{-2cm} \le \| (\Pg \Pt U- U_{\bh}, \Pg \Pt V - V_{\bh})\|_{\mathcal{V}_{e,\bh}(Q_T)} +  \| (\Pg \Pt - \Id) U\|_{L_e^2(Q_T)}.
\end{align*}
We derive~\eqref{eq:61} combining Lemma \ref{lem:56}, \emph{(i)}, \emph{(ii)} of Lemma \ref{lem:517}, and \emph{(ii)}, \emph{(iii)} of Lemma~\ref{lem:511}. Inequality \eqref{eq:62} is obtained similarly, except that the Poincar\'e inequality is not needed.
\end{proof}
\subsection{Numerical results}

In this section, we present two numerical tests in one space dimension to validate the stability and convergence of the proposed method.\footnote{All numerical test are performed with Matlab R2024a. The codes used for the numerical tests are available in the GitHub repository \cite{FerrariCodes2025}.}
The first test involves a smooth solution. We demonstrate unconditional stability and observe quasi-optimal convergence rates using standard space--time tensor product discretizations. In the second test, we consider a singular solution and still observe convergence, even though this case is not covered by our theory.

\noindent
An important aspect not addressed in this work is the efficient implementation of the space--time method, including its potential reformulation as a time-marching scheme; we refer to \cite[\S 5.1]{FerrariFraschiniLoliPerugia2024} for a detailed discussion.

\subsubsection{Smooth solution}

As a first numerical test, we consider problem \eqref{eq:1} in the domain $Q_T = \Omega \times (0,3)$, with~$\Omega = (0,1)$, $c^2(x) = x+1$, $U_0(x) = \sin(\pi x)$, $V_0(x)=0$, and $F(x,t)$ such that the exact solution is
\begin{equation} \label{eq:66}
    U(x,t) = \sin^2 \left( \frac{5}{4} \pi t \right) \sin(\pi x) + \sin(\pi x). \vspace{-0.2cm}
\end{equation}
\paragraph{Stability} We first demonstrate stability of the scheme in \eqref{eq:37} with respect to the norm $\| \cdot \|_{\mathcal{V}_{e,\bh}(Q_T)}$, as defined in \eqref{eq:40}, in line with the result of Corollary~\ref{cor:53}. The relative error in this norm is computed using first splines with maximal regularity and then $C^1$ splines in both time and space, all with the same polynomial degree $p$ but different mesh sizes. Specifically, we fix the time step $h_t$ and progressively refine the spatial mesh size $h_x$. As shown in Figure~\ref{fig:5}, where the results for several values of~$p$ are reported, no oscillations or instabilities are observed, and no CFL-type constraint of the form $h_t \le C h_x$ appears to be required. Here and below, we do not test with~$C^0$ polynomials in time as, in this case, our scheme does not differ significantly from that in~\cite{FrenchPeterson1996}.

\begin{figure}[h!]
\centering
\begin{minipage}{0.485\textwidth}
\includegraphics[width=\linewidth]{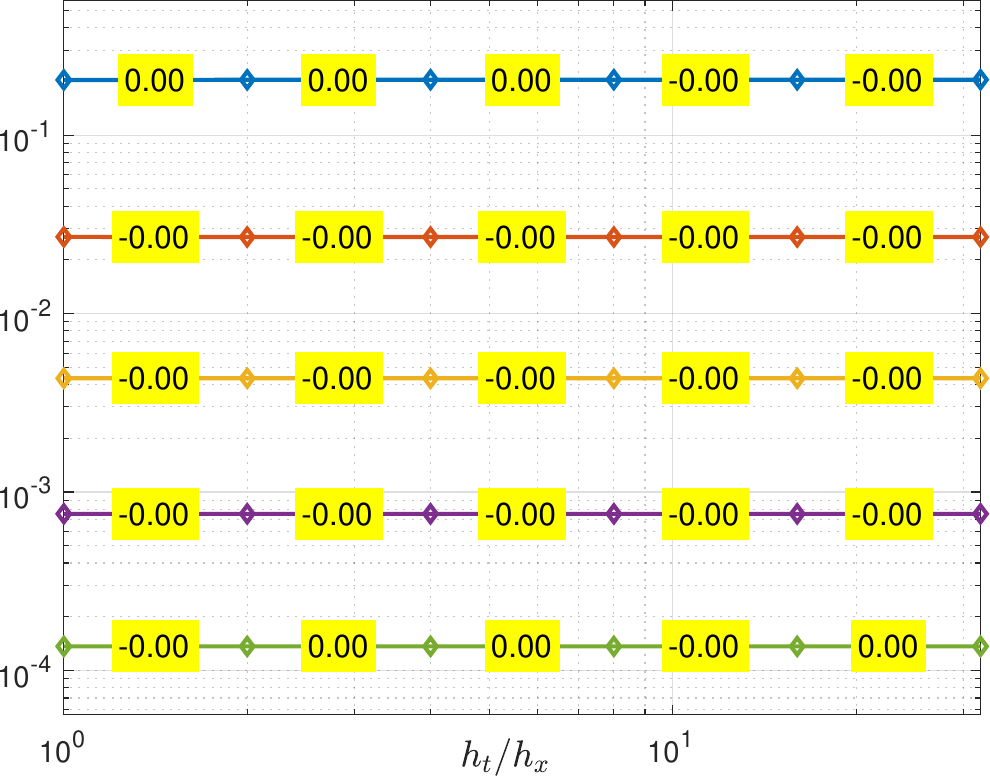}
\end{minipage}
\begin{minipage}{0.485\textwidth}
\includegraphics[width=\linewidth]{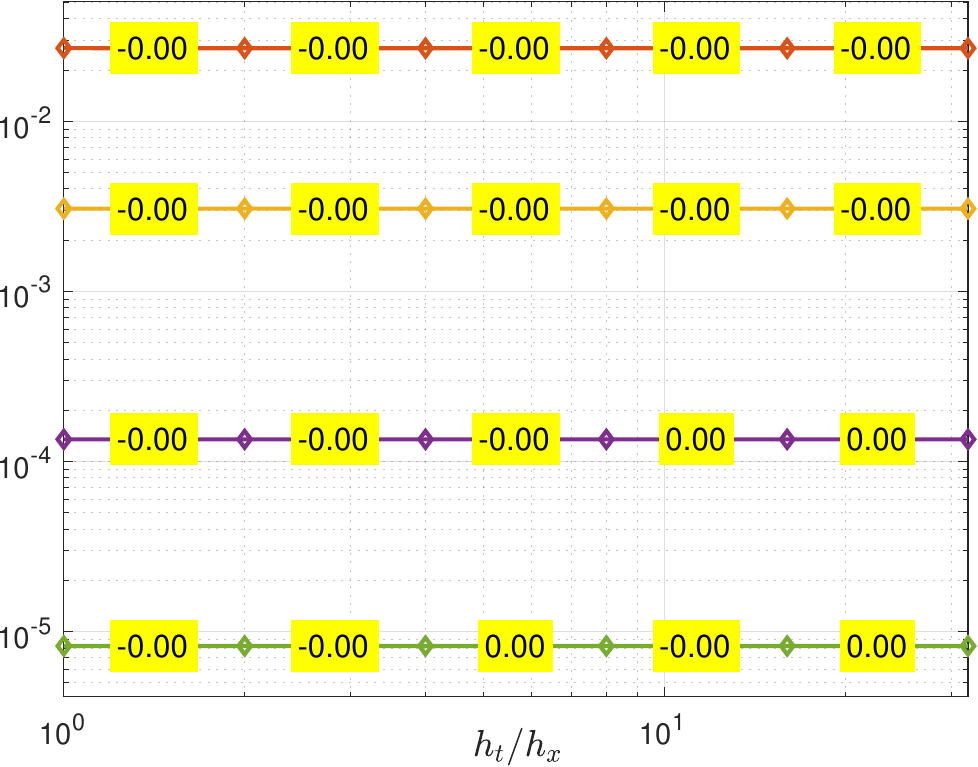}
\end{minipage}
\caption{{\small Test with the smooth solution given by~\eqref{eq:66}. Relative errors in the norm $\mathcal{V}_{e,\bh}(Q_T)$ for the discrete solution computed with the scheme in \eqref{eq:37}. The results are obtained by fixing the temporal mesh size $h_t$, decreasing the spatial mesh size $h_x$ and varying the polynomial degree $p$: \textcolor{myblue}{$p=1$}, \textcolor{myorange}{$p=2$}, \textcolor{myyellow}{$p=3$}, \textcolor{mypurple}{$p=4$}, \textcolor{mygreen}{$p=5$}. The left plot corresponds to maximal regularity splines, while the right plot shows results obtained using $C^1$ splines.}}
\label{fig:5}
\end{figure}

\paragraph{Convergence} To validate the convergence results established in Theorems \ref{th:514} and \ref{th:517}, we compute the relative errors in the $\mathcal{V}_{e,\bh}(Q_T)$ norm for $(U_{\bh}, V_{\bh})$, and in the $L^2(Q_T)$ norm for $U_{\bh}$. When using splines in both space and time with equal polynomial degree $p$ and uniform mesh size $h = h_t = h_x$, Theorem~\ref{th:514} predicts a convergence rate of order $\mathcal{O}(h^p)$ in the $\mathcal{V}_{e,\bh}(Q_T)$ norm. Furthermore, from Theorem~\ref{th:517}, we expect a convergence rate of order $\mathcal{O}(h^{p+1})$ in the $L^2$ norm. These theoretical rates are confirmed numerically in Figures~\ref{fig:3} and \ref{fig:4} for both maximal regularity and $C^1$-splines. As expected, for~$\|V-V_{\bh}\|_{L^2(Q_T)}$, we obtained the same convergence rates as for~$\|U-U_{\bh}\|_{L^2(Q_T)}$. We do not report these results here, for brevity.

\begin{figure}[h!]
\centering
\begin{minipage}{0.485\textwidth}
\includegraphics[width=\linewidth]{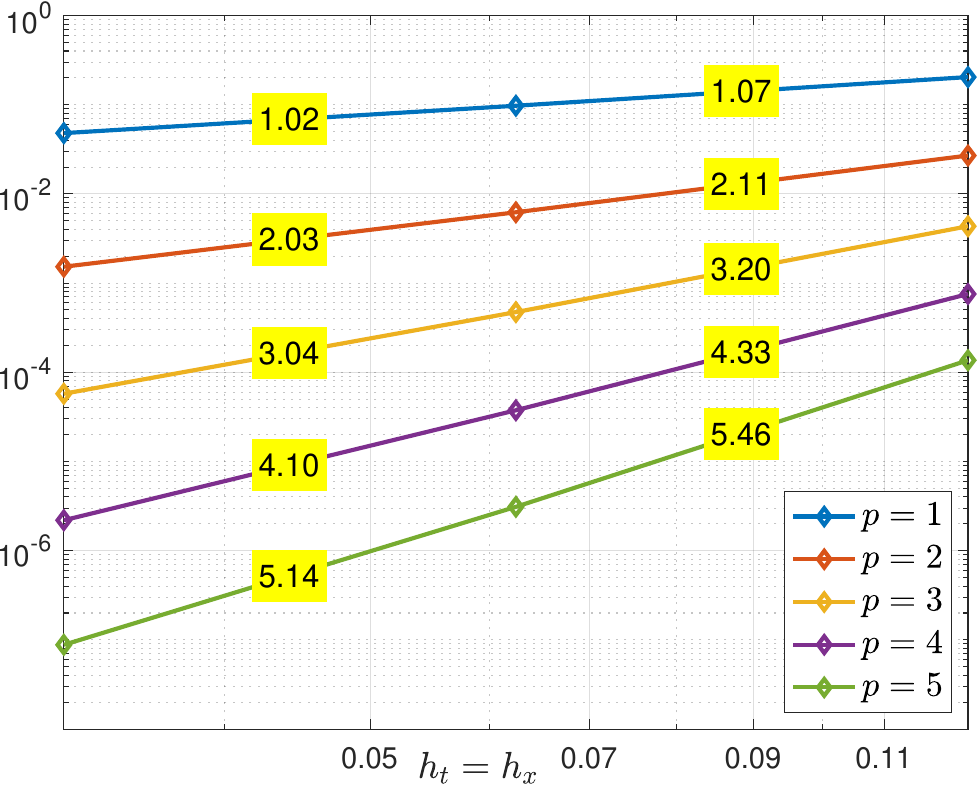}
\end{minipage}
\begin{minipage}{0.485\textwidth}
\includegraphics[width=\linewidth]{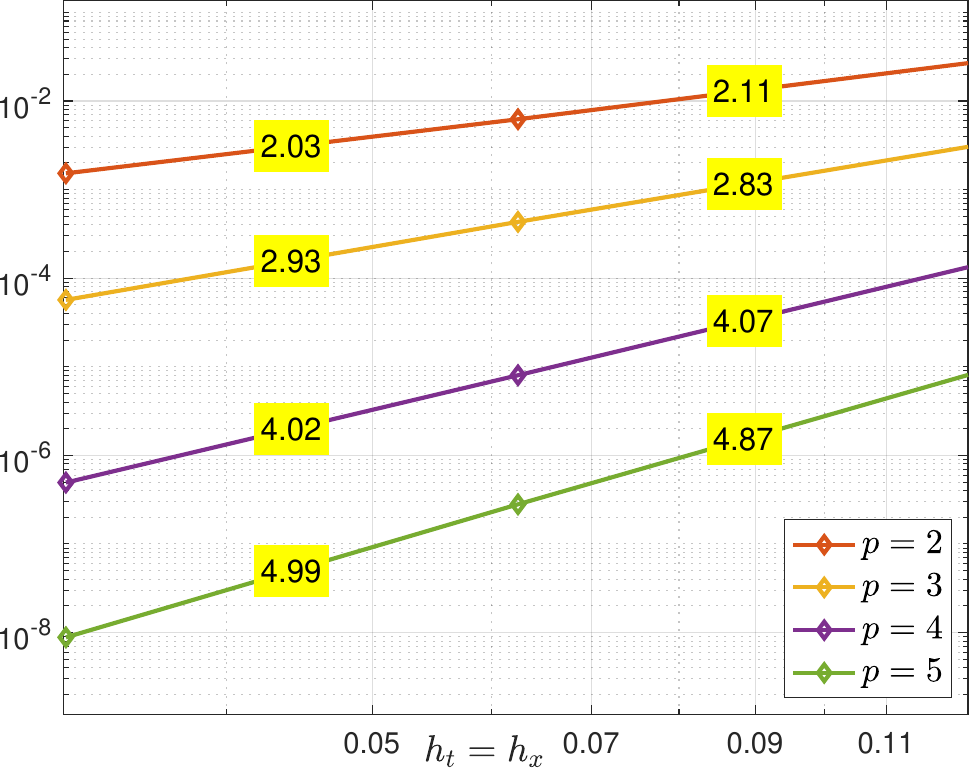}
\end{minipage}
\caption{{\small Test with the smooth solution given by~\eqref{eq:66}. Relative errors in the norm $\mathcal{V}_{e,\bh}(Q_T)$ for the discrete solution computed with the scheme in \eqref{eq:37}. The results are obtained by varying the spatial and temporal mesh sizes  $h_t=h_x$ and varying the polynomial degree~$p$. The left plot corresponds to maximal regularity splines, while the right plot shows results obtained using $C^1$-splines.}}
\label{fig:3}
\end{figure}

\begin{figure}[h!]
\centering
\begin{minipage}{0.485\textwidth}
\includegraphics[width=\linewidth]{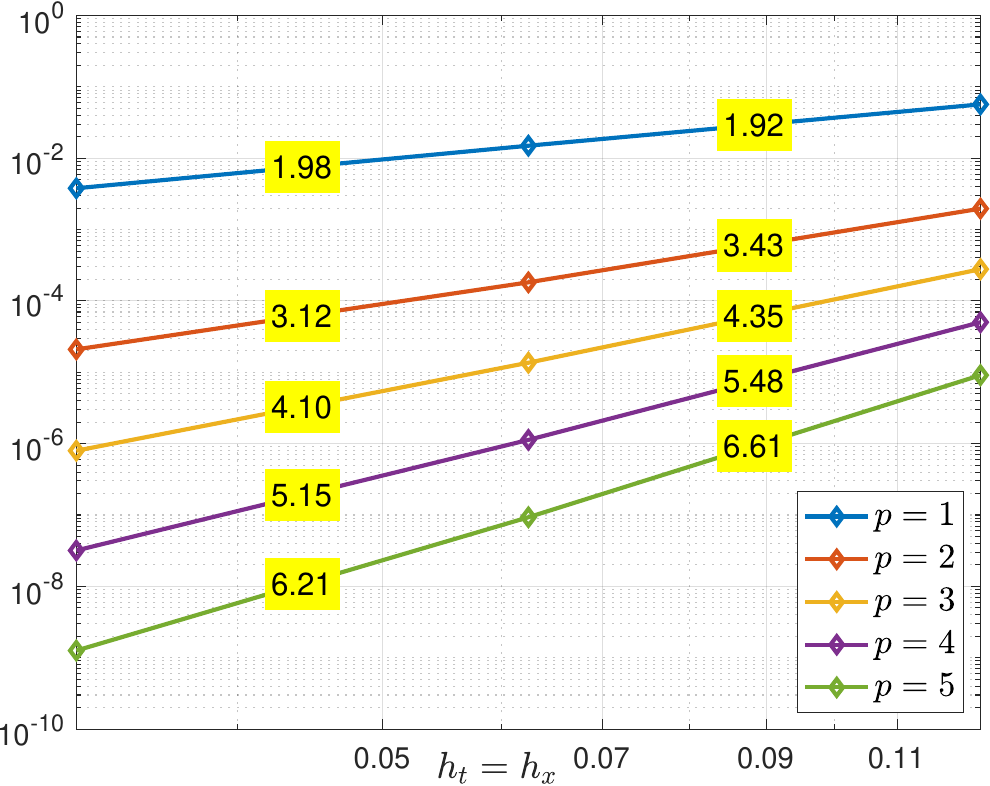}
\end{minipage}
\begin{minipage}{0.485\textwidth}
\includegraphics[width=\linewidth]{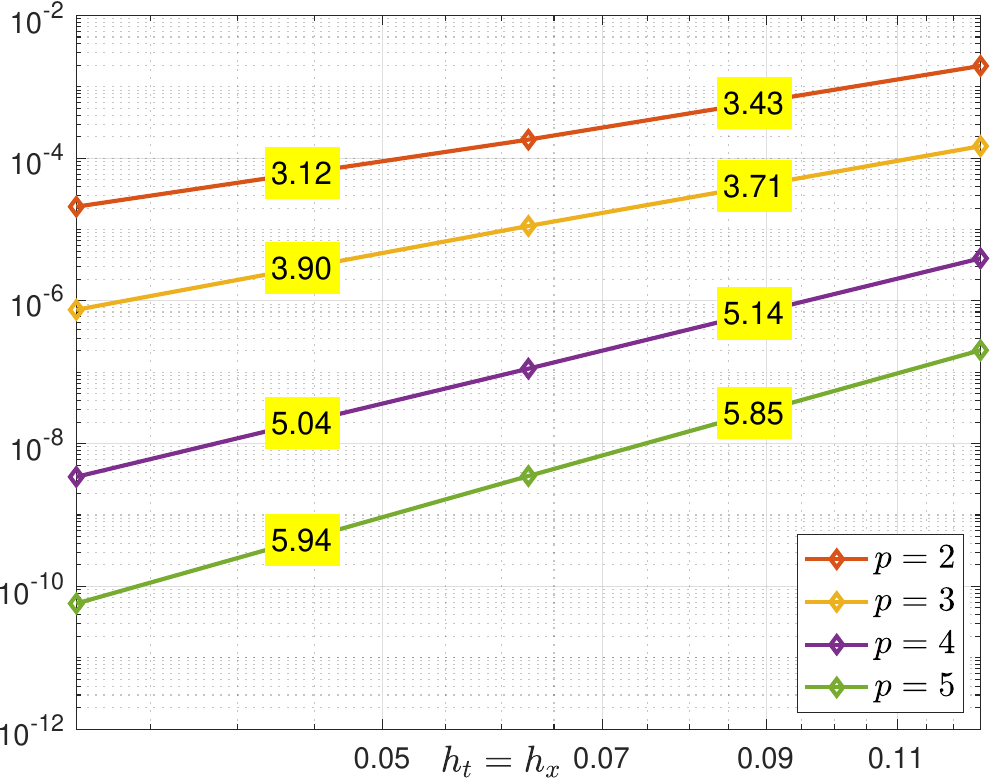}
\end{minipage}
\caption{{\small Test with the smooth solution given by~\eqref{eq:66}. Relative errors in the norm $L^2(Q_T)$ for $U_{\bh}$, the first component of the discrete solution computed with the scheme in \eqref{eq:37}. The results are obtained by varying the spatial and temporal mesh sizes  $h_t=h_x$ and varying the polynomial degree $p$. The left plot corresponds to maximal regularity splines, while the right plot shows results obtained using $C^1$-splines.}}
\label{fig:4}
\end{figure}
 
\subsubsection{Singular solution}
As a second test, we consider the initial value boundary problem \eqref{eq:1} with the non-smooth traveling wave solution of~\cite[\S 7, Problem 3]{BignardiMoiola2023}). The domain is given by $Q_T = \Omega \times (0,1)$, with~$\Omega = (-1.5,1.5)$, and we set $F(x,t)=0$, $c(x)=1$. The initial conditions are chosen so that the exact solution is 
\begin{equation} \label{eq:67}
    U(x,t) = \omega(x-t+1) \mathds{1}_{>0}(x-t+1),
\end{equation}
where $\mathds{1}_{>0}$ is the indicator function of $(0,\infty)$, and $\omega(s) := e^{-20(s-0.1)^2} - e^{-20(s+0.1)^2}$.

\noindent
For $t \in (0,1)$, the solution satisfies $|U(\pm 1.5,t)| \le 10^{-17}$, so we impose homogeneous boundary conditions in practice. The solution $U(x,t)$ is non-smooth along the line $x-~t+1=0$, due to a jump discontinuity in $\partial_t U$ across that line (since $\omega'(0)~=~8e^{-0.2} \ne 0$). Consequently, $U$ lies in~$H^{3/2-\varepsilon}(Q_T)$ for any $\varepsilon >0$, but not in~$H^{3/2}(Q_T)$. We test the convergence of the discrete scheme \eqref{eq:37} using uniform meshes and maximal regularity splines of degree $p$ in both space and time. Although the regularity assumptions on $V=\partial_t U$ required by Theorem \ref{th:514} are not satisfied, we observe quasi-optimal numerical convergence in practice: $\mathcal{O}(h^{3/2})$ for $U$ and $\mathcal{O}(h^{1/2})$ for $V$. These results are presented in Figure~\ref{fig:7}.
\begin{figure}[h!]
\centering
\begin{minipage}{0.485\textwidth}
\includegraphics[width=\linewidth]{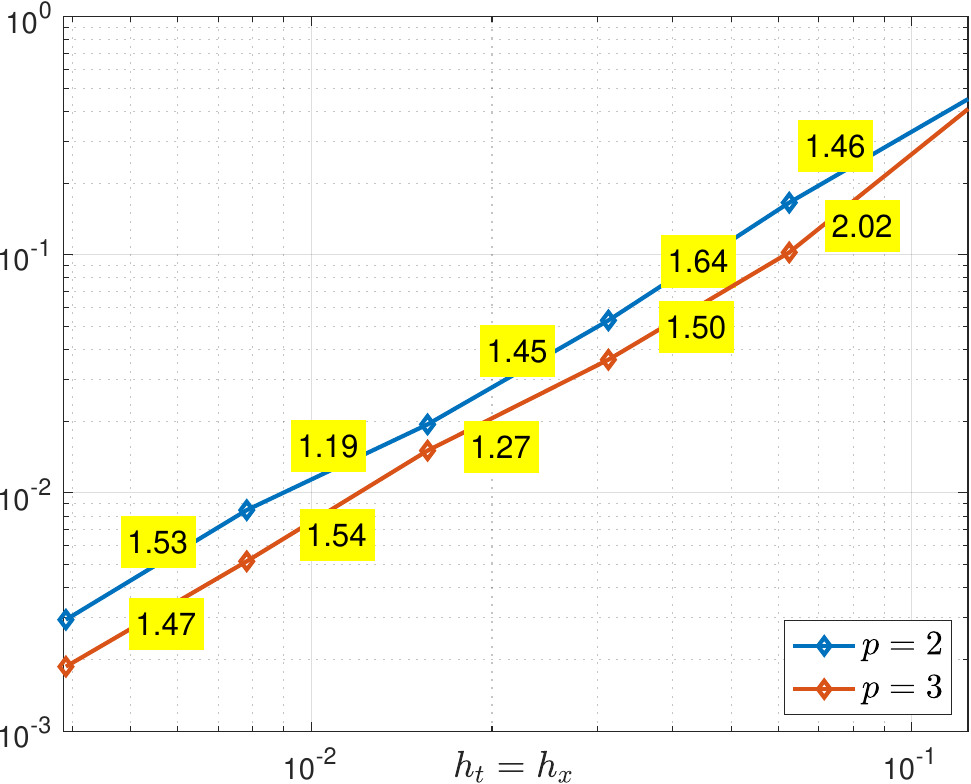}
\end{minipage}
\begin{minipage}{0.485\textwidth}
\includegraphics[width=\linewidth]{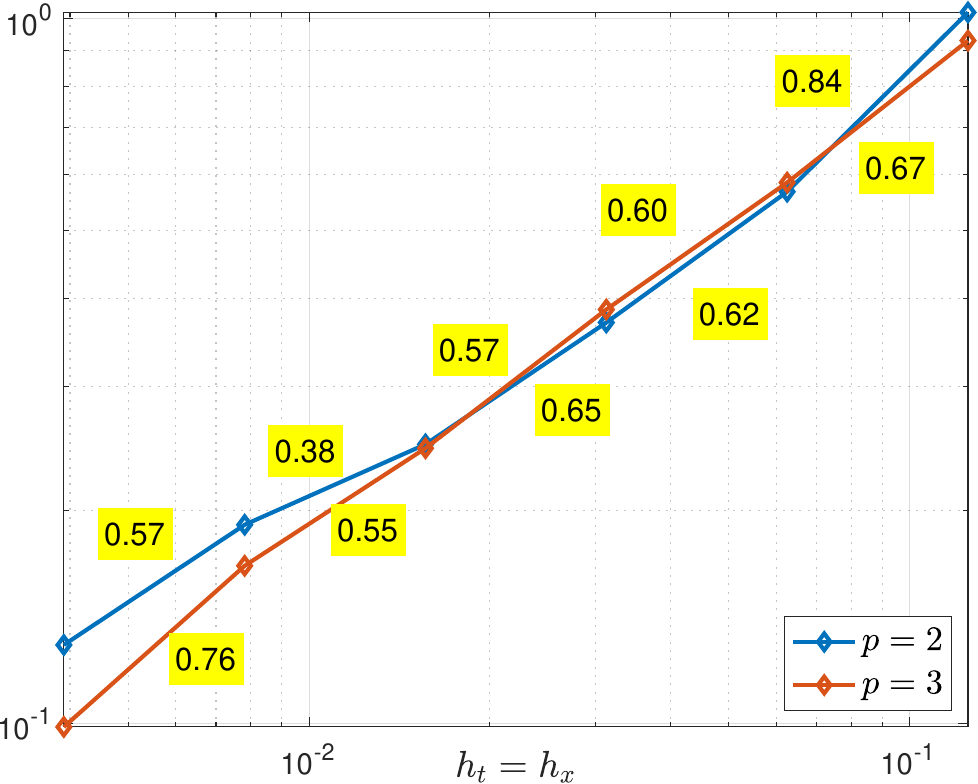}
\end{minipage}
\caption{{\small Test with the singular solution given by~\eqref{eq:67}. Relative errors in the norm $L^2(Q_T)$ for $U_{\bh}$ (left plot) and $V_{\bh}$ (right plot), the two components of the discrete solution with the scheme in \eqref{eq:37}. Maximal regularity splines are employed in both space and time. The results are obtained by varying the spatial and temporal mesh sizes $h_t=h_x$, and for polynomial degrees $p=2,3$.}}
\label{fig:7}
\end{figure}

\section{Conclusion}
In this work, we introduced a conforming space--time tensor product discretization method for the wave equation based on a first-order-in-time variational formulation with exponential weights. The proposed method achieves unconditional stability and quasi-optimal convergence for arbitrary choices of discrete spatial and temporal spaces, under standard approximation assumptions. Numerical tests validate the theoretical results. The key idea is to develop a proof of the inf-sup condition at the continuous level that can be extended directly to the discrete case. This approach allows us to prove well-posedness and convergence with considerable flexibility in the choice of discrete spaces. As in the second-order-in-time formulation of~\cite{FerrariPerugia2025}, exponential weights in time play a fundamental role in the theoretical analysis, particularly in establishing the inf-sup condition. However, from a practical point of view, they may be unnecessary. For instance, for the first-order-in-time formulation, they are proven not to be needed in the case of continuous piecewise polynomial approximations in time \cite{FrenchPeterson1996, Gomez2025}, or when using maximal regularity splines in time \cite{FerrariFraschiniLoliPerugia2024}. Whether this is true in general remains an open question.

\section*{Funding}
\noindent
This research was funded in part by the Austrian Science Fund (FWF) \href{https://doi.org/10.55776/F65}{10.55776/F65}. MF and EZ are members of the Gruppo Nazionale Calcolo Scientifico-Istituto Nazionale di Alta Matematica (GNCS-INdAM).

\bibliography{bibliography}{}
\bibliographystyle{plain}

\end{document}